\DeclareFontFamily{OMS}{rsfs}{\skewchar\font'60}
\DeclareFontShape{OMS}{rsfs}{m}{n}{<-5>rsfs5 <5-7>rsfs7 <7->rsfs10 }{}
\DeclareSymbolFont{rsfs}{OMS}{rsfs}{m}{n}
\DeclareSymbolFontAlphabet{\scr}{rsfs}
\DeclareSymbolFontAlphabet{\scr}{rsfs}
\newcommand\cE{{\mathcal E}}
\newcommand\cF{{\mathcal F}}
\newcommand\cG{{\mathcal G}}
\newcommand\cO{{\mathcal O}}
\newcommand\cQ{{\mathcal Q}}
\newcommand\bbC{{\mathbb C}}
\newcommand\bbN{{\mathbb N}}
\newcommand\bbP{{\mathbb P}}
\newcommand\bbZ{{\mathbb Z}}
\newcommand{\rk}{{\rm rk}}
\newcommand{\codim}{{\rm codim}}
\newcommand{\Bs}{{\rm Bs}}
\DeclareMathOperator*{\bs}{Bs}
\DeclareMathOperator*{\pic}{Pic}
\newcommand{\cHom}[2]{\ensuremath{\mathcal{H}om_{\mathcal{O}_X}(#1,#2)}}
\newtheorem{lemma1}{}[section]
\newenvironment{lemma}{\begin{lemma1}{\bf Lemma.}}{\end{lemma1}}
\newenvironment{example}{\begin{lemma1}{\bf Example.}\rm}{\end{lemma1}}
\newenvironment{thm}{\begin{lemma1}{\bf Theorem.}}{\end{lemma1}}
\newenvironment{prop}{\begin{lemma1}{\bf Proposition.}}{\end{lemma1}}
\newenvironment{cor}{\begin{lemma1}{\bf Corollary.}}{\end{lemma1}}
\newenvironment{remark}{\begin{lemma1}{\bf Remark.}\rm}{\end{lemma1}}
\newenvironment{defn}{\begin{lemma1}{\bf Definition.}}{\end{lemma1}}
\newenvironment{thm A}{{\bf Theorem A.}}{}
\newenvironment{thm B}{{\bf Theorem B.}}{}
\newenvironment{thm C}{{\bf Theorem C.}}{}
\newenvironment{thm D}{{\bf Theorem D.}}{}
\newenvironment{remark*}{{\bf Remark.}}{}
\newenvironment{example*}{{\bf Example.}}{}
\newenvironment{assumption*}{{\bf Assumption.}}{}
\setlist[itemize]{leftmargin=*}
\setlist[enumerate]{leftmargin=*}
\numberwithin{equation}{section} 
\title{Stability of tangent bundles of complete intersections and effective restriction} 
\date{\today}
\subjclass[2010]{14M10, 14J70, 32M15, 32M25, 32Q26}
\keywords{stability, tangent bundle, Lefschetz property, complete intersection, Hermitian symmetric space}
\author{Jie Liu}
\address{Jie Liu, Morningside Center of Mathematics, Academy of Mathematics and Systems Science, Chinese Academy of Sciences, Beijing, 100190, China}
\email{jliu@amss.ac.cn}
\begin{document}

\begin{abstract}
	For $n\geq 3$, let $M$ be an $(n+r)$-dimensional irreducible Hermitian symmetric space of compact type and let $\cO_M(1)$ be the ample generator of $\pic(M)$. Let $Y=H_1\cap\dots\cap H_r$ be a smooth complete intersection of dimension $n$ where $H_i\in\vert \cO_M(d_i)\vert$ with $d_i\geq 2$. We prove a vanishing theorem for twisted holomorphic forms on $Y$. As an application, we show that the tangent bundle $T_Y$ of $Y$ is stable. Moreover, if $X$ is a smooth hypersurface of degree $d$ in $Y$ such that the restriction $\pic(Y)\rightarrow \pic(X)$ is surjective, we establish some effective results for $d$ to guarantee the stability of the restriction $T_Y\vert_X$. In particular, if $Y$ is a general hypersurface in $\bbP^{n+1}$ and $X$ is general smooth divisor in $Y$, we show that $T_Y\vert_X$ is stable except for some well-known examples. We also address the cases where the Picard group increases by restriction.
\end{abstract}

\maketitle

\tableofcontents

\vspace{-0.2cm}

\section{Introduction}

It has been one of main problems in K{\"a}hler geometry to study  which Fano manifolds with $b_2=1$ admit a K{\"a}hler-Einstein metric. The celebrated Yau-Tian-Donaldson conjecture asserts that a Fano manifold admits a K{\"a}hler-Einstein metric if and only if it is $K$-polystable. This conjecture has been solved recently (see \cite{ChenDonaldsonSun2014,ChenDonaldsonSun2015,Tian2015} and the references therein). A weaker and more algebraic question related to the existence of K{\"a}hler-Einstein metrics asks whether the tangent bundle is (semi-)stable with respect to the anti-canonical divisor. Let us recall the definition of stability. Let $(Z,H)$ be an $n$-dimensional polarized projective manifold, and let $\cE$ be a non-zero torsion-free coherent sheaf over $Z$. The \emph{slope} of $\cE$ with respect to $H$ is defined to be
\[\mu(\cE)\colon=\frac{c_1(\cE)\cdot H^{n-1}}{\rk(\cE)}.\] 

\begin{defn}
	Let $(Z,H)$ be a polarized projective manifold, and let $\cE$ be a non-zero torsion-free coherent sheaf over $Z$. The sheaf $\cE$ is called $H$-stable (resp. $H$-semi-stable) if for any non-zero coherent subsheaf $\cF\subset \cE$ with $0<\rk(\cF)<\rk(\cE)$, we have
	\[\mu(\cF)<\mu(\cE)\qquad (resp.\quad \mu(\cF)\leq \mu(\cE)).\]
\end{defn}

By the works of Ramanan, Umemura, Azad-Biswas, Reid, Peternell-Wi{\'s}niewski and Hwang, the stability of tangent bundles is known for homogeneous spaces \cite{Ramanan1966,Umemura1978,AzadBiswas2010}, Fano manifolds with index one \cite{Reid1977}, Fano manifolds of dimension at most six \cite{PeternellWisniewski1995,Hwang1998}, general complete intersections in $\bbP^N$ \cite{PeternellWisniewski1995} and prime Fano manifolds with large index \cite{Hwang2001}. If $X$ is a projective manifold with $\pic(X)\cong\bbZ \cO_X(1)$, then the stability of $T_X$ is very much related to the cohomology vanishings of type
\[H^q(X,\Omega_X^p(\ell)),\]
where $\Omega_X^p(\ell)=\Omega_X^p\otimes\cO_X(\ell)$. If $M$ is an irreducible Hermitian symmetric space of compact type, then $M$ is a Fano manifold with Picard number one. In particular, making use of the work of Kostant \cite{Kostant1961}, Snow developed an algorithm in \cite{Snow1986} and \cite{Snow1988} to determine whether a given cohomology group $H^q(M,\Omega_M^p(\ell))$ vanishes. In particular, Biswas, Chaput and Mourougane proved in \cite{BiswasChaputMourougane2015} the following vanishing theorem by using the algorithm of Snow.

\begin{thm}\cite[Theorem D]{BiswasChaputMourougane2015}
	Let $M$ be an $n$-dimensional irreducible Hermitian symmetric space of compact type which is not isomorphic to a projective space. Let $\cO_M(1)$ be the ample generator of $\pic(M)$, and let $\ell$ and $p$ be two positive integers such that $H^q(M,\Omega_M^p(\ell))\not=0$ for some $q\geq 0$. Then we have
	\[\ell+q\geq p\frac{r_M}{n},\]
	where $r_M$ is the index of $M$, i.e., $\cO_M(-K_M)\cong\cO_M(r_M)$.
\end{thm}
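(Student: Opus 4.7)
The natural approach is to reduce the question to a Kostant--Borel--Weil--Bott computation and then perform the resulting combinatorial minimisation using Snow's algorithm.

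Write $M=G/P$, where $P$ is the cominuscule maximal parabolic associated to the simple root $\alpha$ dual to the fundamental weight $\omega$ giving $\cO_M(1)$. Because $P$ is cominuscule, the nilpotent radical of $P$ is abelian; as an $L$-module it identifies with the cotangent space $T_{M,o}^*$ at the base point and is irreducible. Thus $\wedge^p T_{M,o}^*$ decomposes into irreducible $L$-components whose highest weights $\lambda$ are described combinatorially: by pairs of partitions inside a rectangle for Grassmannians, by suitably marked strict partitions for the orthogonal and symplectic types, and by explicit finite lists for $E_6/P_1$ and $E_7/P_7$. Correspondingly, $\Omega_M^p(\ell)$ splits as a direct sum of the homogeneous bundles $E_{\lambda+\ell\omega}$.

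By Bott--Kostant, $H^q(M,E_\mu)\neq 0$ iff $\mu+\rho$ is $W$-regular, in which case the unique $w\in W^P$ sending $\mu+\rho$ into the dominant chamber has length $q$. Specialising to $\mu=\lambda+\ell\omega$ converts the non-vanishing question into a purely combinatorial enumeration, which Snow's algorithm packages by attaching to each irreducible Hermitian symmetric space a finite family of diagrams $d$, together with explicit formulas for $p=p(d)$, $q=q(d)$, $\ell=\ell(d)$. The theorem then reduces to the numerical inequality
\[\ell(d)+q(d)\;\geq\;p(d)\cdot\frac{r_M}{n}\]
for every admissible $d$, which I would verify type by type. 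The quadric $Q^n$ has $r_M/n=1$, and the inequality $\ell(d)+q(d)\geq p(d)$ is immediate from the hook-shape of its admissible diagrams; Grassmannians $G(k,N)$, of dimension $n=k(N-k)$ and index $r_M=N$, reduce to a short lattice-path count; the Lagrangian and spinor Grassmannians, together with $E_6/P_1$ and $E_7/P_7$, are done by the same scheme.

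The principal obstacle will be this extremal analysis in the spinor and exceptional cases, where $W^P$ is most intricate and the diagrams achieving (near-)equality must be located by hand. This is also where the hypothesis $M\ne\bbP^n$ enters the argument: for projective space the Euler sequence supplies diagrams that beat the uniform minimisation (for instance $H^0(\bbP^n,\Omega^n(n+1))\ne 0$ realises equality only when $p=n$), so $\bbP^n$ must be handled by the classical Bott computation in a separate step and is excluded here.
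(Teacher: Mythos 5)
First, note that the paper does not prove this statement at all: it is quoted verbatim as \cite[Theorem D]{BiswasChaputMourougane2015} and used as a black box, so there is no internal proof to compare against. Your reduction --- cominuscule parabolic, abelian nilradical, decomposition of $\wedge^p T_{M,o}^*$ into irreducible $L$-modules, Bott--Kostant, and then Snow's combinatorial reformulation --- is exactly the framework of the cited source (the paper's introduction says as much), so the strategy is the right one and matches the provenance of the result.

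The genuine gap is that the strategy is all you have written down: the theorem \emph{is} the inequality $\ell(d)+q(d)\geq p(d)\,r_M/n$ over all admissible diagrams in all six types, and you verify it in none of them. ``A short lattice-path count'' for Grassmannians and ``done by the same scheme'' for the Lagrangian, spinor and exceptional types is precisely the part of the argument that cannot be waved at --- compare Example \ref{Example C4} in the paper, where even a single cohomology group on $Sp(8)/U(4)$ requires an explicit admissible-sequence analysis. You also concede that the spinor and exceptional cases are ``the principal obstacle,'' which is an admission that the proof is not complete. Finally, your explanation of why $\bbP^n$ is excluded is off: the example $H^0(\bbP^n,\Omega_{\bbP^n}^n(n+1))\neq 0$ gives $\ell+q=n+1=p\cdot r_M/n$ for $p=n$, i.e.\ it \emph{satisfies} the inequality with equality rather than ``beating the minimisation''; indeed by Bott's formula (Theorem \ref{Bott Formula}) nonvanishing with $\ell,p>0$ forces $q=0$ and $\ell\geq p+1\geq p(n+1)/n$, so the inequality holds on $\bbP^n$ as well, and the exclusion in the statement is not justified by the reason you give. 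To turn this into a proof you would need to actually carry out the extremal analysis type by type, locating the minimising diagrams and checking the inequality against $(n,r_M)$ in each case.
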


 As an application of this vanishing theorem, we prove the following similar vanishing theorem for smooth complete intersections in $M$.
 
 \begin{thm}\label{Vanishing complete intersection}
 	For $n\geq 2$ and $r\geq 1$, let $M$ be an $(n+r)$-dimensional irreducible Hermitian symmetric space of compact type, and let $\cO_M(1)$ be the ample generator of $\pic(M)$. Let $Y=H_1\cap\dots\cap H_r$ be a smooth complete intersection of dimension $n$ where $H_i\in\vert \cO_M(d_i)\vert$ with $d_i\geq 2$. Denote $\cO_M(1)\vert_Y$ by $\cO_Y(1)$. Then for any $q\geq 0$, $p\geq 1$, $\ell\in\bbZ$ such that $q+p\leq n-1$, the following statements hold.
 	\begin{enumerate}
 		\item If $H^q(Y,\Omega_{Y}^p)\not=0$, then $p=q$.
 		
 		\item If $H^q(Y,\Omega_{Y}^p(\ell))\not=0$ for some $\ell\not=0$, then 
 		\[\ell+q>p\frac{r_Y}{n},\]
 		where $r_Y=r_M-d_1-\dots-d_r$.
 	\end{enumerate}
 \end{thm}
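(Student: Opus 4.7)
The plan is induction on $r$, with the Biswas--Chaput--Mourougane vanishing (which remains valid for $M = \bbP^{n+r}$ via Bott's formula) as the $r = 0$ base. Write $Y_k = H_1 \cap \cdots \cap H_k$ for $0 \leq k \leq r$, so $Y_0 = M$, $Y_r = Y$, $\dim Y_k = n + r - k$, and $r_{Y_k} = r_M - \sum_{i = 1}^k d_i$. Each inductive step uses the restriction sequence
\[
0 \to \Omega_{Y_{k-1}}^p(\ell - d_k) \to \Omega_{Y_{k-1}}^p(\ell) \to \Omega_{Y_{k-1}}^p|_{Y_k}(\ell) \to 0
\]
on $Y_{k-1}$ together with the $p$th exterior power of the conormal sequence $0 \to \cO_{Y_k}(-d_k) \to \Omega_{Y_{k-1}}|_{Y_k} \to \Omega_{Y_k} \to 0$,
\[
0 \to \Omega_{Y_k}^{p-1}(\ell - d_k) \to \Omega_{Y_{k-1}}^p|_{Y_k}(\ell) \to \Omega_{Y_k}^p(\ell) \to 0.
\]
Chasing the resulting long exact sequences shows that $H^q(Y_k, \Omega_{Y_k}^p(\ell)) \neq 0$ forces at least one of
\[
H^q(Y_{k-1}, \Omega_{Y_{k-1}}^p(\ell)),\qquad H^{q+1}(Y_{k-1}, \Omega_{Y_{k-1}}^p(\ell - d_k)),\qquad H^{q+1}(Y_k, \Omega_{Y_k}^{p-1}(\ell - d_k))
\]
to be nonzero; the first two are controlled by the outer induction on $k$, and the third by a sub-induction on $p$ on $Y_k$ itself.

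The base of the $p$-sub-induction is $p = 0$, which requires $H^q(Y_k, \cO_{Y_k}(\ell)) = 0$ for $1 \leq q \leq \dim Y_k - 1$ and any $\ell \in \bbZ$; this follows from the Koszul resolution of $\cO_{Y_k}$ on $M$ combined with the Borel--Weil--Bott vanishing $H^q(M, \cO_M(a)) = 0$ for $0 < q < \dim M$ and all $a$. The statement threaded through the joint induction is a single packaging of parts~(1) and~(2): for $p \geq 1$, $q \geq 0$, $p + q \leq \dim Y_k - 1$, and $\ell \in \bbZ$, either ($\ell = 0$ and $p = q$) or $\ell + q > p\, r_{Y_k}/\dim Y_k$ (strictly); at $k = 0$ the strict inequality is replaced by the weak BCM one, and each subsequent step is designed both to propagate the statement and to upgrade $\geq$ to $>$.

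The strictness is driven by the two elementary facts that $d_k \geq 2$ for all $k$ and $r_{Y_k} \leq \dim Y_k$ for $k \geq 1$ (the latter following from the uniform bound $r_M \leq \dim M + 1$ available on every irreducible Hermitian symmetric space of compact type), so that the arithmetic identity
\[
\frac{r_{Y_{k-1}}}{\dim Y_{k-1}} - \frac{r_{Y_k}}{\dim Y_k} \;=\; \frac{d_k \dim Y_k - r_{Y_k}}{\dim Y_{k-1}\, \dim Y_k}
\]
together with the additive slack of $d_k - 1 \geq 1$ coming from the $-d_k$ twist in the second clause is enough to push the bound from $p\, r_{Y_{k-1}}/\dim Y_{k-1}$ on $Y_{k-1}$ past $p\, r_{Y_k}/\dim Y_k$ on $Y_k$, strictly. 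The main obstacle I expect is the Hodge edge case arising from the third clause: when $\ell = d_k$ and $p = q + 2$, the sub-induction lands on $H^{q+1}(Y_k, \Omega_{Y_k}^{q+1}) \neq 0$, which is automatic by Hodge theory and carries no quantitative content. In that regime one has to go back to the long exact sequences and verify, via the injectivity/surjectivity pattern of the connecting maps, that the non-vanishing of $H^q(Y_k, \Omega_{Y_k}^p(\ell))$ must actually be witnessed by one of the first two clauses, and then use those to supply the strict inequality. This case analysis, especially in the extremal setting $M = \bbP^{n+r}$ with all $d_i = 2$ where $r_{Y_k}/\dim Y_k = 1$ on the nose, is the most delicate part of the bookkeeping.
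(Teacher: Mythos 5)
Your overall architecture is close in spirit to the paper's: the paper also climbs the tower $M=Y_0\supset Y_1\supset\dots\supset Y_r=Y$ through the conormal sequences (its Lemma \ref{cohomology hypersurface to ambiant space}), but it then pushes everything back to the ambient space $M$ via the Koszul resolution and a second-quadrant spectral sequence, so that the quantitative input (BCM's Theorem D, plus Akizuki--Nakano) is invoked only once, on $M$ itself; the case $M\cong\bbP^{n+r}$ is disposed of wholesale at the outset by Naruki's vanishing for complete intersections in projective space. Your version instead demands the strict inequality $\ell+q>p\,r_{Y_k}/\dim Y_k$ as an inductive hypothesis at every intermediate stage $Y_k$, and this is exactly where a genuine gap opens up.

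The gap is the edge case you yourself flag, and your proposed repair does not close it. Concretely: at $k=1$ with $M=\bbP^{n+r}$ and $d_1=2$, so $Y_1=Q^{N}$ with $r_{Y_1}=\dim Y_1=N$, the third clause with $\ell=2m$, $p=q+2m$ lands on $H^{q+m}(Y_1,\Omega_{Y_1}^{q+m})\ne 0$, which is true by Hodge theory and yields only $\ell+q\geq p=p\,r_{Y_1}/\dim Y_1$ --- not the strict inequality your induction must propagate. A pure chase of the two long exact sequences cannot rescue this: the relevant boundary map embeds $H^{q}(Q^N,\Omega^{q+2}_{Q^N}(2))$ into the one-dimensional space $H^{q+1}(Q^N,\Omega^{q+1}_{Q^N})$, and all three Bott-computable terms surrounding it vanish, so formally you only learn $h^{q}(Q^N,\Omega^{q+2}_{Q^N}(2))\leq 1$; showing it is $0$ requires proving that the next connecting map $H^{q+1}(Q^N,\Omega^{q+1}_{Q^N})\to H^{q+1}(Q^N,\Omega^{q+2}_{\bbP^{N+1}}(2)\vert_{Q^N})\cong H^{q+2}(\bbP^{N+1},\Omega^{q+2}_{\bbP^{N+1}})$ is nonzero, which is not an ``injectivity/surjectivity pattern'' visible from the exact sequences alone. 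The fix is cheap but must be made explicit: either quote Snow's computation for quadrics (Theorem \ref{Quadric Cohomologies}(1) gives $H^q(Q^N,\Omega^p_{Q^N}(\ell))=0$ for $0<\ell\leq p$ unless $\ell=-N+2p$, and $\ell=-N+2p$ forces $p+q=N$, which is outside your range $p+q\leq N-1$), or, as the paper does, remove $M\cong\bbP^{n+r}$ from the induction entirely via Naruki. With that one input supplied, your bookkeeping (the identity for $r_{Y_{k-1}}/\dim Y_{k-1}-r_{Y_k}/\dim Y_k$, the slack $d_k-1\geq 1\geq r_{Y_k}/\dim Y_k$ for $k\geq 1$, Akizuki--Nakano to kill all negatively twisted terms in the range $p+q<\dim$, and the Koszul argument for the $p=0$ base) does go through.
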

 
 In Theorem \ref{Vanishing complete intersection}, if $n\geq 3$, the natural map $\pic(M)\rightarrow\pic(Y)$ is an isomorphism by Lefschetz hyperplane theorem \cite[Corollary 3.3]{Hartshorne1970}. In particular, we have $\pic(Y)\cong \bbZ\cO_Y(1)=\bbZ\cO_M(1)\vert_Y$. As the first application of our vanishing theorem, we generalize a result of Peternell-Wi{\'s}niewski (see \cite[Theorem 1.5]{PeternellWisniewski1995}) in the following theorem.

\begin{thm}\label{stability hypersurfaces of Hermitian symmetric spaces}
	For $n\geq 3$ and $r\geq 1$, let $M$ be an $(n+r)$-dimensional irreducible Hermitian symmetric space of compact type, and let $\cO_M(1)$ be the ample generator of $\pic(M)$. If $Y=H_1\cap\dots\cap H_r$ is a smooth complete intersection of dimension $n$ where $H_i\in\vert\cO_M(d_i)\vert$ with $d_i\geq 2$, then the tangent bundle $T_Y$ is stable.
\end{thm}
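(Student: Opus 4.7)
\medskip

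\noindent\textbf{Proof proposal.}
The plan is to reduce the stability of $T_Y$ to the vanishing statement already established in Theorem \ref{Vanishing complete intersection}. Since $n\geq 3$, the Lefschetz hyperplane theorem gives $\pic(Y)\cong\bbZ\cO_Y(1)$, so it suffices to rule out, for each $s$ with $1\leq s\leq n-1$, the existence of a saturated subsheaf $\cF\subset T_Y$ of rank $s$ with $\det\cF=\cO_Y(a)$ and $a/s\geq r_Y/n$.

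First, given such an $\cF$, take exterior powers: the inclusion $\cF\hookrightarrow T_Y$ induces a nonzero map $\det\cF\to\Lambda^s T_Y$. Using $\Lambda^s T_Y\cong \Omega_Y^{n-s}\otimes\det T_Y\cong \Omega_Y^{n-s}(r_Y)$, this translates into a nonzero section
\[
H^0\bigl(Y,\Omega_Y^{n-s}(r_Y-a)\bigr)\neq 0.
\]
Next, I apply Theorem \ref{Vanishing complete intersection} with $q=0$, $p=n-s$, $\ell=r_Y-a$; the hypothesis $q+p\leq n-1$ holds because $s\geq 1$. If $\ell=0$, part (1) forces $p=q$, i.e.\ $s=n$, contradicting $s<n$. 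If $\ell\neq 0$, part (2) gives
\[
r_Y-a>(n-s)\,\frac{r_Y}{n},
\]
equivalently $a/s<r_Y/n$, which is exactly $\mu(\cF)<\mu(T_Y)$ after multiplying both sides by $\cO_Y(1)^n>0$.

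There is essentially no obstacle beyond correctly packaging the vanishing theorem: the content of the argument is entirely carried by Theorem \ref{Vanishing complete intersection}. The only mild subtlety is the case $\ell<0$, but the inequality in part (2) cannot hold when $\ell<0\leq p\,r_Y/n$, so this case simply does not arise (the corresponding $H^0$ vanishes automatically). Once these book-keeping cases are handled, the conclusion follows immediately: every rank-$s$ subsheaf with $1\leq s\leq n-1$ has strictly smaller slope than $T_Y$, so $T_Y$ is $\cO_Y(1)$-stable.
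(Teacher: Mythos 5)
Your proposal is correct and is essentially the paper's own argument in dual form: the paper takes a rank-$p$ subsheaf $\cF\subset\Omega_Y^1$ with $\det\cF=\cO_Y(-\ell)$, deduces $H^0(Y,\Omega_Y^p(\ell))\neq 0$, and applies Theorem \ref{Vanishing complete intersection}, while you take a rank-$s$ subsheaf of $T_Y$ and land on $H^0(Y,\Omega_Y^{n-s}(r_Y-a))\neq 0$, which is the same statement under $p=n-s$, $\ell=r_Y-a$. The bookkeeping (handling $\ell=0$ via part (1), $\ell\neq 0$ via part (2)) matches the paper's, so nothing further is needed.
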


Let $(Z,H)$ be a polarized projective manifold, and let $Y\in \vert dH\vert$ be a general smooth hypersurface of degree $d$. Let $\cE$ be a torsion-free coherent sheaf over $Z$. Then it is easy to see that, if $\cE$ is an $H$-unstable sheaf, then $\cE\vert_Y$ is $H\vert_Y$-unstable. Equivalently, $\cE$ is semi-stable if $\cE\vert_Y$ is $H\vert_Y$-semi-stable. Nevertheless, the converse is false in general as shown by the following easy counterexample.

\begin{example}
	The tangent bundle $T_{\bbP^n}$ of $\bbP^n$ is stable with $\mu(T_{\bbP^{n}})=(n+1)/n$. However, if $Y$ is a hyperplane, then the restriction $T_{\bbP^n}\vert_Y$ is unstable since $T_Y$ is a subbundle of $T_{\bbP^n}\vert_Y$ with $\mu(T_Y)=n/(n-1)$.
\end{example}

However, by a result of Mehta-Ramanathan, the restriction of a (semi-)stable sheaf is (semi-)stable for sufficiently large $d$. In general, we have the following important effective restriction theorem (cf. \cite{Flenner1984,MehtaRamanathan1984,Langer2004}).

\begin{thm}\cite[Theorem 5.2 and Corollary 5.4]{Langer2004}
	Let $(Z,H)$ be a polarized projective manifold of dimension $n$. Let $\cE$ be a torsion-free $H$-(semi-)stable sheaf of rank $p\geq 2$. Let $Y\in\vert dH\vert$ be a general smooth hypersurface. If
	\[d>\frac{p-1}{p}\Delta(\cE)H^{n-2}+\frac{1}{p(p-1)H^n},\]
	then $\cE\vert_Y$ is $H\vert_Y$-(semi-)stable. Here $\Delta(\cE)=2p c_2(\cE)-(p-1)c^2_1(\cE)$ is the discriminant of $\cE$.
\end{thm}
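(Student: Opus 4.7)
The plan is to proceed by contrapositive: assume $\cE$ is $H$-semistable but that $\cE|_Y$ fails to be $H|_Y$-semistable for a general $Y\in |dH|$, and derive an upper bound on $d$ of exactly the shape stated. Let
\[
0=\cE_0\subset\cE_1\subset\cdots\subset\cE_s=\cE|_Y
\]
be the Harder--Narasimhan filtration of $\cE|_Y$, with factors of rank $r_i$ and slope $\mu_i$, so that $s\ge 2$ and $\mu_1>\mu_2>\cdots>\mu_s$. I would start by lifting each $\cE_i$ to a subsheaf $\cG_i\subset\cE$ on $Z$ via $\cG_i:=\ker\bigl(\cE\twoheadrightarrow\cE|_Y/\cE_i\bigr)$. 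This gives a filtration of $\cE$ on $Z$ whose restriction to $Y$ recovers $\cE_\bullet$, and the $H$-semistability of $\cE$ forces $\mu_H(\cG_i)\le\mu_H(\cE)$ for every $i$. Via a direct Chern-class computation, these inequalities on $Z$ translate into linear relations between the $\mu_i$, the slope $\mu(\cE)$, and the degree $d$.

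The heart of the argument is a Bogomolov-type inequality of Langer comparing the discriminant of $\cE$ to the slope gaps of its restriction. Concretely, applying the classical Bogomolov inequality $\Delta(\cG_i/\cG_{i-1})\cdot H^{n-2}\ge 0$ to each (generically) semistable graded piece and summing, one exploits the quadratic expansion
\[
\Delta(\cE)\cdot H^{n-2}\;\geq\;\sum_{i=1}^s \Delta(\cG_i/\cG_{i-1})\cdot H^{n-2}\;+\;\frac{1}{p\cdot d\,H^n}\sum_{1\le i<j\le s} r_i r_j(\mu_i-\mu_j)^2,
\]
which isolates the quadratic form in the slope differences on the right. Dropping the nonnegative discriminant terms yields an estimate of $\sum r_i r_j(\mu_i-\mu_j)^2$ by $p\,d\,H^n\Delta(\cE)H^{n-2}$.

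To close the loop, I would combine this with the elementary fact that, since each $\mu_i$ is a rational number with denominator dividing $r_i\cdot dH^n$, any strict inequality $\mu_i>\mu_j$ forces $\mu_i-\mu_j\ge 1/(r_i r_j\cdot dH^n)$, and in particular $\mu_1-\mu_s\ge 1/(p(p-1)\,dH^n)$. Plugging this lower bound into the quadratic estimate above gives
\[
d\;\le\;\frac{p-1}{p}\,\Delta(\cE)\,H^{n-2}+\frac{1}{p(p-1)H^n},
\]
contradicting the hypothesis. The stable case is handled the same way with strict inequalities, replacing the HN filtration by a maximal destabilizing subsheaf of $\cE|_Y$ that achieves $\mu\ge\mu(\cE|_Y)$.

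The main obstacle is the quantitative Bogomolov-type inequality in the middle paragraph: getting the correct dependence on $d$ in the coefficient $1/(p\,dH^n)$ is precisely the technical innovation of Langer over the earlier approaches of Flenner and Mehta--Ramanathan, and requires a careful simultaneous control of all the graded pieces (in positive characteristic via Frobenius pullbacks, in characteristic zero via a reduction argument). Once this inequality is in hand, the remaining steps — lifting, slope bookkeeping, and the denominator estimate — are essentially formal.
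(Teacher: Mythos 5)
This statement is not proved in the paper at all: it is quoted verbatim from Langer (Theorem~5.2 and Corollary~5.4 of the cited work) as background, and is never actually invoked in the paper's own arguments, which obtain effective restriction by the cohomological route of Proposition \ref{reduce to exitence of vector fields} instead. So there is no internal proof to compare against, and your proposal must stand on its own as a reconstruction of Langer's argument.

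Judged that way, the outer skeleton is correct (Harder--Narasimhan filtration of $\cE\vert_Y$, lifting each $\cE_i$ to $\cG_i=\ker(\cE\to(\cE\vert_Y)/\cE_i)$, a discriminant inequality, and the integrality bound $\mu_1-\mu_s\geq 1/(r_1r_s)\geq 1/(p(p-1))$, which is fine since $r_1r_s\leq p^2/4\leq p(p-1)$). But the central display, which is where the whole proof lives, does not work as stated. The graded pieces $\cG_i/\cG_{i-1}$ of the lifted filtration are isomorphic to $\cE_i/\cE_{i-1}$, hence are torsion sheaves on $Z$ supported on $Y$: the classical Bogomolov inequality cannot be applied to them, and the standard discriminant decomposition for a filtration with torsion-free quotients carries a \emph{minus} sign in front of the quadratic form $\sum_{i<j}r_ir_j\xi_{ij}^2$, so combining it with the Hodge index theorem pushes the quadratic term to the wrong side of the inequality; your positive term with coefficient $1/(p\,d\,H^n)$ does not come out of the argument you describe. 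Langer's actual proof applies a generalized Bogomolov-type inequality (valid for arbitrary torsion-free sheaves, with a correction term of the form $p^2(\mu_{\max}-\mu)(\mu-\mu_{\min})$) to the full-rank subsheaves $\cG_i\subset\cE$ themselves, using the semistability of $\cE$ to control $\mu_{\max}(\cG_i)$ and the Harder--Narasimhan quotients on $Y$ to control $\mu_{\min}(\cG_i)$, with the factor $d$ entering through $c_1(\cG_i)=c_1(\cE)-(p-\operatorname{rk}\cE_i)\,dH$ and the Chern-class computation across the pushforward from $Y$. Since you explicitly defer this entire step to ``the technical innovation of Langer,'' the proposal leaves the essential inequality both unproved and, as written, incorrectly formulated; this is a genuine gap rather than a routine omission.
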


In \cite{BiswasChaputMourougane2015}, an optimal effective theorem is established for tangent bundles of irreducible Hermitian symmetric spaces of compact type. 

\begin{thm}\cite[Theorem A and B]{BiswasChaputMourougane2015}\label{BCM Theorem A}
	For $n\geq 3$, let $M$ be an $n$-dimensional irreducible Hermitian symmetric space of compact type, and let $Y$ be a smooth hypersurface of $M$. Then the restriction $T_M\vert_Y$ is stable unless $Y$ is a linear section and $M$ is isomorphic to either $\bbP^n$ or a smooth quadric hypersurface $Q^n$.
\end{thm}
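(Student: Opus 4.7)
My plan for Theorem~\ref{BCM Theorem A} is to argue by contradiction, converting the existence of a destabilizing subsheaf of $T_M\vert_Y$ into a non-vanishing statement about twisted holomorphic forms on $Y$, and then contradicting it via the Biswas--Chaput--Mourougane vanishing on $M$ (Theorem~D in the excerpt) propagated through the restriction sequence.

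Suppose $\cF\subset T_M\vert_Y$ is a saturated subsheaf with $1\leq \rk\cF=p\leq n-1$ violating stability. For $n\geq 4$ the Lefschetz hyperplane theorem gives $\pic(Y)=\bbZ\cO_Y(1)$, so $\det\cF=\cO_Y(k)$ with $nk\geq p\,r_M$; the leftover case $n=3$ restricts $M$ to $\bbP^3$ or $Q^3$ and is handled directly. Passing to $p$-th exterior powers and using the identification
\[\wedge^p T_M\cong \Omega_M^{n-p}\otimes\omega_M^{-1}\cong \Omega_M^{n-p}(r_M),\]
the inclusion $\cO_Y(k)=\det\cF\hookrightarrow\wedge^p T_M\vert_Y$ yields
\[H^0(Y,\Omega_M^{n-p}\vert_Y(r_M-k))\neq 0,\]
and the entire game is to force this group to vanish.

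To do so, I would twist the ideal sheaf sequence $0\to\cO_M(-d)\to\cO_M\to\cO_Y\to 0$ by $\Omega_M^{n-p}(r_M-k)$ and read off the three-term sequence
\[H^0(M,\Omega_M^{n-p}(r_M-k))\longrightarrow H^0(Y,\Omega_M^{n-p}\vert_Y(r_M-k))\longrightarrow H^1(M,\Omega_M^{n-p}(r_M-k-d)).\]
Theorem~D applied to the leftmost term forces it to vanish as soon as $nk>p\,r_M$, which handles the strictly destabilizing case. For the rightmost term I would apply the same theorem when $r_M-k-d>0$, and fall back to Akizuki--Kodaira--Nakano (or Bott vanishing on $M$) when the twist $r_M-k-d$ is non-positive and $p\geq 2$; the rank-one case $p=1$ must be treated by a separate argument directly on $T_M\vert_Y$.

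The real difficulty, and the main obstacle, is the boundary analysis: the BCM inequality is achieved with equality precisely in a handful of arithmetic configurations, namely $nk=p\,r_M$ or tight $H^1$-bound, which typically forces $d=1$ and a very restricted $M$. At this point one has to traverse the full list of irreducible HSS of compact type---projective spaces, smooth quadrics, ordinary Grassmannians, Lagrangian and orthogonal Grassmannians, and the two exceptional spaces---and, via the Kostant--Snow combinatorics describing each $H^q(M,\Omega_M^p(\ell))$ explicitly, check that the only surviving configurations are $M=\bbP^n$ with $Y$ a hyperplane (where $T_Y\hookrightarrow T_{\bbP^n}\vert_Y$ is a genuine destabilizer) and $M=Q^n$ with $Y$ a hyperplane section (where $T_Y\hookrightarrow T_{Q^n}\vert_Y$ is strictly semi-destabilizing). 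This case-by-case saturation of the BCM bound is the hard part; once it is done, the contrapositive gives the claimed stability in all other cases.
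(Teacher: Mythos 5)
The paper does not prove this statement at all: it is quoted verbatim from Biswas--Chaput--Mourougane \cite[Theorems A and B]{BiswasChaputMourougane2015} and used as a black box (e.g.\ in the proof of Theorem \ref{Restrition Invariant Picard Group}), so there is no internal proof to compare yours against. Judged on its own terms, your reduction is the standard and correct one, and it is indeed the skeleton that both the cited paper and the present paper (in Proposition \ref{reduce to exitence of vector fields} and the proof of Theorem \ref{Vanishing complete intersection}) use: a saturated destabilizer of rank $p$ with $\det\cF=\cO_Y(k)$, $nk\geq p\,r_M$, gives a nonzero section of $\Omega_M^{n-p}(r_M-k)\vert_Y$ via $\wedge^pT_M\cong\Omega_M^{n-p}(r_M)$, and the ideal-sheaf sequence reduces everything to vanishing of $H^0(M,\Omega_M^{n-p}(r_M-k))$ and $H^1(M,\Omega_M^{n-p}(r_M-k-d))$, with Theorem D killing the strictly destabilizing range.

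The genuine gap is that everything your proof actually needs to establish is deferred. Stability (as opposed to semistability) requires ruling out the equality case $nk=p\,r_M$, which is precisely where Theorem D gives no information and where the exceptional cases $(\bbP^n,\text{hyperplane})$ and $(Q^n,\text{hyperplane})$ live; your last paragraph acknowledges that this requires running the Kostant--Snow combinatorics through all six types of irreducible Hermitian symmetric spaces, but does not do it, and that computation \emph{is} the theorem. The same holds for the $H^1$ term when $d=1$ and for the rank-one case $p=1$, where Akizuki--Nakano fails and one needs the explicit description of $H^1(M,\Omega_M^{n-1}(\ell))$ (cf.\ Proposition \ref{Vanishing p=n-1}, which shows the quadric genuinely contributes a nonzero group at $\ell=n-2$). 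Finally, the case $n=3$ is not merely ``handled directly'': there $Y$ is a surface, Lefschetz no longer pins down $\pic(Y)$, and the whole first step ($\det\cF=\cO_Y(k)$) breaks down; the analysis of destabilizers with non-cyclic Picard group is a separate substantial argument, of the kind carried out in Section \ref{Section Hyperplane of cubic threefolds} of this paper for cubic threefolds. As written, the proposal is a correct plan whose hard content is entirely outsourced to ``check case by case.''
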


In \cite{BiswasChaputMourougane2015}, this theorem was stated for the cotangent bundle $\Omega_M^1$ of $M$, which is equivalent to Theorem \ref{BCM Theorem A}, since a vector bundle $E$ is $H$-stable over a polarized projective manifold $(Z,H)$ if and only if its dual bundle $E^*$ is $H$-stable. 

As the second application of Theorem \ref{Vanishing complete intersection}, we reduce the effective restriction problem of tangent bundles to the existence of certain twisted vector fields (cf. Proposition \ref{reduce to exitence of vector fields}), and then we can derive the following result.

\begin{thm}\label{Effective restriction HSS}
	For $n\geq 3$, let $M$ be an $(n+r)$-dimensional irreducible Hermitian symmetric space of compact type, and let $\cO_M(1)$ be the ample generator of $\pic(M)$. Let $Y=H_1\cap\dots\cap H_r$ be a smooth complete intersection of dimension $n$ where $H_i\in\vert\cO_M(d_i)\vert$ with $2\leq d_1\leq\dots\leq d_r$. Let $X\in\vert\cO_Y(d)\vert$ be a smooth hypersurface of degree $d$. Assume moreover that the composite of restrictions
	\[\pic(M)\rightarrow\pic(Y)\rightarrow\pic(X)\]
	is surjective. Then the restriction $T_Y\vert_X$ is stable if one of the following conditions holds.
	\begin{enumerate}
		\item $Y$ is a Fano manifold and $M$ is not isomorphic to the projective space $\bbP^{n+r}$ or any smooth quadric hypersurface $Q^{n+r}$.
		
		\item $Y$ is a Fano manifold, $d\geq d_1$ and $M$ is isomorphic to the projective space $\bbP^{n+r}$.
		
		\item $Y$ is a Fano manifold, $d\geq 2$ and $M$ is isomorphic to a smooth quadric hypersurface $Q^{n+r}$.
		
		\item $X$ is general and $d>d_r-r_Y/n$, where $r_Y=r_M-d_1-\dots-d_r$.
	\end{enumerate}
\end{thm}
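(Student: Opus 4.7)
The strategy is to argue by contradiction, converting the instability of $T_Y|_X$ into a nonzero twisted differential form on $Y$ that Theorem~\ref{Vanishing complete intersection} forbids. Suppose $\cF \subset T_Y|_X$ is a destabilizing subsheaf of rank $p$, $1 \le p \le n-1$. The surjection $\pic(M) \to \pic(X)$ forces $\det \cF = \cO_X(m)$ with $m \ge p r_Y/n$; taking $\wedge^p$ and using $\wedge^p T_Y \cong \Omega_Y^{n-p}(r_Y)$ yields a nonzero section
\[
\alpha \in H^0\bigl(X,\, \Omega_Y^{n-p}|_X(\ell)\bigr), \qquad \ell := r_Y - m \le \tfrac{(n-p)\,r_Y}{n}.
\]

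To lift $\alpha$ to $Y$, I would use the restriction sequence
\[
0 \to \Omega_Y^{n-p}(\ell - d) \to \Omega_Y^{n-p}(\ell) \to \Omega_Y^{n-p}|_X(\ell) \to 0.
\]
The $H^0$-term on $Y$ is killed directly by Theorem~\ref{Vanishing complete intersection} applied with $(p',q)=(n-p,0)$: the numerical condition $p'+q = n-p \le n-1$ holds and $\ell \le p' r_Y/n$. After invoking Proposition~\ref{reduce to exitence of vector fields} (the reduction of the restriction problem to the existence of suitable twisted vector fields on $Y$), all that remains is to rule out the obstruction cohomology $H^1(Y,\, \Omega_Y^{n-p}(\ell - d))$.

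Handling this $H^1$ is the main obstacle, since the numerical condition $p+q \le n-1$ of Theorem~\ref{Vanishing complete intersection} fails for the rank-one case $p=1$, and the four sub-cases of the theorem are tailored to resolve this. In case~(4), the hypothesis $d > d_r - r_Y/n$ combined with $\ell \le (n-p)r_Y/n$ makes $\ell - d$ small enough that the required vanishing on $Y$ follows from a Koszul resolution of $Y\subset M$, iterating the sequences $0 \to \cO_M(-d_j)\to \cO_M \to \cO_{H_j}\to 0$, together with Theorem~D of \cite{BiswasChaputMourougane2015} applied to the ambient space $M$. In cases~(1)--(3), where $Y$ is Fano, I would saturate $\cF$ to a line subsheaf and invoke Theorem~\ref{BCM Theorem A} applied to $T_M|_Y$: a destabilizing line subsheaf of $T_Y|_X$ extends to one of $T_M|_X$, contradicting the stability of $T_M|_Y$ unless $M$ is either $\bbP^{n+r}$ or the quadric $Q^{n+r}$; the additional bounds $d \ge d_1$ in case~(2) and $d \ge 2$ in case~(3) are precisely what rule out the would-be destabilizing line subsheaves in these two classical exceptional ambient spaces.
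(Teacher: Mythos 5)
Your opening reduction (wedging a rank-$p$ destabilizing subsheaf of $T_Y\vert_X$ into a section of $\Omega_Y^{n-p}(\ell)\vert_X$, lifting through the restriction sequence, and killing $H^0(Y,\Omega_Y^{n-p}(\ell))$ by Theorem \ref{Vanishing complete intersection}) is essentially the content of Proposition \ref{reduce to exitence of vector fields}, so invoking that proposition on top of it is circular bookkeeping rather than an error; the real task, as you correctly identify, is the rank-one case, i.e.\ showing $H^0(X,T_Y(t)\vert_X)=0$ for $t\le -r_Y/n$. The gap is in how you handle that. For cases (1)--(3), the proposed appeal to Theorem \ref{BCM Theorem A} does not work: a line subsheaf $L\subset T_Y\vert_X$ destabilizes $T_Y\vert_X$ as soon as its slope reaches $r_Y/n$, whereas to contradict stability of $T_M\vert_X$ you would need slope at least $r_M/(n+r)$, which is strictly larger (indeed $r_Y\le r_M-2r$); moreover Theorem \ref{BCM Theorem A} concerns $T_M\vert_Y$ for a \emph{hypersurface} $Y\subset M$, not the codimension-$(r+1)$ complete intersection $X\subset M$. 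The actual mechanism in the paper is Wahl's theorem ($H^0(M,T_M(t))=0$ for $t<0$ unless $M\cong\bbP^N$) combined with the surjectivity of $H^0(M,T_M(t))\to H^0(X,T_M(t)\vert_X)$ from Theorem \ref{surjectivity twisted global vector fields}; when $M$ is $\bbP^{n+r}$ or $Q^{n+r}$ one replaces $M$ by the hypersurface $H_1$ and uses Theorem \ref{Surjective complete intersection Projective spaces}, and the hypotheses $d\ge d_1$ (resp.\ $d\ge2$) are exactly the degree conditions needed for that extension theorem to apply --- they do not ``rule out'' specific destabilizing subsheaves of the exceptional ambient spaces.

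For case (4) the proposal is both unsubstantiated and structurally off: the required obstruction group is $H^1(Y,\Omega_Y^{n-1}(\ell-d))\cong H^{n-1}(Y,\Omega_Y^1(d-\ell))^*$, a $p+q=n$ group on $Y$ that lies outside the range of Theorem \ref{Vanishing complete intersection}, and your sketch (``Koszul plus Theorem D of \cite{BiswasChaputMourougane2015}'') never uses the hypothesis that $X$ is \emph{general}, which appears in the statement for a reason. The paper's argument is genuinely geometric: a section of $T_Y(t)\vert_X$ extends to $\widetilde\sigma\in H^0(M,T_M(t))$ by Theorem \ref{surjectivity twisted global vector fields}; its normal components along the flag $Y_j=H_1\cap\dots\cap H_j$ are sections of $\cO_Y(d_j+t)$ vanishing on $X$, hence zero because $d>d_j+t$ and $X$ is a general member of $\vert\cO_Y(d)\vert$; therefore $\widetilde\sigma\vert_Y\in H^0(Y,T_Y(t))$, which vanishes for $t\le -r_Y/n$ by the stability of $T_Y$ (Theorem \ref{stability hypersurfaces of Hermitian symmetric spaces}). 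Your version uses neither the extension theorem nor the stability of $T_Y$ at this point, and without them the argument does not close.
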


In the case where $Y$ is a general smooth hypersurface in $\bbP^{n+1}$, using the strong Lefschetz property of the Milnor algebra of $Y$, we can prove an extension theorem for twisted vector fields on $X$ (see Theorem \ref{Globalsections}), and an optimal answer to the effective restriction problem can be given in this setting.

\begin{thm}\label{Restrition Invariant Picard Group}
	For $n\geq 3$, let $Y$ be a general smooth hypersurface in the projective space $\bbP^{n+1}$, and let $X\in \vert\cO_Y(d)\vert$ be a general smooth hypersurface of degree $d$ in $Y$. Assume furthermore that the restriction homomorphism $\pic(Y)\rightarrow\pic(X)$ is surjective, then $T_Y\vert_X$ is stable unless $d=1$, and $Y$ is isomorphic to either $\bbP^n$ or $Q^n$.
\end{thm}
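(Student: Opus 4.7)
I would argue by contradiction, using Proposition~\ref{reduce to exitence of vector fields} to convert the instability of $T_Y|_X$ into the existence of a twisted section on $X$, then Theorem~\ref{Globalsections} to lift it to $Y$, and finally the vanishing Theorem~\ref{Vanishing complete intersection} (applied to $Y$ as a hypersurface in the Hermitian symmetric space $\bbP^{n+1}$) to rule it out.

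Assume $T_Y|_X$ is not stable and pick a saturated destabilizing subsheaf $\cF\subset T_Y|_X$ of rank $p\in\{1,\dots,n-1\}$. Since $\pic(Y)\to\pic(X)$ is surjective, $\det\cF\cong\cO_X(a)$ for some integer $a$ satisfying $an\geq p\,r_Y$, where $r_Y=n+2-d_1$ and $d_1=\deg Y$. Taking the $p$-th exterior power of the inclusion and using $\bigwedge^p T_Y\cong\Omega_Y^{n-p}(r_Y)$ yields a nonzero section
\[0\neq s\in H^0\bigl(X,\,\Omega_Y^{n-p}|_X(\ell)\bigr),\qquad \ell:=r_Y-a\leq (n-p)r_Y/n.\]
Proposition~\ref{reduce to exitence of vector fields} repackages this as the existence of a twisted vector field of the appropriate degree on $X$. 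I would then invoke Theorem~\ref{Globalsections} to lift $s$ to a nonzero global section on $Y$; its hypotheses are met because $Y$ is a general smooth hypersurface, so its Milnor algebra $\bbC[x_0,\dots,x_{n+1}]/J(f_Y)$ has the strong Lefschetz property, and $X$ is a general member of $|\cO_Y(d)|$. Applying Theorem~\ref{Vanishing complete intersection} to $Y$ then forces $\ell>(n-p)r_Y/n$ if $\ell\neq 0$, and $n-p=0$ if $\ell=0$; both possibilities contradict $1\leq p\leq n-1$ together with $\ell\leq (n-p)r_Y/n$, so $T_Y|_X$ must be stable.

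The excluded cases $d=1$ with $Y\in\{\bbP^n,Q^n\}$ are exactly those in which the Milnor/Lefschetz input to the extension theorem degenerates (because $r_Y/n\geq 1$ and there \emph{are} twisted vector fields on $Y$); there a direct slope computation shows $T_X\subset T_Y|_X$ is already destabilizing, in agreement with the known instability. The main obstacle in this plan is Theorem~\ref{Globalsections}: converting the strong Lefschetz property of the Jacobian ring of $Y$ into a surjectivity statement for the restriction map of twisted holomorphic forms from $Y$ to $X$ is the crux of the argument, and the boundary situation $\ell=(n-p)r_Y/n$ is the most delicate part, since it is precisely the strict inequality of Theorem~\ref{Vanishing complete intersection} that closes the contradiction.
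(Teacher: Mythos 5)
Your overall strategy coincides with the paper's: reduce the stability of $T_Y\vert_X$ to the vanishing $H^0(X,T_Y(t)\vert_X)=0$ for $t\leq -r_Y/n$ via Proposition~\ref{reduce to exitence of vector fields}, lift such sections to $Y$ via Theorem~\ref{Globalsections}, and kill them on $Y$ using the stability of $T_Y$. However, as written the proposal has three gaps. The most important one: essentially the entire content of the paper's proof is the numerical verification that $-r_Y/n\leq (\rho+d)/2-d_h$, i.e.\ that the range $t\leq -r_Y/n$ in which surjectivity is needed actually lies inside the range $t\leq(\rho+d)/2-d_h$ where Theorem~\ref{Globalsections} applies. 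You assert that the hypotheses of Theorem~\ref{Globalsections} ``are met'' because $Y$ is general, but the SLP is only one hypothesis; the bound on $t$ is the other, and checking it is exactly where $n\geq 3$ and $d_h\geq 3$ enter. Without that inequality the lifting step is not justified.

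Second, Theorem~\ref{Globalsections} lifts twisted \emph{vector fields}, not the $(n-p)$-form $s\in H^0(X,\Omega_Y^{n-p}\vert_X(\ell))$ you construct; no result in the paper extends twisted $p$-forms from $X$ to $Y$ for $p\geq 2$. The correct bookkeeping is that Proposition~\ref{reduce to exitence of vector fields} already disposes of the subsheaves of $\Omega_Y^1\vert_X$ of rank $p\leq n-2$ internally (via the restriction sequence and Theorem~\ref{Vanishing complete intersection} applied on $Y$), and only the rank-$(n-1)$ case produces a genuine section of $T_Y(t)\vert_X$ with $t\leq -r_Y/n$; after lifting, the contradiction is $H^0(Y,T_Y(t))=0$ for $t\leq -r_Y/n$, which is Theorem~\ref{stability hypersurfaces of Hermitian symmetric spaces}, not a direct application of Theorem~\ref{Vanishing complete intersection} to a lifted form. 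Third, the statement asserts stability also when $Y\cong\bbP^n$ or $Q^n$ and $d\geq 2$, and your argument does not cover these cases (for $Q^n$ one has $d_h=2$, so the displayed inequality fails as computed); the paper handles them by citing \cite[Theorem A]{BiswasChaputMourougane2015}. Each gap is fillable, but the first one is precisely where the proof lives.
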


In each exceptional case, the tangent bundle of $X$ will destabilize $T_Y\vert_X$, so our result above is sharp. The stability of restrictions of tangent bundles with an increase of Picard group was also considered in \cite{BiswasChaputMourougane2015}. According to Lefschetz's hyperplane theorem, the map $\pic(Y)\rightarrow\pic(X)$ is always surjective if $n\geq 4$. In fact, Lefschetz proved an even more general version, the so-called Noether-Lefschetz theorem, in \cite{Lefschetz1921}: a very general complete intersection surface $X$ in $\bbP^N$ contains only curves that are themselves complete intersections unless $X$ is an intersection of two quadric threefolds in $\bbP^4$, or a quadric surface in $\bbP^3$, or a cubic surface in $\bbP^3$ (see also \cite{Green1988,Kim1991}). In these exceptional cases, the possibilities of the pair $(Y,X)$ are as follows:
\begin{enumerate}[label=(\arabic*)]
	\item The manifold $Y$ is the projective space $\bbP^3$ and $X$ is a quadric surface or a cubic surface.
	
	\item The manifold $Y\subset\bbP^4$ is a quadric threefold and $X$ is a linear section or a quadric section.
	
	\item The manifold $Y\subset\bbP^4$ is a cubic threefold and $X$ is a linear section of $Y$.
\end{enumerate}

When $Y$ is a quadric threefold or the projective space $\bbP^3$, according to \cite[Theorem B]{BiswasChaputMourougane2015}, the restriction $T_Y\vert_X$ is semi-stable with respect to $-K_X$ unless $Y$ and $X$ are both projective spaces; and $T_Y\vert_X$ is stable with respect to $-K_X$ if $X$ is not a linear section. In the following result, we address the stability of the restriction $T_Y\vert_X$ in the case where $Y$ is a cubic threefold and $X$ is a linear section.

\begin{thm}\label{Semi-stability Cubic}
	Let $Y\subset\bbP^4$ be a general cubic threefold, and let $X\in\vert\cO_Y(1)\vert$ be a general smooth linear section. Then the restriction $T_Y\vert_X$ is stable with respect to $\cO_X(1)$.
\end{thm}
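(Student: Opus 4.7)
The slope $\mu(T_Y\vert_X)=2$ follows from $c_1(T_Y)=2H_Y$ and $H^2=3$ on $X$. The plan is: suppose for contradiction that a saturated reflexive destabilizing subsheaf $\cF\subset T_Y\vert_X$ exists, of rank $1$ or $2$ with $\mu(\cF)\geq 2$, and derive a contradiction from the normal-bundle sequence
\[
0\longrightarrow T_X\longrightarrow T_Y\vert_X\longrightarrow \cO_X(1)\longrightarrow 0
\]
together with two facts that I take to hold for generic $(Y,X)$: (a) the tangent bundle $T_X$ of a general smooth cubic surface is $H$-stable (with $\mu(T_X)=3/2$), a classical fact for del Pezzo surfaces of degree $3$; (b) the extension class $e\in\mathrm{Ext}^1(\cO_X(1),T_X)\cong H^1(X,T_X(-1))$ is non-zero, which I would check via the restriction sequence $0\to T_Y(-2)\to T_Y(-1)\to T_Y\vert_X(-1)\to 0$ on $Y$ combined with the vanishing $H^0(Y,T_Y(-1))=0$ for a generic cubic threefold (proved by observing that the map $V^{*}=H^0(T_{\bbP^4}(-1))\to H^0(\cO_Y(2))$ sending $v\mapsto v(F)\vert_Y$ is injective when $F$ is generic).

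For a rank $1$ destabilizer $\cF=\cO_X(D)$ with $D\cdot H\geq 2$, composing with the projection $T_Y\vert_X\to\cO_X(1)$ either vanishes --- forcing $\cF\subset T_X$ and contradicting (a) --- or yields a non-zero section of $\cO_X(H-D)$. Since $H$ is ample and $(H-D)\cdot H\leq 1$, the class $H-D$ is either trivial or one of the $27$ lines $\ell_j$ on $X$, so either $D=H$ (which splits the normal sequence, contradicting (b)) or $D=H-\ell_j$. To eliminate the latter, twist the normal sequence by $\cO_X(\ell_j-H)$: stability of $T_X$ gives $H^0(X,T_X(\ell_j-H))=0$, so any non-zero section in $H^0(X,T_Y\vert_X(\ell_j-H))$ injects into $H^0(X,\cO_X(\ell_j))\cong\bbC$ with image lying in the kernel of the connecting map, which is cup product with $e$. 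The main obstacle is to prove that $s_{\ell_j}\cdot e\in H^1(X,T_X(\ell_j-H))$ is non-zero for every one of the $27$ lines simultaneously; I plan to handle this by upper-semicontinuity on the universal family, verifying the non-vanishing at a single highly symmetric pair $(Y_0,X_0)$ whose automorphisms act transitively on the $27$ lines, reducing the $27$ checks to a single cup-product computation.

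A rank $2$ destabilizer $\cF$ dualizes to a saturated rank $1$ subsheaf $\cL^{-1}\hookrightarrow\Omega_Y^1\vert_X$ with $\mu(\cL^{-1})\geq -2$. Applying the analogous dichotomy to the dual cotangent sequence $0\to\cO_X(-1)\to\Omega_Y^1\vert_X\to\Omega_X^1\to 0$, the composition $\cL^{-1}\to\Omega_X^1$ must be non-zero (else $\mu(\cL^{-1})=-3$); then $H$-stability of $\Omega_X^1$ (slope $-3/2$) together with the integrality of $\mu(\cL^{-1})=-L\cdot H$ pins down $\mu(\cL^{-1})=-2$ exactly. Since $(L-1)\cdot H<0$ forces $H^0(X,\cO_X(L-1))=0$, the existence of such a $\cL^{-1}$ is equivalent to the presence of a non-trivial section of $\Omega_X^1\otimes\cL$ which lifts to $\Omega_Y^1\vert_X\otimes\cL$; each possibility is eliminated by a cup-product-with-$e$ obstruction parallel to the rank $1$ case, completing the proof.
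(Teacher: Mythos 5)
Your overall architecture (reduce to rank $1$ and rank $2$ subsheaves of $T_Y\vert_X$, play the normal bundle sequence against the stability of $T_X$, and locate the dangerous cases at degrees $3$ and $2$, i.e.\ $D=H$ and $D=H-\ell_j$) matches the paper's, but the two steps that carry all the difficulty are not actually proved. First, your verification of (b), the non-vanishing of $e\in H^1(X,T_X(-1))$, does not work as stated: from $0\to T_Y(-2)\to T_Y(-1)\to T_Y(-1)\vert_X\to 0$ you can only conclude that the cokernel of $H^0(Y,T_Y(-1))\to H^0(X,T_Y(-1)\vert_X)$ injects into $H^1(Y,T_Y(-2))$, and by Serre duality $H^1(Y,T_Y(-2))\cong H^2(Y,\Omega_Y^1)^*\cong H^{1,2}(Y)^*$, which is $5$-dimensional for a cubic threefold. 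So $H^0(X,T_Y(-1)\vert_X)=0$ (equivalently $e\not=0$, since $h^0(X,T_X(-1))=0$ forces $h^0(X,T_Y(-1)\vert_X)\leq 1$ with equality iff the sequence splits) is exactly the kind of statement that does \emph{not} follow from the naive restriction sequence. The paper gets surjectivity of $H^0(Y,T_Y(t))\to H^0(X,T_Y(t)\vert_X)$ for $t\leq 0$ from Theorem \ref{Globalsections}, whose proof needs the strong Lefschetz property of the Milnor algebra of $Y$; some input of this strength is unavoidable here. Second, the non-vanishing $s_{\ell_j}\cup e\not=0$ for all $27$ lines is the entire content of the $D=H-\ell_j$ case, and you only announce a plan (a symmetric special pair plus semicontinuity) without exhibiting the pair or doing the cup-product computation. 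The paper instead observes that a subsheaf $\cO_X(H-\ell_j)\subset T_Y\vert_X$ yields a nonzero section of $T_Y(-C)\vert_X\subset T_Y\vert_X$ for a conic $C$, hence of $T_Y\vert_X$, which extends to a vector field on $Y$ by Theorem \ref{Globalsections} — impossible for a cubic threefold. The same mechanism disposes of $D=H$.

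The rank $2$ case has a further gap. After dualizing to a saturated $\cL^{-1}\subset\Omega_Y^1\vert_X$ of degree $-2$ with nonzero image in $\Omega_X^1$, stability of $\Omega_X^1$ only gives that the saturation of that image has degree $\leq -2$, so you are left with a possible degree $-2$ saturated invertible subsheaf of $\Omega_X^1$. Eliminating these is the hard point: the paper's Proposition \ref{Invertible subsheaf} shows that a cubic surface admits \emph{no} saturated invertible subsheaf of $\Omega_X^1$ of degree $>-3$, and its proof goes through Fahlaoui's correspondence with projective one-forms, the lemma producing sections of $\cO(K_{S})\otimes L^*\otimes L'^*$, and a case analysis on the $27$ lines culminating in the exclusion of the class $-4E_0+2(E_1+\dots+E_4)+E_5+E_6$. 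Your ``cup-product-with-$e$ obstruction parallel to the rank $1$ case'' presupposes a classification of the candidate degree $-2$ subsheaves that you have not supplied; without an analogue of Proposition \ref{Invertible subsheaf} (or the extension theorem for twisted vector fields), this case is open in your write-up.
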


This paper is organized as follows. In Section \ref{Hermitian Lefshetz}, we introduce the basic notions concerning Hermitian symmetric spaces and the Lefschetz properties of Artinian algebras. In Section \ref{Section Vanishing theorems}, we collect the results about the cohomologies of $(n-1)$-forms of Hermitian symmetric spaces and we introduce the notion of special cohomologies. In Section \ref{Section Extension of vector fields}, we investigate the twisted vector fields over complete intersections in Hermitian symmetric spaces and we prove some extension results in various settings. In Section \ref{Section Stability Invariant Picard group}, we address the stability of tangent bundles of complete intersections in Hermitian symmetric spaces and study the effective restriction problem. In particular, we prove Theorem \ref{Vanishing complete intersection}, Theorem \ref{stability hypersurfaces of Hermitian symmetric spaces}, Theorem \ref{Effective restriction HSS} and Theorem \ref{Restrition Invariant Picard Group}. In Section \ref{Section Hyperplane of cubic threefolds}, we consider the case where the Picard number increases by restriction and prove Theorem \ref{Semi-stability Cubic}. 

\subsection*{Convention} For an $n$-dimensional projective variety $Y$, we denote by $\Omega_Y^1$ the sheaf of K\"ahler differentials and denote by $\Omega^p_Y$ the sheaf $\wedge^p\Omega_Y^1$. For a line bundle $\cO_Y(1)$ and a coherent sheaf $\cF$ on $Y$, we will denote $\cF\otimes\cO_Y(\ell)$ by $\cF(\ell)$, and the number $h^i(Y,\cF)$ is the dimension of $H^i(Y,\cF)$ over $\bbC$. Moreover, the dual sheaf $\cHom{\cF}{\cO_Y}$ of $\cF$ is denoted by $\cF^*$. If $Y$ is smooth, the canonical divisor, denoted by $K_Y$, is a Weil divisor associated to $\Omega_Y^n$. For a submanifold $X$ of a polarized manifold $(Y,\cO_Y(1))$, we denote by $\cO_X(1)$ the restriction $\cO_Y(1)\vert_X$. If $Y$ is a projective manifold with Picard number one, we consider always the polarization given by the ample generator $\cO_Y(1)$ of $\pic(Y)$ unless otherwise stated.

\subsection*{Acknowledgements} I would like to thank my thesis advisor Christophe Mourougane for bringing my attention to this problem, and also for his helpful discussions, suggestions and constant encouragements. It is a pleasure to thank Pierre-Emmanuel Chaput, St\'ephane Druel, Wei-Guo Foo, Andreas H\"oring and Zhenjian Wang for useful discussion and helpful comments. This paper was written during my stay at Institut de Recherche Mathématique de Rennes (IRMAR) and Laboratoire de Math\'ematiques J.~A. Dieudonn\'e (LJAD), and I would like to thank both institutions for their hospitality and their support. I want to thank the referee for suggesting the spectral sequence argument to simplify the proofs of Theorem \ref{Vanishing complete intersection} and Proposition \ref{Extension of vector field}, and to drop the genericity assumption in the previous version of this paper.

\section{Hermitian symmetric spaces and Lefschetz properties}\label{Hermitian Lefshetz}

In this section, we collect some basic materials about Hermitian symmetric spaces of compact type and the Lefschetz properties of Artinian algebras. We refer to \cite{BorelHirzebruch1958} and \cite{MiglioreNagel2013} for further details. 

\subsection{Hermitian symmetric spaces}

Let $(M,g)$ be a Riemannian manifold. A non-trivial isometry $\sigma$ of $(M,g)$ is said to be an \emph{involution} if $\sigma^2=id$. A Riemannian manifold $(M,g)$ is said to be \emph{Riemannian symmetric} if at each point $x\in M$ there exists an involution $\sigma_x$ such that $x$ is an isolated fixed point of $\sigma_x$.

\begin{defn}
	Let $(M,g)$ be a Riemannian symmetric manifold. Then $(M,g)$ is said to be a Hermitian symmetric manifold if $(M,g)$ is a Hermitian manifold and the involution $\sigma_x$ at each point $x\in M$ can be chosen to be a holomorphic isometry.
\end{defn}

A Hermitian symmetric space $M$ is called \emph{irreducible} if it cannot be written as the non-trivial product of two Hermitian symmetric spaces. It is well-known that the irreducible Hermitian symmetric spaces of compact type are Fano manifolds of Picard number one. We will denote by $\cO_M(1)$ the ample generator of $\pic(M)$. In this case, the index of $M$ is defined to the unique positive integer $r_M$ such that $\cO_M(-K_M)\cong\cO_M(r_M)$.

The Hermitian symmetric spaces are homogeneous under their isometry groups. According to Cartan, there are exactly six types of irreducible Hermitian symmetric spaces of compact type: Grassmannians (type $A_n$), quadric hypersurfaces (type $B_n$ or $D_n$), Lagrangian Grassmannians (type $C_n$), spinor Grassmannians (type $D_n$) and two exceptional cases (type $E_6$ and $E_7$).

\subsection{Lefschetz properties of Artinian algebras} 

Let $R=\bbC[x_1,\dots,x_r]$ be the graded polynomial ring in $r$ variables over $\bbC$. Let
\[A=R/I=\bigoplus_{i=0}^n A_i\]
be a graded Artinian algebra. Then, by definition, $A$ is finite dimensional over $\bbC$.

\begin{defn}
	Let $A$ be a graded Artinian algebra.
	\begin{enumerate}	
		\item We say that $A$ has the maximal rank property (MRP) if for any $d$, the homomorphism induced by multiplication by $f$
		\[\times f\colon A_i\longrightarrow A_{i+d}\]
		has maximal rank for all $i$ (i.e., is injective or surjective), whenever $f$ is a general form of degree $d$.
		
		\item We say that $A$ has the strong Lefschetz property (SLP) if for any $d$, the homomorphism induced by multiplication by $\ell^d$
		\[\times\ell^d\colon A_i\longrightarrow A_{i+d}\]
		has maximal rank for all $i$ (i.e., is injective or surjective), whenever $\ell$ is a general linear form.
	\end{enumerate}
\end{defn}

\begin{remark}
The strong Lefschetz properties have been extensively investigated in the literature (see \cite{MiglioreNagel2013} and the references therein), while the maximal rank property has only been introduced in \cite{MiglioreMiro-Roig2003} by Migliore and Mir\'o-Roig. Moreover, it is easy to see that SLP implies MRP by semi-continuity.
\end{remark}

Both concepts are motivated by the following theorem which was proved by Stanley in \cite{Stanley1980} using algebraic topology, by Watanabe in \cite{Watanabe1987} using representation theory, by Reid, Roberts and Roitman in \cite{ReidRobertsRoitman1991} with algebraic methods.

\begin{thm}\label{SLP monomial complete intersection}\cite[Theorem 1.1]{MiglioreNagel2013}
	Let $R=\bbC[x_1,\dots,x_r]$, and let $I$ be the Artinian complete intersection $\langle x_1^{d_1},\dots,x_r^{d_r}\rangle$, where $d_i$'s are positive integers. Then $R/I$ has the SLP. 
\end{thm}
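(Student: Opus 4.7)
The plan is to identify $A = R/I$ with the complex cohomology ring of a product of projective spaces and then invoke the Hard Lefschetz theorem. Consider the smooth projective variety $X = \bbP^{d_1-1} \times \cdots \times \bbP^{d_r-1}$, of complex dimension $n := \sum_{i=1}^{r}(d_i - 1)$. By the K\"unneth formula,
\[
H^{\bullet}(X,\bbC) \;\cong\; \bigotimes_{i=1}^{r} H^{\bullet}(\bbP^{d_i-1},\bbC) \;\cong\; \bigotimes_{i=1}^{r} \bbC[h_i]/(h_i^{d_i}),
\]
where $h_i \in H^2(\bbP^{d_i-1},\bbC)$ denotes the hyperplane class of the $i$-th factor. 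Sending $x_i \mapsto h_i$ and halving the topological grading yields a graded $\bbC$-algebra isomorphism $A_j \cong H^{2j}(X,\bbC)$ for $0 \leq j \leq n$; in particular $A_1 \cong H^2(X,\bbC)$ and $A_n \cong \bbC$.

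Next I would invoke the Hard Lefschetz theorem. Because $X$ is K\"ahler, for any K\"ahler class $\omega \in H^{1,1}(X,\bbR) \subset A_1$ and any $0 \leq j \leq n/2$, cup product with $\omega^{n-2j}$ induces an isomorphism $\times \omega^{n-2j}\colon A_j \to A_{n-j}$. A convenient explicit choice is $\omega = x_1 + \cdots + x_r$, which corresponds under the identification above to the ample class of $\cO_X(1,\ldots,1)$.

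Finally I would deduce the SLP. Fix $j,d \geq 0$. Having maximal rank is a Zariski-open condition on $\ell \in A_1$, so it suffices to verify it for the specific class $\omega$. If $j+d > n$ the target vanishes and the map is trivially surjective. If $2j+d \leq n$, then from the factorization
\[
A_j \xrightarrow{\;\times \omega^d\;} A_{j+d} \xrightarrow{\;\times \omega^{n-2j-d}\;} A_{n-j}
\]
and the fact that the composition is the Hard Lefschetz isomorphism, the first arrow is injective. If $2j+d > n$ but $j+d \leq n$, set $j' = n-j-d$; then $2j'+d < n$, so the previous case gives an injection $\times \omega^d\colon A_{j'} \to A_{n-j}$. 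Under the nondegenerate Poincar\'e pairing $A_j \times A_{n-j} \to A_n \cong \bbC$, the maps $\times \omega^d\colon A_j \to A_{j+d}$ and $\times \omega^d\colon A_{n-j-d} \to A_{n-j}$ are adjoint by commutativity of $A$, so the former is surjective. In every case $\times \omega^d$ has maximal rank, which is precisely the SLP. The only substantive input is Hard Lefschetz, so no serious obstacle is expected; the remaining verification is an elementary reduction via Poincar\'e duality.
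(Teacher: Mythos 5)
Your argument is correct: identifying $R/I$ with $H^{2\bullet}(\bbP^{d_1-1}\times\cdots\times\bbP^{d_r-1},\bbC)$ via K\"unneth and applying Hard Lefschetz to the ample class $x_1+\cdots+x_r$, followed by the Poincar\'e-duality reduction to get maximal rank in all degrees, is precisely Stanley's original ``algebraic topology'' proof, which is the one the paper cites (the paper itself only quotes the result from Migliore--Nagel and gives no proof). Nothing further is needed.
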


Let $\bbP^{n+1}$ be the $(n+1)$-dimensional complex projective space, and let $Y\subset\bbP^{n+1}$ be a hypersurface defined by a homogeneous polynomial $h$ of degree $d$. We denote by 
\[J(Y)=\langle \partial h/\partial x_0,\dots,\partial h/\partial x_{n+1}\rangle\]
the \emph{Jacobian ideal} of $Y$, where $[x_0\colon\dots\colon x_{n+1}]$ are the coordinates of $\bbP^{n+1}$. Then the \emph{Milnor algebra} of $Y$ is defined to be the graded $\bbC$-algebra
\[M(Y)\colon=\bbC[x_0,\dots,x_{n+1}]/J(Y).\]

\begin{remark}\cite[p.\,109]{Dimca1987}\label{HM series Milnor algebra}
	One observes that the Hilbert series of the Milnor algebra $M(Y)$ of a general degree $d$ hypersurface $Y$ in $\bbP^{n+1}$ is
	\[H(M(Y))(t)=(1+t+t^2+\dots+t^{d-2})^{n+2},\]
	where $\rho=(d-2)(n+2)$ is the top degree of $M(Y)$. The celebrated Macaulay's theorem (cf. \cite[Th\'eor\`eme 18.19]{Voisin2002}) says that the multiplication map
	\[\mu_{i,j} \colon\ M(Y)_i\times M(Y)_j\longrightarrow M(Y)_{i+j}\]
	is non-degenerated for $i+j\leq \rho$. Using the perfect pairing 
	\[M(Y)_i\times M(Y)_{\rho-i}\rightarrow M(Y)_{\rho}\cong \bbC,\]
	we see that the dimension of $M(Y)_i$ is symmetric. Recall that an element $f\in M(Y)$ of degree $j$ is called faithful if the multiplication $\times f\colon M(Y)_i\rightarrow M(Y)_{i+j}$ has maximal rank for all $i$. Since the dimension of $M(Y)_i$ is strictly increasing over the interval $[0,\rho/2]$,	an element $f$ of degree $j$ is faithful if and only if it induces injections $M(Y)_i\rightarrow M(Y)_{i+j}$ for $i\leq (\rho-j)/2$, equivalently it induces surjections $M(Y)_i\rightarrow M(Y)_{i+j}$ for $i\geq (\rho-j)/2$.
\end{remark}

The proof of Theorem \ref{Restrition Invariant Picard Group} relies on the nonexistence of certain twisted vector fields over $X$. To prove this, we reduce the problem to the nonexistence of certain twisted vector fields over $Y$ by proving an extension result (cf. Theorem \ref{Globalsections}). The main ingredient of the proof of Theorem \ref{Globalsections} is the SLP of the Milnor algebra $M(Y)$ which is well-known to experts. Recall that the Fermat hypersurface of degree $d$ in $\bbP^{n+1}$ is defined by the equation $x_0^d+\dots+x_{n+1}^d=0$.

\begin{prop}\label{MRP}
	Let $Y\subset \bbP^{n+1}$ be a general hypersurface of degree $d$. Then the Milnor algebra $M(Y)$ of $Y$ has the SLP. In particular, $M(Y)$ has the MRP.
\end{prop}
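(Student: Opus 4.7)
The strategy is to verify the SLP at the Fermat hypersurface, where the Jacobian ideal becomes a monomial complete intersection and Theorem \ref{SLP monomial complete intersection} applies directly, and then deduce the result for a general hypersurface by semicontinuity.

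First, I would specialize to the Fermat hypersurface $Y_F \subset \bbP^{n+1}$ defined by $h_F = x_0^d + x_1^d + \cdots + x_{n+1}^d$. Its partial derivatives $\partial h_F/\partial x_i = d\,x_i^{d-1}$ generate the Jacobian ideal $J(Y_F)$, so
\[
M(Y_F) = \bbC[x_0, \ldots, x_{n+1}]\big/\langle x_0^{d-1}, x_1^{d-1}, \ldots, x_{n+1}^{d-1}\rangle
\]
is exactly the Artinian monomial complete intersection of Theorem \ref{SLP monomial complete intersection}; hence $M(Y_F)$ has the SLP. Moreover $M(Y_F)$ has Hilbert series $(1 + t + \cdots + t^{d-2})^{n+2}$, which coincides with the generic Hilbert series recorded in Remark \ref{HM series Milnor algebra}. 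Thus $Y_F$ lies in the Zariski-open locus $U \subset \bbP H^0(\bbP^{n+1}, \cO(d))$ where the Hilbert function of $M(Y)$ takes its generic value, and on $U$ the graded dimensions of $M(Y)$ are locally constant in $h$.

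The proof then concludes by a semicontinuity argument. For each relevant pair $(i, j)$, the multiplication map $\times \ell^j : M(Y)_i \to M(Y)_{i+j}$ is represented by a matrix whose entries are polynomial in the coefficients of $h$ and $\ell$; on $U$, having maximal rank is an open condition cut out by the non-vanishing of appropriate minors. The SLP for $M(Y_F)$ supplies a linear form $\ell_F$ making $(h_F, \ell_F)$ lie in each of the finitely many such open loci, so their intersection is a nonempty open subset of $U \times H^0(\bbP^{n+1}, \cO(1))$. Projecting to $U$ yields an open dense set of $h$ over which the fibre of linear forms $\ell$ satisfying all SLP conditions is nonempty, hence Zariski-dense; this is the SLP for a general hypersurface. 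The final assertion that SLP implies MRP is the semicontinuity observation already noted after the definitions: the power $\ell^j$ is itself a form of degree $j$, so the open locus of forms $f$ of degree $j$ with $\times f$ of maximal rank is nonempty and thus Zariski-dense. The only step with real content is the identification of the Fermat Jacobian ideal with a monomial complete intersection; the rest is standard, and no genuine obstacle arises.
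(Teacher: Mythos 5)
Your proposal is correct and follows exactly the paper's argument: identify the Milnor algebra of the Fermat hypersurface with the monomial complete intersection $\bbC[x_0,\dots,x_{n+1}]/\langle x_0^{d-1},\dots,x_{n+1}^{d-1}\rangle$, invoke Theorem \ref{SLP monomial complete intersection}, and conclude by semicontinuity. You spell out the semicontinuity step (constancy of the Hilbert function plus openness of the maximal-rank condition) in more detail than the paper does, but the route is the same.
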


\begin{proof}
	Thanks to Theorem \ref{SLP monomial complete intersection}, the Milnor algebra of the Fermat hypersurface of degree $d$ in $\bbP^{n+1}$ has the SLP. Then we conclude by semi-continuity.
\end{proof}

\section{Twisted $(n-1)$-forms and special cohomologies}\label{Section Vanishing theorems}

In this section, we collect some vanishing results about the cohomologies of twisted $(n-1)$-forms of $n$-dimensional irreducible Hermitian symmetric spaces of compact type. Moreover, we introduce the notion of special cohomologies and we prove that all irreducible Hermitian symmetric spaces of compact type have special cohomologies. This notion is very useful in studying the twisted vector fields over complete intersections in Hermitian symmetric spaces in the next section. We start with a result due to Snow.

\begin{defn}\cite[Definition 2.4]{BiswasChaputMourougane2015}
	Let $\ell, n\in\bbN$ be two fixed positive integers. An $n$-tuple of integers $\textbf{a}_n=(a_i)_{1\leq i\leq n}$ is called an $\ell$-admissible $C_n$-sequence if $\vert a_i\vert=i$ and $a_i+a_j\not=2\ell$ for all $i\leq j$. Its weight is defined to be $p(\textbf{a}_n)=\sum_{a_i>0} a_i$ and its $\ell$-cohomological degree is defined to be
	\[q(\textbf{a}_n)=\#\{\, (i,j)\, \mid\, i\leq j\ \textrm{and}\ a_i+a_j>2\ell\, \}.\]
\end{defn}

\begin{prop}\cite[\S 2.1]{Snow1988}\label{Lagrange admissible sequence}
	Let $M=Sp(2n)/U(n)$ be a type $C_n$ irreducible Hermitian symmetric space of compact type. Then $H^q(M,\Omega_M^p(\ell))\not=0$ implies that there exists an $\ell$-admissible $C_n$-sequence such that its weight is $p$ and its $\ell$-cohomological degree is $q$. 
\end{prop}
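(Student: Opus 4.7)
The plan is to invoke Bott's theorem on $M = Sp(2n)/U(n) = G/P$, following the strategy of Snow \cite{Snow1988}. First, I would identify $\Omega_M^p(\ell)$ as a homogeneous vector bundle on $G/P$. With the standard basis $\epsilon_1,\dots,\epsilon_n$ of the weight lattice of $Sp(2n)$, the ample generator $\cO_M(1)$ corresponds to the fundamental weight $\omega_n = \epsilon_1 + \cdots + \epsilon_n$, and the nilpotent radical of $P$ has weights $\epsilon_i+\epsilon_j$ for $i\leq j$. Hence $\Omega_M^1$ is the homogeneous bundle associated with $\Sym^2 Q^*$ for $Q$ the rank-$n$ tautological quotient, and $\Omega_M^p$ decomposes into irreducible $GL(n)$-summands arising from the plethysm $\wedge^p(\Sym^2)$.

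Next I would apply Bott's theorem to each summand. For a summand with highest weight $\mu$, the cohomology $H^q$ of its twist by $\cO_M(\ell)$ is nonzero iff $\mu+\ell\omega_n+\rho$ is regular for the Weyl group $W_G$ of type $C_n$ (which acts on $\epsilon_1,\dots,\epsilon_n$ by signed permutations), and in that case $q$ equals the length of the unique element of $W_G$ taking this weight into the strictly dominant chamber. In type $C_n$, regularity amounts to the coordinates being nonzero and pairwise distinct in absolute value, i.e.\ $(\cdot)_i+(\cdot)_j\neq 0$ for $i\leq j$ and $(\cdot)_i-(\cdot)_j\neq 0$ for $i<j$.

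The remaining step is a purely combinatorial translation. A choice of sign and ordering in the Weyl-group action on the $\rho$-shifted weight is encoded by a sequence $(a_i)_{1\leq i\leq n}$ with $|a_i|=i$; the non-regularity conditions above become $a_i+a_j\neq 2\ell$ for $i\leq j$; the degree $p$ of $\Omega_M^p$ extracted from the associated Schur-like summand equals $\sum_{a_i>0} a_i = p(\textbf{a}_n)$; and the number of positive roots of $G$ sent to negative roots by the relevant Weyl element — hence its length, and thus $q$ — equals $\#\{i\leq j : a_i+a_j>2\ell\} = q(\textbf{a}_n)$. Assembling these three identifications produces an $\ell$-admissible $C_n$-sequence of weight $p$ and $\ell$-cohomological degree $q$ from any nonzero cohomology class.

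The main obstacle is this explicit dictionary, especially the $i=j$ cases in the admissibility and length formulas, which reflect the short coroots $\epsilon_i$ (dual to the long roots $2\epsilon_i$) specific to type $C$ and have no analog in type $A$. Beyond Borel-Weil-Bott and the root data of $C_n$ no new geometric input is required; one follows \cite[\S 2.1]{Snow1988} for the precise bookkeeping.
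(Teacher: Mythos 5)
The paper gives no proof of this proposition: it is quoted directly from Snow's work, and your outline is precisely a reconstruction of Snow's argument in \cite[\S 2.1]{Snow1988} (Borel--Weil--Bott applied to the irreducible summands of $\Omega_M^p(\ell)$ on $Sp(2n)/U(n)$, with the regularity and length conditions repackaged into the combinatorics of $\ell$-admissible $C_n$-sequences). The approach and the dictionary you describe are correct and match the cited source, so there is nothing to compare against within the paper itself.
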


\begin{example}\label{Example C4}
	Denote by $M$ the Lagrangian Grassmannian $Sp(8)/U(4)$. Then $M$ is a $10$-dimensional Fano manifold with index $5$. Moreover, if $\ell$ is an integer such that $1\leq \ell\leq 4$, then we have $H^q(M,\Omega_M^{9}(\ell))=0$ for any $q\geq 0$. In fact, if $H^q(M,\Omega_M^9(\ell))\not=0$, by Proposition \ref{Lagrange admissible sequence}, there exists an $\ell$-admissible $C_4$-sequence $\textbf{a}$ with $\ell$-cohomological degree $q$ and weight $9$. This implies
	\[\textbf{a}=(-1,2,3,4).\]
	As $1\leq \ell\leq 4$, one can easily see that $\textbf{a}$ cannot be $\ell$-admissible.
\end{example}

Before giving the statement in the general case, we recall the cohomologies of the twisted holomorphic $p$-forms of projective spaces and smooth quadric hypersurfaces.

\begin{thm}\cite{Bott1957}\label{Bott Formula}
	Let $n$, $p$, $q$ and $\ell$ be integers, with $n$ positive and $p$ and $q$ nonnegative. Then
	\[h^q(\bbP^n,\Omega_{\bbP^n}^p(\ell))=\begin{dcases}
	\binom{n+\ell-p}{\ell}\binom{\ell-1}{p}, & \textrm{if}\ q=0, 0\leq p\leq n, \ell>p;\\
	1, &\textrm{if}\ \ell=0, p=q;\\
	\binom{p-\ell}{-\ell}\binom{-\ell-1}{n-p}, &\textrm{if}\ q=n, 0\leq p\leq n, \ell<p-n;\\
	0, &\textrm{otherwise}.
	\end{dcases}\]
\end{thm}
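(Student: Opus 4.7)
The plan is to prove Bott's formula by induction on $p \geq 0$, using the Euler sequence on $\bbP^n$ as the main engine.

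For the base case $p = 0$, one has $\Omega^0_{\bbP^n} = \cO_{\bbP^n}$, so the claim reduces to Serre's classical computation of $h^q(\bbP^n, \cO_{\bbP^n}(\ell))$, which matches Bott's formula in the $p = 0$ range. For the inductive step, take the $p$-th exterior power of the Euler sequence
\[ 0 \rightarrow \Omega^1_{\bbP^n} \rightarrow V^* \otimes \cO_{\bbP^n}(-1) \rightarrow \cO_{\bbP^n} \rightarrow 0 \]
with $V = \bbC^{n+1}$. Since the rightmost term is a line bundle, the induced filtration on $\wedge^p(V^* \otimes \cO_{\bbP^n}(-1))$ has only two nonzero graded pieces, yielding, after twisting by $\cO_{\bbP^n}(\ell)$, the short exact sequence
\[ 0 \rightarrow \Omega^p_{\bbP^n}(\ell) \rightarrow \cO_{\bbP^n}(\ell-p)^{\oplus \binom{n+1}{p}} \rightarrow \Omega^{p-1}_{\bbP^n}(\ell) \rightarrow 0. \]
Feeding this into the long exact sequence in cohomology and combining the known cohomology of $\cO_{\bbP^n}(\ell - p)$ with the inductive hypothesis applied to $\Omega^{p-1}_{\bbP^n}(\ell)$ determines $h^q(\bbP^n, \Omega^p_{\bbP^n}(\ell))$.

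The main obstacle is the case analysis. In the intermediate regime $p - n \leq \ell < p$ with $\ell \neq 0$, the line bundle flanks $\cO_{\bbP^n}(\ell - p)$ vanish in all degrees, so the connecting homomorphism is an isomorphism $H^q(\Omega^{p-1}(\ell)) \cong H^{q+1}(\Omega^p(\ell))$ and the inductive hypothesis transports directly, forcing all such cohomologies to vanish. The diagonal case $p = q$, $\ell = 0$ follows from the same isomorphisms applied along $H^0(\cO_{\bbP^n}) \cong H^1(\Omega^1_{\bbP^n}) \cong \cdots \cong H^p(\Omega^p_{\bbP^n})$, since $\cO_{\bbP^n}(-k)$ is acyclic for $1 \leq k \leq n$. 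In the extremal regimes ($\ell \geq p$ or $\ell \leq p - n - 1$), one reads the target dimension off a four-term exact sequence involving known binomials, and verification reduces to a Pascal-type identity. Serre duality $h^q(\bbP^n, \Omega^p(\ell)) = h^{n-q}(\bbP^n, \Omega^{n-p}(-\ell))$ provides a consistency check and, if one wishes, halves the work.

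An alternative and more conceptual route bypasses the induction entirely: $\Omega^p_{\bbP^n}(\ell)$ is a homogeneous bundle on $\bbP^n = GL_{n+1}/P$, so the Borel-Weil-Bott theorem computes its cohomology from the action of the Weyl group on the associated weight. The weight is singular precisely outside the three cases listed in the formula, while in the non-singular cases the Weyl dimension formula reproduces the binomial products $\binom{n+\ell-p}{\ell}\binom{\ell-1}{p}$ and $\binom{p-\ell}{-\ell}\binom{-\ell-1}{n-p}$ in one stroke. Either way, the combinatorial content of the proof lives entirely in these two binomial identities; the hard part is simply to lay out the cases cleanly and to check that the connecting maps have the ranks that the formula predicts.
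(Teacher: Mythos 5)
The paper does not prove this statement at all: it is quoted verbatim from Bott's 1957 paper, so any argument you give is necessarily a ``different route.'' Your Euler-sequence induction is the classical textbook proof (it is essentially the one in Okonek--Schneider--Spindler), and two of your three regimes are handled completely and correctly: in the intermediate range $p-n\leq \ell\leq p-1$ with $\ell\neq 0$ the flanking groups $H^q(\bbP^n,\cO_{\bbP^n}(\ell-p))$ all vanish, so the connecting maps are isomorphisms and the inductive hypothesis transports with no further input; the same mechanism gives the diagonal case $\ell=0$, $p=q$. The Borel--Weil--Bott alternative you mention is also complete as stated and is the cleanest way to see where the two binomial products come from.

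The one genuine soft spot is the extremal regime, and it is not merely bookkeeping. For $\ell\geq p$ (the case $\ell\leq p-n-1$, $q=n$ being its Serre dual) the long exact sequence degenerates to
\[
0\to H^0(\Omega^p_{\bbP^n}(\ell))\to H^0(\cO_{\bbP^n}(\ell-p))^{\oplus\binom{n+1}{p}}\xrightarrow{\ \alpha\ } H^0(\Omega^{p-1}_{\bbP^n}(\ell))\to H^1(\Omega^p_{\bbP^n}(\ell))\to 0,
\]
with all higher terms controlled by induction. This determines only the difference $h^0-h^1$, so you cannot ``read the target dimension off'': proving $h^1=0$ is exactly the surjectivity of $\alpha$, which is what you were trying to establish, and your ``Pascal-type identity'' only checks consistency, not the ranks. (The same issue already appears at $\ell=p\geq 1$, where the formula predicts total vanishing but the sequence a priori only gives $h^0=h^1$.) The standard repair is to compute $H^0$ directly: identify $H^0(\Omega^p_{\bbP^n}(\ell))$ inside $\wedge^p V^*\otimes S^{\ell-p}V^*$ as the kernel of contraction with the Euler vector field, and use the exactness of the resulting Koszul complex of the regular sequence $(x_0,\dots,x_n)$ in the relevant degrees; this yields $h^0$ as an alternating sum of binomials and forces $\alpha$ to be surjective, closing the induction. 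You acknowledge that ``the connecting maps have the ranks that the formula predicts'' must be checked, but that check is the actual mathematical content of the extremal case and needs to be supplied for the Euler-sequence proof to stand on its own.
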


As a consequence, $H^q(\bbP^n,\Omega_{\bbP^n}^{n-1}(\ell))\not=0$ for some $\ell\in\bbZ$ if and only if $q=0$ and $\ell\geq n$, or $q=n-1$ and $\ell=0$, or $q=n$ and $\ell\leq -2$.

\begin{thm}\cite[Theorem 4.1]{Snow1986}\label{Quadric Cohomologies}
	Let $X$ be an $n$-dimensional smooth quadric hypersurface.
	\begin{enumerate}
		\item If $-n+p\leq\ell\leq p$ and $\ell\not=0$, $-n+2p$, then $H^q(X,\Omega_X^{p}(\ell))=0$ for all $q$.
		
		\item $H^q(X,\Omega_X^p)\not=0$ if and only if $q=p$.
		
		\item $H^q(X,\Omega_X^p(-n+2p))\not=0$ if and only if $p+q=n$.
		
		\item If $\ell>p$, then $H^q(X,\Omega^p_X(\ell))\not=0$ if and only if $q=0$.
		
		\item If $\ell<-n+p$, then $H^q(X,\Omega_X^p(\ell))\not=0$ if and only if $q=n$.
	\end{enumerate}
\end{thm}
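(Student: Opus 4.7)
The plan is induction on $p$, combining Bott's formula (Theorem \ref{Bott Formula}) with the two standard short exact sequences attached to the smooth ample divisor $X \subset \bbP^{n+1}$. For the base case $p = 0$, the defining sequence $0 \to \cO_{\bbP^{n+1}}(\ell - 2) \to \cO_{\bbP^{n+1}}(\ell) \to \cO_X(\ell) \to 0$ together with Bott's formula immediately yields that $H^q(X, \cO_X(\ell))$ is non-zero precisely when $(q,\ell) = (0, \ell \geq 0)$ or $(n, \ell \leq -n)$. Noting that $\omega_X = \cO_X(-n)$ and $-n + 2\cdot 0 = -n$, this matches all five assertions at $p = 0$.

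For the inductive step, I would exploit the wedge power of the conormal sequence
\[
0 \to \Omega_X^{p-1}(\ell - 2) \to \Omega_{\bbP^{n+1}}^p|_X \otimes \cO_X(\ell) \to \Omega_X^p(\ell) \to 0, \qquad \text{(A)}
\]
together with the restriction sequence
\[
0 \to \Omega_{\bbP^{n+1}}^p(\ell - 2) \to \Omega_{\bbP^{n+1}}^p(\ell) \to \Omega_{\bbP^{n+1}}^p|_X \otimes \cO_X(\ell) \to 0. \qquad \text{(B)}
\]
Bott's formula forces $H^\ast(\bbP^{n+1}, \Omega_{\bbP^{n+1}}^p(\ell'))$ to vanish outside three regimes: $\ell' > p$ (giving $H^0 \neq 0$), $\ell' < p - (n+1)$ (giving $H^{n+1} \neq 0$), and $\ell' = 0$ (giving $H^p \neq 0$). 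Feeding this into the long exact sequence of (B), and then into that of (A) together with the inductive hypothesis applied to $\Omega_X^{p-1}(\ell - 2)$, shows that non-zero cohomology can only arise in four situations: $\ell > p$ (statement (4)), $\ell < -n + p$ (statement (5)), $\ell = 0$ (statement (2), using the Lefschetz hyperplane theorem to carry the Hodge class from $\bbP^{n+1}$ onto $X$), and $\ell = -n + 2p$, where the exceptional class from the previous step at $\ell - 2 = -n + 2(p-1)$ propagates forward via (A) (statement (3)). All remaining $\ell$ in the range $-n + p \leq \ell \leq p$ fall into the simultaneous vanishing regime of both sequences, yielding statement (1).

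The main technical obstacle lies in the bookkeeping of the connecting homomorphisms: one must ensure that the predicted non-vanishing classes survive the long exact sequences rather than being annihilated. Propagating the exceptional class at $\ell = -n + 2p$ from step $p - 1$ to step $p$ requires verifying that no unintended cancellation occurs with a contribution from the Hodge class on $\bbP^{n+1}$ coming through (B) (in particular for $\ell = 2$, where $\ell - 2 = 0$ brings in $H^p(\bbP^{n+1}, \Omega_{\bbP^{n+1}}^p) = \bbC$); this ultimately reduces to a careful degree count governed by Bott's formula and by the explicit description of the restriction map $H^\ast(\bbP^{n+1}, \Omega_{\bbP^{n+1}}^p) \to H^\ast(X, \Omega_X^p)$ supplied by the Lefschetz theorem. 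A related subtlety occurs in the middle dimension when $n = 2m$ and $p = q = m$: there $-n + 2p = 0$, so statements (2) and (3) overlap at this single value of $\ell$, accounting for the well-known jump $h^{m,m}(X) = 2$ coming from the primitive middle Hodge class.
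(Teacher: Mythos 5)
The paper does not prove this statement at all: it is quoted verbatim from Snow, whose argument runs through the Bott--Kostant machinery for homogeneous vector bundles (computing weights of the isotropy representation of $Q^n = SO(n+2)/P$), not through exact sequences on a hypersurface. Your inductive approach via the conormal and restriction sequences is therefore a genuinely different, more elementary route, and it is a viable one -- but as written it has a real gap, not just deferred bookkeeping.

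The gap is your claim that every $\ell$ with $-n+p\leq\ell\leq p$ and $\ell\neq 0,-n+2p$ lands in a ``simultaneous vanishing regime of both sequences.'' This is false at several values of $\ell$. Concretely, take $\ell=-n+p$ with $0<p<n$. Then $\ell-2=p-n-2<-n+(p-1)$, so the inductive hypothesis (case (5)) gives $H^{n}(X,\Omega_X^{p-1}(\ell-2))\neq 0$; and from sequence (B) together with Bott's formula one gets $H^{n}(X,\Omega^p_{\bbP^{n+1}}(\ell)\vert_X)\cong H^{n+1}(\bbP^{n+1},\Omega^p_{\bbP^{n+1}}(p-n-2))\neq 0$. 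The long exact sequence of (A) then reads
\[
0\rightarrow H^{n-1}(X,\Omega_X^p(\ell))\rightarrow H^{n}(X,\Omega_X^{p-1}(\ell-2))\xrightarrow{\ \delta\ } H^{n}(X,\Omega^p_{\bbP^{n+1}}(\ell)\vert_X)\rightarrow H^{n}(X,\Omega_X^p(\ell))\rightarrow 0,
\]
so statement (1) at this $\ell$ is equivalent to $\delta$ being an isomorphism -- a dimension count plus an injectivity argument that the proposal neither states nor supplies. The same phenomenon occurs at $\ell=2$ (which you flag but do not resolve): e.g.\ for $p=2$, $n\geq 3$, one must show that the map $H^1(X,\Omega_X^1)\rightarrow H^1(X,\Omega^2_{\bbP^{n+1}}(2)\vert_X)$, between two one-dimensional spaces, is nonzero in order to conclude $H^0(X,\Omega_X^2(2))=0$. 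These are precisely the points where the theorem would fail if the connecting maps degenerated, so they cannot be dismissed as routine; one needs either an identification of $\delta$ with cup product against the hyperplane class (hard Lefschetz), or a Serre-duality/Euler-characteristic argument pinning down the dimensions. With those maps controlled, the rest of your induction (base case, statements (2)--(5), and the propagation of the exceptional class in (3)) does go through as you describe.
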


In particular, if $X$ is a smooth quadric hypersurface of dimension $n$, then $H^q(X,\Omega_X^{n-1}(\ell))\not=0$ for some $\ell\in\bbZ$ if and only if $q=0$ and $\ell\geq n$, or $q=1$ and $\ell=n-2$, or $q=n-1$ and $\ell=0$, or $q=n$ and $\ell\leq -2$. The following general result is essentially proved in \cite{Snow1986} and \cite{Snow1988}.

\begin{prop}\label{Vanishing p=n-1}
	Let $M$ be an $n$-dimensional irreducible  Hermitian symmetric space of compact type. Then $H^q(M,\Omega_M^{n-1}(\ell))\not=0$ if and only if one of the following conditions is satisfied.
	\begin{enumerate}
		\item $q=0$ and $\ell\geq \min\{n,r_M\}$.
		
		\item $q=n-1$ and $\ell=0$.
		
		\item $q=n$ and $\ell\leq -2$.
		
		\item $M\cong Q^n$, $q=1$ and $\ell=n-2$.
	\end{enumerate}
\end{prop}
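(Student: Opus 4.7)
The strategy is a case analysis along the Cartan classification of irreducible Hermitian symmetric spaces of compact type. The two cases $M\cong\bbP^n$ and $M\cong Q^n$ follow directly from the cohomology tables already recorded. For $M=\bbP^n$ one substitutes $p=n-1$ into Bott's formula (Theorem~\ref{Bott Formula}); the three resulting non-vanishing ranges give conditions~(1)--(3), with $\min\{n,r_M\}=n$ since $r_M=n+1$. For $M=Q^n$ one substitutes $p=n-1$ into Theorem~\ref{Quadric Cohomologies}: the generic non-vanishing cases reproduce (1)--(3) (with $\min\{n,r_M\}=n$ since $r_M=n$), while part~(3) of that theorem at $p=n-1$ forces the extra class at $q=1$ and $\ell=-n+2(n-1)=n-2$, which is exactly condition~(4).

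For the remaining types I would combine Serre duality,
\[
h^{q}(M,\Omega_M^{n-1}(\ell))=h^{n-q}(M,\Omega_M^1(-\ell)),
\]
which recasts the problem as a non-vanishing statement for twisted $1$-forms, with the Bott-Borel-Weil computation packaged in Snow's algorithm \cite{Snow1986,Snow1988}. In the Lagrangian Grassmannian case $M=Sp(2k)/U(k)$ of type $C_k$, Proposition~\ref{Lagrange admissible sequence} reduces the analysis to $\ell$-admissible $C_k$-sequences $\mathbf{a}=(a_1,\dots,a_k)$ of weight $n-1$, with $n=\dim M=k(k+1)/2$. Since $\sum_i|a_i|=k(k+1)/2$, the weight constraint forces a unique candidate $\mathbf{a}=(-1,2,3,\dots,k)$ (as in Example~\ref{Example C4}); one then tabulates its pairwise sums $a_i+a_j$, checks $\ell$-admissibility for each $\ell\in\bbZ$, and reads off the $\ell$-cohomological degree to recover exactly conditions~(1)--(3), with threshold $\min\{n,r_M\}=r_M=k+1$. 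The Grassmannian (type $A$), spinor Grassmannian (type $D$), and exceptional $E_6,E_7$ cases proceed by the same algorithm; the constraint $p=n-1$ leaves only a handful of candidate weight data, all concentrated in cohomological degrees $0$, $n-1$, and $n$, and they reproduce the asserted thresholds.

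The hard part is not conceptual but purely combinatorial: type by type one must verify that no admissible weight vector of weight $n-1$ produces a non-vanishing cohomology group outside of cases (1)--(4). The enumeration is lightened by working on the Serre-dual side, where $\Omega_M^1(k)$ has very small cohomology away from a few distinguished values of $k$, so the candidates are easy to list; nevertheless, the spinor Grassmannians and the two exceptional types require separate verifications. Modulo these enumerations, the necessary input is already available in \cite{Snow1986,Snow1988} and in the results quoted earlier in this section.
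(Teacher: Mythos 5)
Your proposal is correct and follows essentially the same route as the paper: the $\bbP^n$ and $Q^n$ cases are read off from Theorem~\ref{Bott Formula} and Theorem~\ref{Quadric Cohomologies}, and the remaining types are treated by Serre duality combined with Snow's algorithm, with the type-by-type enumeration deferred to \cite{Snow1986,Snow1988} exactly as the paper does (it first uses the cohomological-degree bound of \cite[Proposition 1.1]{Snow1988} to dispose of $\ell\geq r_M$ and $\ell\leq -2$ uniformly, then cites Snow's specific theorems and tables for types $A$, $C$, $D$, $E_6$, $E_7$ in the remaining finite range). Your explicit admissible-sequence computation for general type $C_k$, forcing $\mathbf{a}=(-1,2,\dots,k)$, is a self-contained version of what the paper obtains by citation together with Example~\ref{Example C4}.
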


\begin{proof}
	If $n\geq r_M$ or $n\leq 3$, then $X$ is isomorphic to $\bbP^n$ or $Q^n$ and we can conclude by Theorem \ref{Bott Formula} and Theorem \ref{Quadric Cohomologies}. On the other hand, if $\ell\geq r_M$, by \cite[Proposition 1.1]{Snow1988}, the cohomological degree of $\Omega_M^{n-1}(\ell)$ is $0$. As a consequence, $H^q(M,\Omega_M^{n-1}(\ell))\not=0$ if and only if $q=0$. Moreover, it is well-known that $H^q(M,\Omega_M^p)\not=0$ if and only if $q=p$. So we shall assume that $n-1\geq r_M\geq \ell+1$, $n\geq 4$ and $\ell\not=0$. In particular, $M$ is not of type $B_n$.
	
	If $\ell\leq -2$, by Serre duality, we obtain that $H^q(M,\Omega_M^{n-1}(\ell))\not=0$ if and only if $H^{n-q}(M,\Omega_M^1(-\ell))\not=0$. Recall that the cohomological degree of the sheaf $\Omega_M^1(-\ell)$ is $0$ if $-\ell\geq 2$ by \cite[Proposition 1.1]{Snow1988}. So $H^q(M,\Omega_M^{n-1}(\ell))\not=0$ if and only if $q=n$ if $\ell\leq -2$.
	
	If $\ell=-1$, by Serre duality again, we see that  $H^q(M,\Omega_M^{n-1}(-1))\not=0$ if and only if $H^{n-q}(M,\Omega_M^1(1))\not=0$. Thanks to \cite[Theorem 2.3]{Snow1986}, we have $H^{n-q}(M,\Omega_M^1(1))=0$ for all $q\geq 0$ if $M$ is not of type $C_n$. On the other hand, if $M$ is of type $C_n$, the vanishing of $H^q(M,\Omega_M^1(1))$ follows from \cite[Theorem 2.3]{Snow1988}.
	
	If $1\leq \ell\leq r_M-1$, we can prove the result case by case. If $M$ is of type $E_6$ or $E_7$, from \cite[Table 4.4 and Table 4.5]{Snow1988}, we have $H^q(M,\Omega_M^{n-1}(\ell))=0$ for any $q\geq 0$. If $M$ is of type $A_n$, as $M$ is not isomorphic to $\bbP^n$ or $Q^n$, we get $H^q(M,\Omega_M^{n-1}(\ell))=0$ for all $q\geq 0$ by \cite[Theorem 3.4 (3)]{Snow1986}. Here we remark that $Gr(2,4)$ is isomorphic to $Q^4$. If $M$ is of type $C_n$ and $n\not=4$, we have $H^q(M,\Omega_M^{n-1}(\ell))=0$ for all $q\geq 0$ by \cite[Theorem 2.4 (3)]{Snow1988}. If $M$ is of type $C_4$, then $M$ is isomorphic to the $10$-dimensional homogeneous space $Sp(8)/U(4)$, and we get $H^q(M,\Omega_M^{9}(\ell))=0$ for all $q\geq 0$ according to Example \ref{Example C4}. If $M$ is of type $D_n$, it follows from \cite[Theorem 3.4 (3)]{Snow1988} that $H^q(M,\Omega_M^{n-1}(\ell))=0$ for all $q\geq 0$ if $n\geq 5$. If $M$ is of type $D_n$ and $n\leq 4$, then $M$ is isomorphic to either $\bbP^n$ or $Q^n$, which is impossible by our assumption.
\end{proof}

As a direct application, we get the following result which is useful to describe the twisted vector fields over complete intersections.

\begin{cor}\label{p+q=n}
	For $n\geq 3$, let $M$ be an $n$-dimensional irreducible Hermitian symmetric space of compact type. Then $H^{n-1}(M,\Omega_M^1(\ell))\not=0$ if and only if $\ell=-n+2$ and $M$ is isomorphic to a smooth quadric hypersurface $Q^n$.
\end{cor}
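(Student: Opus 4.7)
The plan is to reduce the claim directly to Proposition \ref{Vanishing p=n-1} via Serre duality. On the smooth projective $n$-fold $M$, Serre duality gives
\[
H^{n-1}(M,\Omega_M^1(\ell))^{*}\cong H^{1}\bigl(M,T_M\otimes K_M\otimes\cO_M(-\ell)\bigr).
\]
I would then invoke the canonical isomorphism $T_M\otimes K_M\cong\Omega_M^{n-1}$, induced by the perfect pairing $\Omega_M^1\otimes\Omega_M^{n-1}\to\Omega_M^n=K_M$, to rewrite the right-hand side as $H^1(M,\Omega_M^{n-1}(-\ell))$. This yields the equivalence
\[
H^{n-1}(M,\Omega_M^1(\ell))\neq 0 \ \Longleftrightarrow\ H^{1}(M,\Omega_M^{n-1}(-\ell))\neq 0.
\]

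To finish, I would apply Proposition \ref{Vanishing p=n-1} with cohomological degree $q=1$. Since $n\geq 3$, the integer $1$ does not belong to the set $\{0,\,n-1,\,n\}$, so cases (1)--(3) of that proposition are automatically excluded. The only remaining possibility is case (4), which requires $M\cong Q^{n}$ and $-\ell=n-2$, i.e.\ $\ell=-n+2$. Conversely, the same case (4) guarantees $H^{1}(Q^{n},\Omega_{Q^{n}}^{n-1}(n-2))\neq 0$, providing the reverse implication.

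There is essentially no obstacle here: the whole corollary is a one-line application of Serre duality to the previously established Proposition \ref{Vanishing p=n-1}, once one observes that the hypothesis $n\geq 3$ is exactly what isolates the quadric case from the three ``generic'' cases in which $H^{n-1}(M,\Omega_M^1(\ell))$ could otherwise be forced to vanish.
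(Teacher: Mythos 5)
Your proof is correct and is essentially identical to the paper's: both apply Serre duality to identify $H^{n-1}(M,\Omega_M^1(\ell))$ with $H^1(M,\Omega_M^{n-1}(-\ell))$ and then read off the answer from Proposition \ref{Vanishing p=n-1}, where $n\geq 3$ rules out all cases except the quadric case (4).
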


\begin{proof}
	As $H^{n-1}(M,\Omega_M^1(\ell))\not=0$ if and only if $H^1(M,\Omega_M^{n-1}(-\ell))\not=0$ by Serre duality, thus the result follows from Proposition \ref{Vanishing p=n-1} directly.
\end{proof}

Moreover, one can easily derive the following result for smooth hypersurfaces in projective spaces by Bott's formula.

\begin{lemma}\label{p+q=n Hypersurfaces}
	For $n\geq 3$, let $Y\subset\bbP^{n+1}$ be a smooth hypersurface of degree $d$. Then we have $H^{n-1}(Y,\Omega_Y^1(-r_Y+t))=0$ for $t>d$, where $r_Y=n+2-d$.
\end{lemma}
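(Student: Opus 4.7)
The plan is to use Serre duality to translate the statement about $H^{n-1}(Y,\Omega_Y^1(-r_Y+t))$ into a statement about $H^1(Y,T_Y(-t))$, and then to vanish the latter using the standard exact sequences relating $T_Y$, $T_{\bbP^{n+1}}|_Y$, and $\cO_Y$. Concretely, since $\omega_Y\cong\cO_Y(-r_Y)$, Serre duality gives
\[
H^{n-1}(Y,\Omega_Y^1(-r_Y+t))\;\cong\;H^1(Y,T_Y(-t))^*,
\]
so the lemma reduces to showing $H^1(Y,T_Y(-t))=0$ whenever $t>d$.

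For this I would twist the normal bundle sequence
\[
0\to T_Y\to T_{\bbP^{n+1}}|_Y\to \cO_Y(d)\to 0
\]
by $\cO_Y(-t)$ and read off the piece
\[
H^0(Y,\cO_Y(d-t))\to H^1(Y,T_Y(-t))\to H^1(Y,T_{\bbP^{n+1}}|_Y(-t)).
\]
The left term vanishes because $d-t<0$ and $Y$ is projective and connected. To kill the right term, twist the Euler sequence restricted to $Y$,
\[
0\to \cO_Y(-t)\to \cO_Y(1-t)^{\oplus(n+2)}\to T_{\bbP^{n+1}}|_Y(-t)\to 0,
\]
and use the sandwich
\[
H^1(Y,\cO_Y(1-t))^{\oplus(n+2)}\to H^1(Y,T_{\bbP^{n+1}}|_Y(-t))\to H^2(Y,\cO_Y(-t)).
\]

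The only remaining input is the vanishing $H^i(Y,\cO_Y(k))=0$ for $0<i<n$ and every $k\in\bbZ$, which is a standard consequence of Bott's formula on $\bbP^{n+1}$ applied to the ideal sequence $0\to\cO_{\bbP^{n+1}}(k-d)\to\cO_{\bbP^{n+1}}(k)\to\cO_Y(k)\to 0$. Since $n\geq 3$, both indices $i=1$ and $i=2$ lie in the range $0<i<n$, so $H^1(Y,\cO_Y(1-t))=0$ and $H^2(Y,\cO_Y(-t))=0$, giving $H^1(Y,T_{\bbP^{n+1}}|_Y(-t))=0$ and hence the desired vanishing. There is no real obstacle here; the only thing to be careful about is the book-keeping of the twist (the shift $-r_Y+t$ is tailored precisely so that after Serre duality the power of $\cO_Y$ becomes $-t$), and the hypothesis $n\geq 3$ is exactly what ensures both intermediate cohomologies in the Euler sequence computation vanish.
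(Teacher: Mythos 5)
Your proof is correct; it is essentially the Serre--dual of the paper's argument. The paper works on the cotangent side: it first kills $H^{n-1}(Y,\Omega^1_{\bbP^{n+1}}(-r_Y+t)\vert_Y)$ via the restriction sequence from $\bbP^{n+1}$ together with Bott's formula, then uses the conormal sequence to inject $H^{n-1}(Y,\Omega^1_Y(-r_Y+t))$ into $H^n(Y,\cO_Y(K_Y+t-d))$, which vanishes by Kodaira since $t-d>0$. You instead dualize at the outset to $H^1(Y,T_Y(-t))$, replace the conormal sequence by the normal bundle sequence (its dual), replace the Bott computation by the restricted Euler sequence plus the vanishing of the intermediate cohomology $H^i(Y,\cO_Y(k))$ for $0<i<n$, and replace Kodaira vanishing by the elementary observation $H^0(Y,\cO_Y(d-t))=0$. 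Each step of your argument is the Serre dual of the corresponding step in the paper, so the skeleton is the same; what your version buys is that it only needs the cohomology of line bundles on $\bbP^{n+1}$ and on $Y$ (via the ideal sequence), avoiding both Bott's formula for twisted one-forms and Kodaira vanishing, so it is marginally more elementary. All the reductions are sound: the Serre duality bookkeeping $\Omega_Y^1(-r_Y+t)^{\vee}\otimes\omega_Y\cong T_Y(-t)$ is right, and the hypothesis $n\geq 3$ is used exactly where you say, to put both $i=1$ and $i=2$ in the vanishing range for $H^i(Y,\cO_Y(k))$.
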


\begin{proof}
	By Bott's formula (cf. Theorem \ref{Bott Formula}) and the following exact sequence of sheaves
	\[0\rightarrow \Omega^1_{\bbP^{n+1}}(-r_Y+t-d)\rightarrow\Omega_{\bbP^{n+1}}^1(-r_Y+t)\rightarrow\Omega_{\bbP^{n+1}}^1(-r_Y+t)\vert_Y\rightarrow 0,\]
	we see that $H^{n-1}(Y,\Omega_{\bbP^{n+1}}^1(-r_Y+t)\vert_Y)=0$ for any $t\in\bbZ$.	Thus, the following exact sequence of $\cO_Y$-sheaves
	\[0\rightarrow \cO_Y(-r_Y+t-d)\rightarrow \Omega_{\bbP^{n+1}}^1(-r_Y+t)\vert_Y\rightarrow \Omega_Y^1(-r_Y+t)\rightarrow 0\]
	induces an injective map of groups
	\[H^{n-1}(Y,\Omega^1_Y(-r_Y+t))\rightarrow H^n(Y,\cO_Y(-r_Y+t-d)).\]
	Then we can conclude by Kodaira's vanishing theorem.
\end{proof}

\begin{defn}
	Let $(Z,\cO_Z(1))$ be a polarized projective manifold of dimension $n\geq 4$. We say that $(Z,\cO_Z(1))$ has special cohomologies if $H^q(Z,\Omega_Z^1(\ell))=0$ for all $\ell\in\bbZ$ and all $2\leq q\leq n-2$.
\end{defn}

We remark that our definition of special cohomologies is much weaker than that given in \cite{PeternellWisniewski1995}.

\begin{example}\label{Complete intersection projective spaces Special Cohomologies}
	By \cite[Corollary 2.3.1]{Naruki1977/78}, an $n$-dimensional smooth complete intersection $Y$ in a projective space has special cohomologies if $n\geq 4$. Moreover, if $\widetilde{Y}$ is a cyclic covering of $Y$, then $Y$ has special cohomologies (see \cite[Theorem 1.6]{PeternellWisniewski1995}). 
\end{example}

\begin{example}
	Let $Y$ be a smooth weighted complete intersection of dimension $n$ in a weighted projective space, and let $\cO_Y(1)$ be the restriction to $Y$ of the universal $\cO(1)$-sheaf from the weighted projective space. Then $(Y,\cO_Y(1))$ has special cohomologies by \cite[Satz 8.11]{Flenner1981} .
\end{example}

\begin{prop}\label{symmetric space has special cohomology}
	Let $(M,\cO_M(1))$ be an $n$-dimensional irreducible Hermitian symmetric space of compact type. If $n\geq 4$, then $(M,\cO_M(1))$ has special cohomologies.
\end{prop}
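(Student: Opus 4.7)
The plan is to reduce the statement to Proposition \ref{Vanishing p=n-1} via Serre duality. First I would rewrite $H^q(M,\Omega_M^1(\ell))$ in terms of cohomology of twisted $(n-1)$-forms. Since $M$ is Fano of index $r_M$, we have $\omega_M\cong\cO_M(-r_M)$ and the isomorphism $T_M\cong\Omega_M^{n-1}\otimes\omega_M^{-1}=\Omega_M^{n-1}(r_M)$. Combined with Serre duality this yields the natural identification
\[
H^q(M,\Omega_M^1(\ell))\;\cong\;H^{n-q}\!\bigl(M,\,T_M(-\ell)\otimes\omega_M\bigr)^{\!*}\;\cong\;H^{n-q}\!\bigl(M,\Omega_M^{n-1}(-\ell)\bigr)^{\!*}.
\]

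Next I would observe that if $2\leq q\leq n-2$, then the dual cohomological degree $p:=n-q$ also satisfies $2\leq p\leq n-2$. The task is therefore reduced to showing that $H^p(M,\Omega_M^{n-1}(\ell'))=0$ for every $\ell'\in\bbZ$ and every $p$ in this range. For this I would simply invoke Proposition \ref{Vanishing p=n-1}: it enumerates \emph{all} pairs $(p,\ell')$ for which this cohomology can fail to vanish, and in every case the cohomological degree $p$ lies in the set $\{0,1,n-1,n\}$ (with case $(4)$ contributing only $p=1$ for quadrics). Since $n\geq 4$, the interval $[2,n-2]$ is disjoint from $\{0,1,n-1,n\}$, so no nonvanishing case is possible.

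There is no real obstacle: once Proposition \ref{Vanishing p=n-1} is in hand, the argument is a one-line bookkeeping combined with Serre duality. The only thing to double-check is the identification $T_M\otimes\omega_M\cong\Omega_M^{n-1}$ (standard from the perfect pairing $\Omega_M^1\otimes\Omega_M^{n-1}\to\Omega_M^n$) so that the twist works out correctly, and that the edge case $M\cong Q^n$ covered by item $(4)$ of Proposition \ref{Vanishing p=n-1} still produces a degree outside $[2,n-2]$ when translated back to $H^{n-q}$.
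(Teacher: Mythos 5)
Your proposal is correct and is essentially identical to the paper's proof: both apply Serre duality (via the identification $T_M\otimes\omega_M\cong\Omega_M^{n-1}$) to reduce the claim to the vanishing of $H^{n-q}(M,\Omega_M^{n-1}(-\ell))$ for $2\leq n-q\leq n-2$, and then read off from Proposition \ref{Vanishing p=n-1} that all nonvanishing degrees lie in $\{0,1,n-1,n\}$.
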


\begin{proof}
	By Serre duality, it suffices to consider the group $H^{n-q}(M,\Omega_M^{n-1}(-\ell))$. As $2\leq q\leq n-2$, we get $2\leq n-q\leq n-2$. Now it follows from Proposition \ref{Vanishing p=n-1}.
\end{proof}

\section{Extension of twisted vector fields}\label{Section Extension of vector fields}

This section is devoted to study global twisted vector fields over smooth complete intersections in an irreducible Hermitian symmetric space of compact type. The main aim is to show that the global twisted vector fields over complete intersections can be extended to be global twisted vector fields over the ambient space (cf. Theorem \ref{surjectivity twisted global vector fields}).

\subsection{Twisted vector fields over complete intersections} 

Let $(Z,\cO_Z(1))$ be a polarized manifold, and let $Y\subset Z$ be a subvariety. Then we have a natural restriction map
\[\rho_t\colon H^0(Z,T_Z(t))\longrightarrow H^0(Y,T_Z(t)\vert_Y)\]
for any $t\in\bbZ$. The following proposition concerns on the surjectivity of $\rho_t$ and its proof was communicated to me by the anonymous referee.

\begin{prop}\label{Extension of vector field}
	Let $(Z,\cO_Z(1))$ be a polarized projective manifold. Let $Y$ be the complete intersection $H_1\cap\dots\cap H_r$ where $H_i\in\vert \cO_Z(d_i)\vert$. Assume moreover that there exists an integer $r_Z\in\bbZ$ such that $\cO_Z(-K_Z)\cong\cO_Z(r_Z)$. Given $t\in\bbZ$, if $(Z,\cO_Z(1))$ has special cohomologies, $\dim(Y)\geq 2$ and $H^{\dim(Z)-1}(Z,\Omega^1_Z(-r_Z+d_i-t))=0$ for all $1\leq i\leq r$, then the natural restriction 
	\[\rho_t\colon H^0(Z,T_Z(t))\longrightarrow H^0(Y,T_Z(t)\vert_{Y})\]
	is surjective.
\end{prop}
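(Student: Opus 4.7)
The plan is to use the Koszul resolution of $\cO_Y$ inside $Z$ together with its hypercohomology spectral sequence. Let $n := \dim Z$ and $N^* := \bigoplus_{i=1}^r \cO_Z(-d_i)$, the conormal bundle of the complete intersection $Y \subset Z$. The Koszul complex
\[
K^\bullet :\quad 0 \longrightarrow \wedge^r N^* \longrightarrow \wedge^{r-1} N^* \longrightarrow \cdots \longrightarrow N^* \longrightarrow \cO_Z \longrightarrow 0,
\]
with $\cO_Z$ placed in degree $0$, resolves $\cO_Y$. Tensoring with the locally free sheaf $T_Z(t)$, the complex $C^\bullet := K^\bullet \otimes T_Z(t)$ is quasi-isomorphic to $T_Z(t)\vert_Y$ concentrated in degree $0$ and gives rise to the hypercohomology spectral sequence
\[
E_1^{p,q} = H^q(Z, C^p) = \bigoplus_{|I| = -p} H^q\bigl(Z, T_Z(t-d_I)\bigr) \Longrightarrow H^{p+q}(Y, T_Z(t)\vert_Y),
\]
where $d_I := \sum_{i \in I} d_i$ and $I$ runs over subsets of $\{1, \dots, r\}$ of size $-p$.

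Next I would identify $\rho_t$ with an edge morphism of this spectral sequence. The inclusion of the stupid truncation $\sigma^{\ge 0} C^\bullet = C^0 = T_Z(t)$ into $C^\bullet$, composed with the quasi-isomorphism $C^\bullet \simeq T_Z(t)\vert_Y[0]$, recovers $\rho_t$ on $\mathbb{H}^0$. Consequently, the image of $\rho_t$ equals the deepest piece $F^0 H^0(Y, T_Z(t)\vert_Y) = E_\infty^{0,0}$ of the column filtration. The remaining graded pieces are $E_\infty^{-k, k}$ for $1 \le k \le r$, and so surjectivity of $\rho_t$ will follow as soon as we establish the stronger vanishing $E_1^{-k, k} = 0$ for every such $k$.

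Finally I would analyze each summand of $E_1^{-k, k}$ via Serre duality. Using $T_Z^\vee = \Omega_Z^1$ and $\omega_Z = \cO_Z(-r_Z)$, one has
\[
H^k\bigl(Z, T_Z(t - d_I)\bigr) \cong H^{n-k}\bigl(Z, \Omega_Z^1(-r_Z + d_I - t)\bigr)^{*}.
\]
When $k = 1$, the set $I = \{i\}$ is a singleton, so $d_I = d_i$, and the vanishing is exactly the standing hypothesis $H^{n-1}(Z, \Omega_Z^1(-r_Z + d_i - t)) = 0$. When $2 \le k \le r$, the condition $\dim Y \ge 2$ forces $r \le n-2$, so $n - k$ lies in the range $[2, n-2]$, and the vanishing is furnished by the special cohomologies of $(Z, \cO_Z(1))$. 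The main technical point to treat with care is precisely this range analysis: the "boundary" groups $H^{n-1}$ and $H^n$ of twisted $\Omega_Z^1$ are \emph{not} controlled by special cohomologies, and the assumption $\dim Y \ge 2$ is exactly what confines $H^{n-1}$ to the single diagonal entry $k = 1$ (handled by the hypothesis) while keeping $H^n$ off the diagonal $E_1^{-k,k}$ altogether.
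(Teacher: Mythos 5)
Your proposal is correct and follows essentially the same route as the paper: the Koszul resolution of $\cO_Y$ tensored with $T_Z(t)$, the associated (second quadrant) hypercohomology spectral sequence, and Serre duality to convert each $E_1^{-k,k}$ term into a twisted $\Omega_Z^1$ cohomology group killed by the hypothesis when $k=1$ and by special cohomologies when $2\le k\le r$ (using $r\le n-2$). Your explicit identification of $\rho_t$ with the edge morphism and the range analysis for $n-k$ are exactly the points the paper leaves implicit.
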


\begin{proof}
	Denote by $E$ the vector bundle $\cO_Z(-d_1)\oplus\dots\oplus\cO_Z(-d_r)$ over $Z$. Consider the following Koszul resolution of $\cO_Y$
	\[0\rightarrow \wedge^r E\rightarrow \wedge^{r-1}E\rightarrow\dots\rightarrow E\rightarrow\cO_Z\rightarrow\cO_Y\rightarrow 0.\]
	Tensoring it with $T_Z(t)$, we obtain the following exact sequence
	\begin{multline}\label{Koszul complexe vector field}
		0\rightarrow T_Z(t)\otimes(\wedge^r E)\rightarrow T_Z(t)\otimes(\wedge^{r-1}E)\rightarrow\\
		\dots\rightarrow T_Z(t)\otimes E\rightarrow T_Z(t)\rightarrow T_Z(t)\vert_Y\rightarrow 0.
	\end{multline}
	By definition, for any $1\leq j\leq r$, the vector bundle $\wedge^j E$ splits into a direct sum of line bundles as $\oplus_i\cO_Z(-d_{ij})$ with $d_{ij}\geq j$.
	Since $(Z,\cO_Z(1))$ has special cohomologies and $r=\codim(Y)\leq n-2$, by Serre duality, we have 
	\[H^j(Z,T_Z(t)\otimes(\wedge^j E))\cong\bigoplus_{i} H^q(Z,T_Z(t-d_{ij}))=0\]
	for any $j\geq 2$. On the other hand, as $H^{\dim(Z)-1}(Z,\Omega^1_Z(-r_Z+d_i-t))=0$ by our assumption, by Serre duality again, 
	we obtain 
	\[H^1(Z,T_Z(t)\otimes E)\cong\bigoplus_{i=1}^r H^{\dim(Z)-1}(Z,\Omega^1_Z(-r_Z+d_i-t)=0.\]
	Then the second quadrant spectral sequence associated to the complex \eqref{Koszul complexe vector field} (see \cite[Appendix B]{Lazarsfeld2004}) implies that the natural restriction 
	\[H^0(Z,T_Z(t))\rightarrow H^0(Y,T_Z(t)\vert_Y)\]
	is surjective.
\end{proof}

As an immediate application, we derive the following theorem which will play a key role in the proof of Theorem \ref{Effective restriction HSS}.

\begin{thm}\label{surjectivity twisted global vector fields}
	Let $M$ be an $(n+r)$-dimensional irreducible Hermitian symmetric space of compact type which is not isomorphic to any smooth quadric hypersurface $Q^{n+r}$. Let $Y=H_1\cap\dots\cap H_r$ be a complete intersection of dimension $n$ where $H_i\in\vert \cO_M(d_i)\vert$. Assume moreover that $\dim(Y)=n\geq 2$. Then the natural restriction
	\[\rho_t\colon H^0(M,T_M(t))\longrightarrow H^0(Y,T_M(t)\vert_{Y}).\]
	is surjective for any $t\in\bbZ$.
\end{thm}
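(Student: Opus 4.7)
The plan is to invoke Proposition \ref{Extension of vector field} with $Z=M$ and $\cO_Z(1)=\cO_M(1)$; note that $r_Z=r_M$ is well defined since $M$ is Fano of Picard number one. The three hypotheses needed are: $(M,\cO_M(1))$ has special cohomologies, $\dim(Y)\geq 2$, and $H^{\dim M-1}(M,\Omega^1_M(-r_M+d_i-t))=0$ for every $1\leq i\leq r$. The second of these is exactly the assumption of the theorem.

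The vanishing $H^{\dim M-1}(M,\Omega^1_M(\ell))=0$ at every twist $\ell$ is controlled by Corollary \ref{p+q=n}, which identifies the only nonvanishing case as $M\cong Q^{\dim M}$ with $\ell=-\dim M+2$. Since by hypothesis $M\not\cong Q^{n+r}$, and $\dim M=n+r\geq 3$ (because $n\geq 2$ and $r\geq 1$), this cohomology is zero for every $\ell\in\bbZ$, so in particular it vanishes at $\ell=-r_M+d_i-t$ for each $i$ and each $t$. The third hypothesis is therefore verified.

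For special cohomologies, Proposition \ref{symmetric space has special cohomology} applies as soon as $\dim M\geq 4$. The edge case $\dim M=3$ forces $n=2$, $r=1$, and $M=\bbP^3$ (since the only other option $Q^3$ is excluded). Here the paper's definition of special cohomologies is stated only for dimension at least $4$, so Proposition \ref{Extension of vector field} is not directly quoted; instead I would rerun its proof by hand. The Koszul resolution of $\cO_Y$ shrinks to $0\to\cO_M(-d_1)\to\cO_M\to\cO_Y\to 0$, and after tensoring by $T_M(t)$, surjectivity of $\rho_t$ reduces to $H^1(\bbP^3,T_{\bbP^3}(t-d_1))=0$. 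By Serre duality this equals $H^2(\bbP^3,\Omega^1_{\bbP^3}(d_1-t-4))$, which vanishes by Bott's formula (Theorem \ref{Bott Formula}), since no term with $p=1$, $q=2$, $n=3$ appears in the nonvanishing range.

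The main obstacle, such as it is, is only this bookkeeping around the corner case $\dim M=3$; all the essential content, namely the Koszul-based extension principle and the Bott-type vanishing of top-$q$ twisted $1$-forms, has already been packaged in Proposition \ref{Extension of vector field} and Corollary \ref{p+q=n}. So the proof proper is short: dispatch the edge case by hand and otherwise apply Proposition \ref{Extension of vector field} directly.
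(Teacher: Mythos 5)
Your proposal is correct and follows essentially the same route as the paper: the case $\dim M\geq 4$ is exactly the combination of Proposition \ref{Extension of vector field}, Proposition \ref{symmetric space has special cohomology} and Corollary \ref{p+q=n}, and the case $\dim M=3$ is handled by the same short exact sequence plus Serre duality argument (the paper quotes Corollary \ref{p+q=n} where you quote Bott's formula for $\bbP^3$, which is the same vanishing). No gaps.
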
		

\begin{proof}
	If $n+r\geq 4$, this follows from Corollary \ref{p+q=n}, Proposition \ref{symmetric space has special cohomology} and Proposition \ref{Extension of vector field}. If $n+r=3$, then $Y$ is a hypersurface of degree $d$ in $M$. Thanks to the following exact sequence
	\[0\rightarrow T_M(t-d)\rightarrow T_M(t)\rightarrow T_M(t)\vert_Y\rightarrow 0,\]
	to prove the surjectivity of $\rho_t$, it is enough to show that $H^1(M,T_M(t-d))=0$. By Serre duality, it is equivalent to showing that $H^2(M,\Omega_M^1(-r_M+d-t))=0$. Since $M$ is not isomorphic to any smooth quadric hypersurface, we conclude by Corollary \ref{p+q=n}.
\end{proof}

If $M$ is a smooth quadric hypersurface, then we can also regard $Y$ as a complete intersection of degree $(2,d_1,\dots,d_r)$ in the projective space $\bbP^{n+r+1}$. In general, we have the following result.

\begin{thm}\label{Surjective complete intersection Projective spaces}
	Let $Y=H_1\cap \dots \cap H_r$ be a complete intersection where $H_i\in\vert \cO_{\bbP^{n+r}}(d_i)\vert$. Assume moreover that $H_1$ is smooth, $\dim(Y)=n\geq 2$ and $n+r\geq 4$. If $d_i\geq 2$ for all $1\leq i\leq r$ and $t$ is an integer such that $d_i-t>d_1$ for any $2\leq i\leq r$, then the natural restriction
	\[\rho_t\colon H^0(H_1,T_{H_1}(t))\rightarrow H^0(Y,T_{H_1}(t)\vert_{Y})\]
	is surjective.
\end{thm}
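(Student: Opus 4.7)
The plan is to regard $Y = H_1 \cap \dots \cap H_r$ as a complete intersection inside $H_1$, cut out by the $r-1$ divisors $H_i\vert_{H_1} \in \vert\cO_{H_1}(d_i)\vert$ for $2 \leq i \leq r$, and to apply Proposition \ref{Extension of vector field} with $Z = H_1$. By adjunction, $H_1$ satisfies $\cO_{H_1}(-K_{H_1}) \cong \cO_{H_1}(r_{H_1})$ with $r_{H_1} = n+r+1-d_1$. The three hypotheses of Proposition \ref{Extension of vector field} then reduce to: $\dim(Y) = n \geq 2$ (given); $(H_1, \cO_{H_1}(1))$ has special cohomologies; and the vanishings
\[
H^{\dim(H_1)-1}(H_1, \Omega^1_{H_1}(-r_{H_1} + d_i - t)) = 0 \qquad \text{for all } 2 \leq i \leq r.
\]

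For the second item, Example \ref{Complete intersection projective spaces Special Cohomologies} gives special cohomologies for the smooth hypersurface $H_1 \subset \bbP^{n+r}$ whenever $\dim(H_1) = n+r-1 \geq 4$. For the third, Lemma \ref{p+q=n Hypersurfaces} (which applies since $\dim(H_1) = n+r-1 \geq 3$) produces $H^{n+r-2}(H_1, \Omega^1_{H_1}(-r_{H_1} + s)) = 0$ for all $s > d_1$; specializing $s = d_i - t$ and using the hypothesis $d_i - t > d_1$ yields exactly what is required. Hence Proposition \ref{Extension of vector field} applies and produces the surjectivity of $\rho_t$ whenever $n+r \geq 5$.

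The only remaining case is $n+r = 4$, which combined with $n \geq 2$ forces $r \leq 2$. If $r = 1$, then $Y = H_1$ and $\rho_t = \id$. If $r = 2$, then $n = 2$ and $\dim(H_1) = 3$, so the definition of special cohomologies does not formally apply; however the Koszul resolution collapses to the short exact sequence
\[
0 \to T_{H_1}(t - d_2) \to T_{H_1}(t) \to T_{H_1}(t)\vert_Y \to 0,
\]
and surjectivity of $\rho_t$ reduces to $H^1(H_1, T_{H_1}(t - d_2)) = 0$. By Serre duality this is dual to $H^2(H_1, \Omega^1_{H_1}(-r_{H_1} + d_2 - t))$, which vanishes by Lemma \ref{p+q=n Hypersurfaces} applied with $s = d_2 - t > d_1$. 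There is no serious obstacle: the main (and essentially only) bookkeeping step is matching the arithmetic inequality $d_i - t > d_1$ in the hypothesis to the parameter range in Lemma \ref{p+q=n Hypersurfaces}, together with isolating the low-dimensional case $n+r = 4$ so that Example \ref{Complete intersection projective spaces Special Cohomologies} can be invoked on $H_1$.
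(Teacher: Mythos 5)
Your proposal is correct and follows essentially the same route as the paper: reduce to Proposition \ref{Extension of vector field} applied with $Z=H_1$, verify special cohomologies of $H_1$ via Example \ref{Complete intersection projective spaces Special Cohomologies} when $n+r\geq 5$, check the required $H^{\dim(H_1)-1}$ vanishing with Lemma \ref{p+q=n Hypersurfaces} using $d_i-t>d_1$, and handle $n+r=4$ separately with the short exact sequence and Serre duality. No discrepancies to report.
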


\begin{proof}
	If $r=1$, the statement is trivial. So we may assume that $r\geq 2$. If $n+r\geq 5$, by definition and \cite[Corollary 2.3.1]{Naruki1977/78}, the hypersurface $H_1$ has special cohomologies (cf. Example \ref{Complete intersection projective spaces Special Cohomologies}). Thanks to Proposition \ref{Extension of vector field}, it suffices to verify that we have
	\[H^{\dim(Y_1)-1}(Y_1,\Omega_{Y_1}^1(-r_{Y_1}+d_i-t))=0\]
	for all $2\leq i\leq r$. Since $t\in\bbZ$ is an integer such that $d_i-t>d_1$ for any $2\leq i\leq r$, it follows from Lemma \ref{p+q=n Hypersurfaces} immediately. 
	
	If $n+r=4$, then $Y=H_1\cap H_2$. As $d_2-t>d_1$ by assumption, thanks to Lemma \ref{p+q=n Hypersurfaces}, we have $H^2(H_1,\Omega_{H_1}^1(-r_{H_1}+d_2-t))=0$. By Serre duality, we obtain $H^1(H_1,T_{H_1}(t-d_2))=0$. Then, it follows from the following exact sequence
	\[0\rightarrow T_{H_1}(t-d_2)\rightarrow T_{H_1}(t)\rightarrow T_{H_1}(t)\vert_Y\rightarrow 0\]
	that the map $H^0(H_1,T_{H_1}(t))\rightarrow H^0(Y,T_{H_1}(t)\vert_Y)$ is surjective. 
\end{proof}

\subsection{Twisted vector fields over hypersurfaces in projective spaces}
The global sections of $T_{\bbP^{n}}(t)$ can be expressed explicitly by homogeneous polynomials of degree $t+1$. To see this, we consider the twisted Euler sequence
\[0\rightarrow \cO_{\bbP^{n}}(t) \rightarrow \cO_{\bbP^{n}}(t+1)^{\oplus (n+1)}\rightarrow T_{\bbP^{n}}(t)\rightarrow 0.\]
Using the fact $H^1(\bbP^{n},\cO_{\bbP^{n}}(t))=0$, we see that the restriction map 
\[H^0(\bbP^{n},\cO_{\bbP^{n}}(t+1)^{\oplus(n+1)})\longrightarrow H^0(\bbP^{n},T_{\bbP^{n}}(t))\]
is surjective, so a global section $\sigma$ of $T_{\bbP^{n}}(t)$ is given by a vector field on the affine complex vector space $\bbC^{n+1}$
\[\sigma=f_0\frac{\partial}{\partial x_0}+\dots+f_{n}\frac{\partial}{\partial x_{n}},\]
where $f_i$'s are homogeneous polynomials of degree $t+1$. Let $Y\subset\bbP^{n}$ be a smooth hypersurface defined by a homogeneous polynomial $h$ and let $X$ be a submanifold of $Y$. Then the restriction $\sigma\vert_X$ is a global section of $T_Y(t)\vert_X$ if and only if we have
\[\left(\left.f_0\frac{\partial h}{\partial x_0}+\dots+f_{n}\frac{\partial h}{\partial x_{n}}\right)\right\vert_X\equiv 0.\]
Furthermore, we have $\sigma\vert_X\equiv 0$ if and only if
\[\left(x_if_j-x_jf_i\right)\vert_X\equiv 0,\ \ 0\leq i\leq j\leq n.\]

Let $Y$ be a general complete intersection in an $N$-dimensional irreducible Hermitian symmetric space $M$ of compact type such that $N\geq 3$, and let $X\in\vert\cO_Y(d)\vert$ be a general hypersurface of $Y$. By \cite{Wahl1983}, $H^0(M,T_M(t))\not=0$ for some $t<0$ if and only if $M\cong\bbP^{N}$ and $t=-1$. According to Theorem \ref{surjectivity twisted global vector fields} and Theorem \ref{Surjective complete intersection Projective spaces}, we get
\[H^0(Y,T_Y(t))=H^0(X,T_Y(t)\vert_X)=H^0(M,T_M(t))=0\]
for any $t\leq -2$. In the following theorem, we generalize this result to show that if $Y$ is a general hypersurface of $\bbP^{n+1}$, then the natural restriction
\[\alpha_t\colon H^0(Y,T_Y(t))\longrightarrow H^0(X,T_Y(t)\vert_X)\]
is surjective for $t\leq t_0$ large enough depending only on $n$ and the degrees of $X$ and $Y$. This theorem is a key ingredient of the proof of Theorem \ref{Restrition Invariant Picard Group}.

\begin{thm}\label{Globalsections}
	Let $Y\subset \bbP^{n+1}$ be a general smooth hypersurface defined by the homogeneous polynomial $h$ of degree $d_h\geq 2$ and let $X\in\vert\cO_Y(d)\vert$ be a general smooth divisor. If $n\geq 3$, then the restriction map 
	\[H^0(Y,T_Y(t))\longrightarrow H^0(X,T_Y(t)\vert_X)\]
	is surjective for any $t\leq (\rho+d)/2-d_h$, where $\rho=(d_h-2)(n+2)$ is the top degree of the Milnor algebra of $Y$.
\end{thm}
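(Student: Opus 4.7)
The strategy is to analyze the short exact sequence
\[0 \to T_Y(t-d) \xrightarrow{\cdot g} T_Y(t) \to T_Y(t)|_X \to 0\]
on $Y$, where $g\in S_d$ is a defining polynomial of a hypersurface $W\subset\bbP^{n+1}$ with $X=W\cap Y$; such a $g$ exists because $\pic(Y)=\bbZ\cO_Y(1)$ by Lefschetz for $n\geq 3$. The surjectivity of $H^0(Y,T_Y(t))\to H^0(X,T_Y(t)|_X)$ amounts to showing that the connecting homomorphism
\[\delta\colon H^0(X, T_Y(t)|_X)\longrightarrow H^1(Y, T_Y(t-d))\]
vanishes, or equivalently, that multiplication by $g$ yields an injection $H^1(Y,T_Y(t-d))\hookrightarrow H^1(Y,T_Y(t))$.

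The first key step is to identify $H^1(Y, T_Y(s))\cong M(Y)_{s+d_h}$ for every integer $s$. To see this, combine the normal bundle sequence
\[0\to T_Y(s)\to T_{\bbP^{n+1}}(s)|_Y\to\cO_Y(s+d_h)\to 0\]
with the twisted Euler sequence restricted to $Y$. Kodaira's vanishing theorem on the smooth hypersurface $Y$ gives $H^i(Y,\cO_Y(k))=0$ for $1\leq i\leq n-1$ and every $k\in\bbZ$, which via the Euler sequence forces $H^1(Y,T_{\bbP^{n+1}}(s)|_Y)=0$ since $n\geq 3$. The long exact sequence then identifies $H^1(Y,T_Y(s))$ with the cokernel of the map $(f_0,\dots,f_{n+1})\mapsto \sum f_i\partial_i h$ from $H^0(Y,T_{\bbP^{n+1}}(s)|_Y)$ to $H^0(Y,\cO_Y(s+d_h))$. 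By Euler's identity $d_h\cdot h=\sum x_i\partial_i h$ one has $h\in J(Y)$, so this cokernel is exactly $S_{s+d_h}/J(Y)_{s+d_h}=M(Y)_{s+d_h}$.

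Applied to $s=t-d$ and $s=t$, the preceding identification is natural in the sheaf maps, so the arrow $H^1(Y,T_Y(t-d))\to H^1(Y,T_Y(t))$ induced by $\cdot g$ corresponds to honest multiplication by $g$ on the Milnor algebra. Exactness of the long sequence then yields
\[\operatorname{Im}(\delta)=\ker\bigl(\cdot g\colon M(Y)_{t+d_h-d}\longrightarrow M(Y)_{t+d_h}\bigr).\]

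To conclude, invoke Proposition \ref{MRP}: for a general $Y$ the Milnor algebra $M(Y)$ has the MRP, and for a general $X\in|\cO_Y(d)|$ the class $[g]\in M(Y)_d$ is generic (since $h\in J(Y)$), so multiplication by $g$ has maximal rank in every degree. By the symmetry of the Hilbert function of $M(Y)$ recalled in Remark \ref{HM series Milnor algebra}, one has $\dim M(Y)_{t+d_h-d}\leq\dim M(Y)_{t+d_h}$ if and only if $(t+d_h-d)+(t+d_h)\leq\rho$, equivalently $t\leq(\rho+d)/2-d_h$. In this range maximal rank forces injectivity, hence $\ker(\cdot g)=0$, $\delta=0$, and the restriction map is surjective. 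The main obstacle is setting up the Milnor-algebra identification of $H^1(Y,T_Y(s))$ cleanly and verifying that the coboundary in the long exact sequence really corresponds to multiplication by $g$ in $M(Y)$; once this is in place, the combinatorial dimension count together with the MRP deliver the sharp numerical threshold.
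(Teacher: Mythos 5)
Your proof is correct, and it takes a genuinely different route from the paper's. The paper first lifts a section $s\in H^0(X,T_Y(t)\vert_X)$ all the way to a polynomial vector field $\sigma\in H^0(\bbP^{n+1},T_{\bbP^{n+1}}(t))$ via Theorem \ref{surjectivity twisted global vector fields}, then manipulates the explicit relation $\sum f_i\,\partial h/\partial x_i=gf+ph$ to show the auxiliary polynomial $g$ (of degree $t+d_h-d$) lies in $J(Y)$, using injectivity of $\times f\colon M(Y)_{t+d_h-d}\to M(Y)_{t+d_h}$, and finally corrects $\sigma$ by an explicit section vanishing on $X$. You instead compute the obstruction group directly: the identification $H^1(Y,T_Y(s))\cong M(Y)_{s+d_h}$ via the normal bundle and restricted Euler sequences (using $h\in J(Y)$ by Euler's identity, and $H^i(Y,\cO_Y(k))=0$ for $1\leq i\leq n-1$ — which, strictly, for all $k$ follows from the restriction sequence from $\bbP^{n+1}$ rather than from Kodaira vanishing alone, a harmless misattribution), together with naturality under multiplication by the defining form of $X$, reduces the vanishing of the connecting homomorphism to exactly the same injectivity statement on the Milnor algebra, settled by MRP and the symmetry/unimodality of the Hilbert function in the range $t\leq(\rho+d)/2-d_h$. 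Your version avoids the extension theorem entirely and is more conceptual — it makes transparent that the sharp threshold is precisely the degree at which $\dim M(Y)_{t+d_h-d}\leq\dim M(Y)_{t+d_h}$ — while the paper's version is more elementary and explicit; both hinge on the identical Lefschetz-property input. One small point worth making explicit in your write-up: maximal-rank multiplication by a \emph{general form} of $S_d$ descends to a general class in $M(Y)_d$ because the multiplication map depends only on $[g]\in M(Y)_d$ and $S_d\to M(Y)_d$ is a surjective linear map, so a general $X\in\vert\cO_Y(d)\vert$ indeed gives a faithful $[g]$.
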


\begin{proof}
	Since the natural restriction $H^0(\bbP^{n+1},\cO_{\bbP^{n+1}}(d))\rightarrow H^0(Y,\cO_Y(d))$ is surjective, there exists a general homogeneous polynomial $f$ of degree $d$ such that $X=\{f=h=0\}$. We denote by $M(Y)$ and $J(Y)$ the Milnor algebra and Jacobian ideal of $Y$, respectively. Since $H^0(X,T_Y(t)\vert_X)$ is a subset of $H^0(X,T_{\bbP^{n+1}}(t)\vert_X)$ and $H^0(\bbP^{n+1},T_{\bbP^{n+1}}(t))=0$ for $t\leq -2$, by Theorem \ref{surjectivity twisted global vector fields}, we may assume $t\geq -1$. 
	
	Let $s\in H^0(X,T_Y(t)\vert_X)$ be a global section. By Theorem \ref{surjectivity twisted global vector fields}, the section $s$ is the restriction of some global section $\sigma\in H^0(\bbP^{n+1},T_{\bbP^{n+1}}(t))$. Therefore there exist some polynomials $f_i$ of degree $t+1$ such that
	\[s=\sigma\vert_X=\left.\left(f_0\frac{\partial}{\partial x_0}+\dots+f_{n+1}\frac{\partial}{\partial x_{n+1}}\right)\right\vert_X\]
	and
	\[\left(\left. f_0\frac{\partial h}{\partial x_0}+\dots+f_{n+1}\frac{\partial h}{\partial x_{n+1}}\right)\right\vert_X=0.\]
	As a consequence, there exist two homogeneous polynomials $g$ and $p$ (maybe zero) such that
	\[f_0\frac{\partial h}{\partial x_0}+\dots+f_{n+1}\frac{\partial h}{\partial x_{n+1}}=gf+ph.\]
	We claim that $g$ is contained in the Jacobian ideal $J(Y)$ of $Y$. In fact, by Euler's homogeneous function theorem, it follows
	\[\left(f_0-\frac{1}{d_h}px_0\right)\frac{\partial h}{\partial x_0}+\dots+\left(f_{n+1}-\frac{1}{d_h}px_{n+1}\right)\frac{\partial h}{\partial x_{n+1}}=gf.\]
	Thanks to Proposition \ref{MRP}, the Milnor algebra $M(Y)$ has maximal rank property, hence, by the genericity assumption of $f$, the multiplication map 
	\[(\times f)\colon M(Y)_{t+d_h-d}\longrightarrow M(Y)_{t+d_h}\]
	has maximal rank. Moreover, by the assumption, we have
	\[t+d_h-d\leq \frac{\rho-d}{2},\]
	so the multiplication map $(\times f)$ is injective (cf. Remark \ref{HM series Milnor algebra}). It follows that $g=0$ in $M(Y)$, or equivalently, the polynomial $g$ is contained in the Jacobian ideal of $Y$. Therefore there exist some homogeneous polynomials $g_i$'s of degree $t-d+1$ such that 
	\[g=g_0\frac{\partial h}{\partial x_0}+\dots+g_{n+1}\frac{\partial h}{\partial x_{n+1}}.\]
	This yields
	\[\left(f_0\frac{\partial h}{\partial x_0}+\dots+f_{n+1}\frac{\partial h}{\partial x_{n+1}}\right)-\left(g_0f\frac{\partial h}{\partial x_0}+\dots+g_{n+1}f\frac{\partial h}{\partial x_{n+1}}\right)=ph.\]
	We denote by $\sigma'\in H^0(\bbP^{n+1},T_{\bbP^{n+1}}(t))$ the global section defined by
	\[g_0f\frac{\partial}{\partial x_0}+\dots+g_{n+1}f\frac{\partial}{\partial x_{n+1}}.\]
	Then $(\sigma-\sigma')\vert_Y\in H^0(Y,T_Y(t))$. Moreover, as $\sigma'\vert_X\equiv 0$, we have 
	\[(\sigma-\sigma')\vert_X=\sigma\vert_X=s.\]
	Hence the restriction map
	\[H^0(Y,T_Y(t))\rightarrow H^0(X,T_Y(t)\vert_X)\]
	is surjective.
\end{proof}

\section{Stability and effective restriction with invariant Picard group}\label{Section Stability Invariant Picard group}

This section is devoted to study the stability of tangent bundles of smooth complete intersections in Hermitian symmetric spaces. As mentioned in the introduction, this problem was studied by Peternell and Wi{\'s}niewski in \cite{PeternellWisniewski1995} in the projective spaces case. Moreover, we will also consider the effective restriction problem for tangent bundles.

\subsection{Vanishing theorem and stability of tangent bundles}

We start with an observation whose statement was suggested by the referee. It is very useful when we consider the cohomologies of smooth complete intersections in some projective manifolds with many cohomological vanishings.  

\begin{lemma}\label{cohomology hypersurface to ambiant space}
	For $n\geq 2$, let $(Z,\cO_Z(1))$ be a polarized projective manifold of dimension $n+r$. Let $Y=H_1\cap\dots\cap H_r$ be a smooth complete intersection of dimension $n$ where $H_i\in\vert\cO_Z(d_i)\vert$. If there exist integers $p$, $q$ and $\ell$ such that $p+q\leq n$ and $H^q(Y,\Omega_Y^p(\ell))\not=0$, then there exist integers $j_1,\dots,j_r\in\bbN$ such that $0\leq j_1+\dots+j_r\leq p-1$ and such that one of the following statements holds.
	\begin{enumerate}
		\item There exists an integer $2\leq s\leq r+1$ such that \[H^{q+p}(Y,\cO_Y(\ell-pd_{s-1}-(d_s-d_{s-1})j_s-\dots-(d_r-d_{s-1})j_r)\not=0.\]
		
		\item $H^{q+j_1+\dots+j_r}(Y,\Omega_Z^{p-j_1-\dots-j_r}(\ell-j_1 d_1-\dots-j_r d_r)\vert_Y)\not=0$.
	\end{enumerate}
\end{lemma}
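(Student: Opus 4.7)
The approach is to iterate the conormal short exact sequence along the flag of smooth complete intersections
\[ Y = Y_r \subset Y_{r-1} \subset \dots \subset Y_1 \subset Y_0 = Z, \qquad Y_k := H_1 \cap \dots \cap H_k, \]
and track how the nonvanishing cohomology group on $Y$ moves along this flag.

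For each link $Y_k \subset Y_{k-1}$, the conormal sequence
\[ 0 \to \cO_{Y_k}(-d_k) \to \Omega^1_{Y_{k-1}}|_{Y_k} \to \Omega^1_{Y_k} \to 0 \]
is a short exact sequence of locally free $\cO_{Y_k}$-modules, hence locally split. Its $p'$-th exterior power yields a locally split SES whose three terms are all locally free, so it remains exact upon restriction to $Y$ and twisting by $\cO_Y(\ell')$:
\[ 0 \to \Omega_{Y_k}^{p'-1}(\ell'-d_k)|_Y \to \Omega_{Y_{k-1}}^{p'}(\ell')|_Y \to \Omega_{Y_k}^{p'}(\ell')|_Y \to 0. \]
The associated long exact sequence produces the following dichotomy $(\ast)$, which is the main engine of the proof: if $H^{q'}(Y, \Omega_{Y_k}^{p'}(\ell')|_Y) \neq 0$, then either \emph{(a)} $H^{q'}(Y, \Omega_{Y_{k-1}}^{p'}(\ell')|_Y) \neq 0$, or \emph{(b)} $H^{q'+1}(Y, \Omega_{Y_k}^{p'-1}(\ell'-d_k)|_Y) \neq 0$.

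Starting from $H^q(Y, \Omega_Y^p(\ell)) = H^q(Y, \Omega_{Y_r}^p(\ell)|_Y) \neq 0$, the plan is to apply $(\ast)$ iteratively, at each step choosing whichever branch preserves nonvanishing. An (a)-move at level $Y_k$ decrements $k$ by one and keeps $(q', p', \ell')$; a (b)-move at level $Y_k$ acts by $(q', p', \ell') \mapsto (q'+1, p'-1, \ell' - d_k)$. Writing $j_k$ for the number of (b)-moves performed while at level $k$, the iteration strictly decreases $k + p'$, so it must terminate when either $p' = 0$ or $k = 0$ is first reached. If $p' = 0$ is attained first, at some $Y_{s-1}$, then $s - 1 \geq 1$ and $j_{s-1} \geq 1$ (otherwise the iteration would already have halted at $Y_s$), so $j_{s-1} + j_s + \dots + j_r = p$ with $j_s + \dots + j_r \leq p - 1$; substituting $j_{s-1} = p - (j_s + \dots + j_r)$ into $\ell - \sum_{i \geq s-1} j_i d_i$ reshapes the twist into $\ell - pd_{s-1} - \sum_{i \geq s}(d_i - d_{s-1}) j_i$, which is precisely conclusion (1). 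If instead $k = 0$ is attained first, the resulting $H^{q+J}(Y, \Omega_Z^{p-J}(\ell - \sum j_i d_i)|_Y) \neq 0$ with $J = j_1 + \dots + j_r \leq p - 1$ is conclusion (2).

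The only step requiring genuine care is verifying that the exterior-power conormal SES stays exact after restriction to $Y$, but this is immediate from the local freeness of all three terms. Everything else is deterministic bookkeeping about the termination of the iteration, and I anticipate no serious obstacle beyond the telescoping identity that shapes the twist in case (1) into the asymmetric form given in the statement.
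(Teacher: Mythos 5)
Your proof is correct and follows essentially the same route as the paper: both work down the flag $Y=Y_r\subset\dots\subset Y_0=Z$, using the exterior powers of the restricted conormal sequences and their long exact sequences to trade $(p',q',\ell')$ for $(p'-1,q'+1,\ell'-d_k)$ or pass to the ambient $Y_{k-1}$. The only difference is organizational — you run a greedy dichotomy with a termination argument on $k+p'$, while the paper fixes a minimal level $s$ and uses the resulting vanishing at level $s-2$ to force the connecting maps to be injective — but the underlying computation is identical.
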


\begin{proof}
	 Let us denote by $Y_i$ $(1\leq i\leq r)$ the scheme-theoretic complete intersection $H_1\cap\dots \cap H_i$. Then we have $Y=Y_r$. Moreover, for convenience, we will denote $Z$ by $Y_0$. Since $Y$ is smooth, each $Y_i$ is smooth in a neighborhood of $Y$. In particular, the cotangent sheaf $\Omega_{Y_i}^1$ of $Y_i$ is locally free in a neighborhood of $Y$. Let $s$ be the minimal positive integer such that there exist integers $j_s,\dots,j_r\in\bbN$ such that $0\leq k\leq p-1$ where $k=j_s+\dots+j_r$ and such that
	 \[H^{q+k}(Y,\Omega_{Y_{s-1}}^{p-k}(\ell-j_s d_s-\dots-j_r d_r)\vert_Y)\not=0.\]
	 Clearly we have $1\leq s\leq r+1$ and we are in Case (2) if $s=1$. Now we assume that $s\geq 2$. Then, for any $j\in\bbN$ such that $0\leq j+k\leq p-1$, we have
	 \begin{equation}\label{Inductive vanishing}
	 	H^{q+j+k}(Y,\Omega_{Y_{s-2}}^{p-j-k}(\ell-jd_{s-1}-j_s d_s-\dots-j_r d_r)\vert_Y)=0.
	 \end{equation}
	 The restriction over $Y$ of the conormal sequence of $Y_{s-1}$ in $Y_{s-2}$
	 \[0\rightarrow \cO_Y(-d_{s-1})\rightarrow \Omega^1_{Y_{s-2}}\vert_Y\rightarrow\Omega^1_{Y_{s-1}}\vert_Y\rightarrow 0\]
	 induces an exact sequence of vector bundles
	 \[0\rightarrow \Omega_{Y_{s-1}}^{p-j-k-1}(-d_{s-1})\vert_Y\rightarrow\Omega_{Y_{s-2}}^{p-j-k}\vert_Y\rightarrow\Omega_{Y_{s-1}}^{p-j-k}\vert_Y\rightarrow 0.\]
    Tensoring it with $\cO_Y(\ell-jd_{s-1}-j_{s} d_{s}-\dots-j_r d_r)$, we obtain
    	\begin{multline*}
    		0\rightarrow \Omega_{Y_{s-1}}^{p-j-k-1}(\ell-(j+1)d_{s-1}-j_{s} d_{s}-\dots-j_r d_r)\vert_Y\\
    		\rightarrow\Omega_{Y_{s-2}}^{p-j-k}(\ell-jd_{s-1}-j_{s} d_{s}-\dots-j_r d_r)\vert_Y\\
    		\rightarrow\Omega_{Y_{s-1}}^{p-j-k}(\ell-jd_{s-1}-j_{s} d_{s}-\dots-j_r d_r)\vert_Y\rightarrow 0.
    	\end{multline*}
	 Then \eqref{Inductive vanishing} shows that for any $j\in\bbN$ such that $0\leq j+k\leq p-1$, the induced map
	 \begin{multline*}
	 	H^{q+j+k}(Y,\Omega_{Y_{s-1}}^{p-j-k}(\ell-jd_{s-1}-j_{s} d_{s}-\dots-j_r d_r)\vert_Y)
	 	\rightarrow\\ H^{q+j+k+1}(Y,\Omega_{Y_{s-1}}^{p-j-k-1}(\ell-(j+1)d_{s-1}-j_{s} d_{s}-\dots-j_r d_r)\vert_Y)
	 \end{multline*}
	 is injective. Then the assumption 
	 \[H^{q+k}(Y,\Omega_{Y_{s-1}}^{p-k}(\ell-j_s d_s-\dots-j_r d_r)\vert_Y)\not=0.\]
	 implies
	 \[H^{q+p}(Y,\cO_Y(\ell-(p-k)d_{s-1}-j_s d_s-\dots-j_rd_r)\vert_Y)\not=0.\]
	 This completes the proof.
\end{proof}

Now we are in the position to prove Theorem \ref{Vanishing complete intersection}. The idea is to relate the cohomologies of $Y$ to the cohomologies of the ambient space $M$ using Lemma \ref{cohomology hypersurface to ambiant space} and the Koszul resolution.

\begin{proof}[Proof of Theorem \ref{Vanishing complete intersection}]
	As $M$ is a Fano manifold, by Kobayshi-Ochiai's theorem (see \cite{KobayashiOchiai1973}), we have $r_M\leq n+r+1$ with equality if and only if $M\cong\bbP^{n+r}$. On the other hand, if $M\cong\bbP^{n+r}$, thanks to \cite[Corollary 2.3.1]{Naruki1977/78}, under our assumption, we have $H^q(Y,\Omega_Y^p(\ell))\not=0$ if and only if $q=p$ and $\ell=0$. Hence the result holds if $r_M=n+r+1$. From now on, we shall assume that $r_M\leq n+r$. As a consequence, we have $r_Y\leq n-1$ since $d_i\geq 2$. Let us denote by $E$ the vector bundle $\cO_{M}(-d_1)\oplus\dots\oplus\cO_M(-d_r)$. Then we have the Koszul resolution of $\cO_Y$
	\begin{equation}\label{Koszul resolution}
		0\rightarrow \wedge^r E\rightarrow\wedge^{r-1} E\rightarrow\dots\rightarrow E\rightarrow\cO_M\rightarrow\cO_Y\rightarrow 0.
	\end{equation}
	Moreover, for any $1\leq j\leq r$, the vector bundle $\wedge^j E$ splits into a direct sum of line bundles as $\oplus_i\cO_M(-d_{ij})$ with $d_{ij}\geq 2j$.
	
	\textit{Proof of (1).} As $H^q(Y,\Omega_Y^p)\not=0$, thanks to Lemma \ref{cohomology hypersurface to ambiant space}, there exist integers $j_1,\dots,j_r\in\bbN$ such that $0\leq k\leq p-1$ where $k=j_1+\dots+j_r$ and such that we have either
	\begin{equation}\label{Cohom.Diff.l=0}
		H^{q+k}(Y,\Omega_M^{p-k}(-j_1 d_1-\dots-j_r d_r)\vert_Y)\not=0
	\end{equation}
	or
	\begin{equation}\label{Cohom.irreg.l=0}
		H^{q+p}(Y,\cO_Y(-pd_{s-1}-(d_r-d_{s-1})j_r-\dots-(d_s-d_{s-1})j_s)\not=0
	\end{equation}
	for some $2\leq s\leq r+1$. Note that since we have
	\begin{multline*}
		-pd_{s-1}-(d_r-d_{s-1})j_r-\dots-(d_s-d_{s-1})j_s\\
		=-j_s d_s-\dots-j_r d_r-(p-k)d_{s-1}\leq -d_{s-1}<0
	\end{multline*}
	and $1\leq q+p\leq n-1$, then Kodaira's vanishing theorem shows that \eqref{Cohom.irreg.l=0} is impossible. If \eqref{Cohom.Diff.l=0} holds, the second quadrant spectral sequence associated to the complex \eqref{Koszul resolution} twisting with $\Omega_M^{p-k}(-j_1 d_1-\dots-j_r d_r)$ implies that there exists an integer $0\leq j\leq r$ such that
	\[H^{q+k+j}(M,\Omega_M^{p-k}(-j_1-\dots-j_r)\otimes\wedge^j E)\not=0.\]
	As $q+j+p\leq n+r-1$ by assumption, then the Azizuki-Nakano vanishing theorem implies  
	\[-j_1-\dots-j_r-d_{ij}\geq 0\]
	for some $d_{ij}$. Since $d_{ij}>0$ if $j>0$ and $j_s\geq 0$ $(1\leq s\leq r)$, we obtain 
	\[j=j_1=\dots=j_r=0.\]
	Equivalently, we have $H^q(M,\Omega_M^p)\not=0$, and it follows that $p=q$.
	
	\textit{Proof of (2).} If $H^q(Y,\Omega_Y^p(\ell))\not=0$, by Lemma \ref{cohomology hypersurface to ambiant space} again, there exist integers $j_1,\dots,j_r\in\bbN$ with $0\leq k\leq p-1$ where $k=j_1+\dots+j_r$ such that we have either
	\begin{equation}\label{Cohom.Diff.l}
		H^{q+k}(Y,\Omega_M^{p-k}(\ell-j_1 d_1-\dots-j_r d_r)\vert_Y)\not=0
	\end{equation}
	or
	\begin{equation}\label{Cohom.irr.l}
		H^{q+p}(Y,\cO_Y(\ell-pd_{s-1}-(d_r-d_{s-1})j_r-\dots-(d_s-d_{s-1})j_s)\not=0
	\end{equation}
	for some $2\leq s\leq r+1$. 
	
	\textit{1st Case. \eqref{Cohom.Diff.l} holds.} In this case, the second quadrant spectral sequence associated to the complex \eqref{Koszul resolution} twisting with $\Omega_M^{p-k}(\ell-j_1 d_1-\dots-j_r d_r)$ implies that there exists an integer $0\leq j\leq r$ such that
	\[H^{q+k+j}(M,\Omega_M^{p-k}(\ell-j_1 d_1-\dots-j_r d_r)\otimes\wedge^j E)\not=0.\]
	As $q+j+p\leq n+r-1$, the Akizuki-Nakano vanishing theorem shows that we have
	\begin{equation}\label{AN.inequality}
	    \ell-j_1d_1-\dots-j_rd_r-d_{ij}\geq 0
	\end{equation}
	for all $i$. 
	
	If the equality in \eqref{AN.inequality} holds for all $d_{ij}$, by Theorem \ref{Vanishing complete intersection} (1), we get $q+k+j=p-k$. As a consequence,
	\begin{align*}
		n(\ell+q) & = n(j_1 d_1+\dots+j_r d_r+d_{ij}+p-2k-j)\\
		          & = n(p+(d_1-2)j_1+\dots+(d_r-2)j_r+d_{ij}-j)
	\end{align*}
	As $d_i\geq 2$, $d_{ij}\geq 2j$ and $r_Y\leq n-1$, we obtain $n(\ell+q)\geq np>r_Yp$. 
	
	If the inequality \eqref{AN.inequality} is strict for some $d_{ij}$, we can apply \cite[Theorem D]{BiswasChaputMourougane2015} to get
	\[\frac{\ell-j_1 d_1-\dots-j_r d_r-d_{ij}+q+k+j}{p-k}\geq \frac{r_M}{n+r}.\]
	As $d_{ij}\geq 2j$, $d_i\geq 2$, $r_M\leq n+r$, it follows that
	\begin{align*}
		\ell+q 
		&\geq p\frac{r_M}{n+r}+\left(d_1-1-\frac{r_M}{n+r}\right)j_1+\dots+\left(d_r-1-\frac{r_M}{n+r}\right)j_r+d_{ij}-j\\
		&\geq p\frac{r_M}{n+r}.
	\end{align*}
	Since $r_Y\leq n-2r$, it is easy to see $r_M/(n+r)>r_Y/n$, and we get the desired inequality.
	
	\textit{2nd Case. \eqref{Cohom.irr.l} holds.} As $1\leq q+p\leq n-1$, by Kodaira's vanishing theorem, we get
	\[\ell-pd_{s-1}-(d_r-d_{s-1})j_r-\dots-(d_s-d_{s-1})j_s\geq 0.\]
	As a consequence, we obtain
	\begin{align*}
		\ell+q & \geq pd_{s-1}+(d_r-d_{s-1})j_r+\dots+(d_s-d_{s-1})j_s+q\\
		       & \geq d_s j_s+\dots+d_r j_r+(p-k)d_{s-1}+q	  
	\end{align*}
	From the assumptions $p>k$ and $d_{s-1}\geq 2$, we get $\ell+q\geq 2p$. In particular, we have $n(\ell+q)\geq 2np>pr_Y$.
\end{proof}

\begin{remark}
	In view of our proof of Theorem \ref{Vanishing complete intersection} (1), we see that it holds even without the assumption $d_i\geq 2$. However, in the proof of Theorem \ref{Vanishing complete intersection} (2), the assumption $d_i\geq 2$ is necessary. 
\end{remark}

Theorem \ref{stability hypersurfaces of Hermitian symmetric spaces} is a direct consequence of Theorem \ref{Vanishing complete intersection}.

\begin{proof}[Proof of Theorem \ref{stability hypersurfaces of Hermitian symmetric spaces}]
	To prove the stability of $T_Y$, it is equivalent to prove the stability of $\Omega_Y^1$. Let $\cF\subset\Omega_Y^1$ be a nonzero proper subsheaf of rank $p$ ($1\leq p\leq n-1$). By Lefschetz hyperplane theorem and our assumption, we have $\pic(Y)\cong \bbZ\cO_Y(1)$, where $\cO_Y(1)=\cO_M(1)\vert_Y$. Thus, we could denote by $\ell$ the unique integer such that $\det(\cF)\cong\cO_Y(-\ell)$. Then we have $H^0(Y,\Omega_Y^p(\ell))\not=0$ by assumption. Since $p\leq n-1$, the Akizuki-Nakano vanishing theorem implies $\ell\geq 0$. As $p\geq 1$, Theorem \ref{Vanishing complete intersection} (1) implies $\ell>0$, and hence $\mu(\cF)<\mu(\Omega_Y^1)$ by Theorem \ref{Vanishing complete intersection} (2). In particular, it follows that $\Omega_Y^1$ is stable.
\end{proof}

\subsection{Effective restriction of tangent bundles}

In this subsection, we proceed to prove various effective restriction theorems for tangent bundles of complete intersections in irreducible Hermitian symmetric spaces of compact type. We use the standard cohomological arguments as in the proof of Theorem \ref{stability hypersurfaces of Hermitian symmetric spaces} to reduce the problem to the existence of twisted vector fields. 

\begin{prop}\label{reduce to exitence of vector fields}
	For $n\geq 3$ and $r\geq 1$, let $M$ be an $(n+r)$-dimensional irreducible Hermitian symmetric space of compact type. Let $Y=H_1\cap\dots\cap H_r$ be a smooth complete intersection of dimension $n$ with $H_i\in\vert\cO_M(d_i)\vert$ with $d_i\geq 2$. Let $X\in\vert \cO_Y(d)\vert$ be a smooth divisor. Assume that the composite of restrictions
	\[\pic(M)\rightarrow \pic(Y)\rightarrow \pic(X)\]
	is surjective. Moreover, if $Y$ is isomorphic to some smooth quadric hypersurface $Q^{n}$, we assume $d\geq 2$. Then the vector bundle $T_Y\vert_X$ is stable if and only if $H^0(X,T_Y(t)\vert_X)=0$ for any integer $t\leq -r_Y/n$, where $r_Y$ is the unique integer such that $\cO_Y(-K_Y)\cong\cO_Y(r_Y)$. 
\end{prop}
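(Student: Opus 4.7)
The plan is to prove the two implications separately. The forward direction is essentially tautological: a nonzero element $s \in H^0(X, T_Y(t)|_X)$ with $t \leq -r_Y/n$ yields an injective map $\mathcal{O}_X(-t) \hookrightarrow T_Y|_X$ (injective since $T_Y|_X$ is torsion-free). Because the composite $\pic(M) \to \pic(X)$ is surjective and $\mathcal{O}_X(1)$ is ample, one has $\pic(X) = \mathbb{Z}\,\mathcal{O}_X(1)$, so the saturation of the image is a line bundle $\mathcal{O}_X(k)$ with $k \geq -t \geq r_Y/n = \mu(T_Y|_X)$, contradicting stability.

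For the converse, let $\mathcal{F} \subset T_Y|_X$ be a saturated destabilizing subsheaf of rank $p \in [1, n-1]$. Since $\mathcal{F}$ is reflexive and $\pic(X) = \mathbb{Z}\,\mathcal{O}_X(1)$, one may write $\det\mathcal{F} = \mathcal{O}_X(a)$; the destabilizing condition reads $a \geq p r_Y/n$. Using the canonical isomorphism $\wedge^p T_Y \cong \Omega_Y^{n-p}(r_Y)$, the inclusion $\det\mathcal{F} \hookrightarrow \wedge^p T_Y|_X$ produces a nonzero element of $H^0(X, \Omega_Y^{p'}(\ell')|_X)$, where $p' := n - p$ and $\ell' := r_Y - a \leq p' r_Y/n$. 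If $p = 1$ we are done immediately, since the inclusion $\mathcal{O}_X(a) \hookrightarrow T_Y|_X$ is itself a nonzero section of $T_Y(-a)|_X$ with integer $-a \leq -r_Y/n$.

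So I would assume $p \geq 2$, i.e.\ $1 \leq p' \leq n-2$, and derive a contradiction via the short exact sequence
\[0 \to \Omega_Y^{p'}(\ell' - d) \to \Omega_Y^{p'}(\ell') \to \Omega_Y^{p'}(\ell')|_X \to 0.\]
The nonvanishing of $H^0(X, \Omega_Y^{p'}(\ell')|_X)$ forces one of $H^0(Y, \Omega_Y^{p'}(\ell'))$ or $H^1(Y, \Omega_Y^{p'}(\ell' - d))$ to be nonzero. Since $1 \leq p' \leq n-2$, Theorem \ref{Vanishing complete intersection} applies to both groups, and the bounds $\ell' \leq p' r_Y/n$ together with $d \geq 2$ force both to vanish---except for the edge case $p' = 1$, $\ell' = d$, which comes from $H^1(Y, \Omega_Y^1) \cong \mathbb{C}$.

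The remaining step, which I expect to be the main subtlety, is to exclude this exceptional case. It translates to $nd \leq r_Y$. Kobayashi-Ochiai's bound $r_M \leq n+r+1$ combined with $d_i \geq 2$ yields $r_Y \leq n-r+1 \leq n$, and the equality $r_Y = n$ forces $r = 1$, $M \cong \mathbb{P}^{n+1}$, $d_1 = 2$, hence $Y \cong Q^n$. Therefore $nd \leq r_Y$ can only hold when $d = 1$ and $Y \cong Q^n$, which is precisely the configuration excluded by the standing hypothesis. Geometrically this edge case corresponds to the normal-bundle quotient $T_Y|_X \to N_{X/Y}$ destabilizing $T_Y|_X$, which is exactly why the hypothesis $d \geq 2$ is imposed when $Y \cong Q^n$.
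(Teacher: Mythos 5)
Your proof is correct and follows essentially the same route as the paper's: the forward direction is immediate from the definition, and the converse reduces a destabilizing subsheaf to a nonzero section of $H^0(X,\Omega_Y^{p'}(\ell')\vert_X)$, kills it via the restriction sequence and Theorem \ref{Vanishing complete intersection}, handles the rank-extreme case directly by the hypothesis on twisted vector fields, and excludes the edge case $p'=1$, $\ell'=d$ by Kobayashi--Ochiai together with the $d\geq 2$ assumption for $Y\cong Q^n$. The only cosmetic difference is that you work with subsheaves of $T_Y\vert_X$ and the isomorphism $\wedge^pT_Y\cong\Omega_Y^{n-p}(r_Y)$, whereas the paper works dually with subsheaves of $\Omega_Y^1\vert_X$ and passes to the rank-one quotient in the extreme case; the two are equivalent.
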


\begin{proof}
	The "only if" implication follows from the definition of stability. Now we assume $H^0(X,T_Y(t)\vert_X)=0$ for any $t\leq -r_Y/n$. Note that $T_Y\vert_X$ is stable if and only if $\Omega_Y^1\vert_X$ is stable. Let $\cF$ be a proper subsheaf of $\Omega_Y^1\vert_X$ of rank $p$. After replacing $\cF$ by its saturation in $\Omega_Y^1\vert_X$, we may assume that $\cF$ is saturated. We denote by $\ell$ the unique integer such that $\det(\cF)=\cO_X(-\ell)$. Then, by assumption, we get $H^0(X,\Omega_Y^p(\ell)\vert_X)\not=0$. To prove the stability of $\Omega_Y^1\vert_X$, it suffices to show that the following inequality
	\[\mu(\cF)=\frac{-\ell}{p}\cO_X(1)^{n-1}<\mu(\Omega_Y^1\vert_X)=\frac{-r_Y}{n}\cO_X(1)^{n-1}\]
	holds for all pairs of integers $(\ell,p)$ such that $H^0(X,\Omega_Y^p(\ell)\vert_X)\not=0$ and $1\leq p\leq n-1$. We consider the following exact sequence
	\[0\rightarrow\Omega_Y^p(\ell-d)\rightarrow\Omega_Y^p(\ell)\rightarrow \Omega_Y^p(\ell)\vert_X\rightarrow 0.\]
	As $1\leq p\leq n-1$ and $n\geq 3$, by Theorem \ref{Vanishing complete intersection}, $H^0(Y,\Omega_Y^p(\ell))\not=0$ implies $n\ell>pr_Y$. Thus we may assume $H^0(Y,\Omega_Y^p(\ell))=0$. Then the fact $H^0(X,\Omega_Y^p(\ell)\vert_X)\not=0$ implies $H^1(Y,\Omega_Y^p(\ell-d))\not=0$. 
	
	\textit{1st Case. $p\leq n-2$.} As $p\geq 1$, if $\ell\not=d$, by Theorem \ref{Vanishing complete intersection} (2), we have 
	\[n\ell\geq n(\ell-d+1)>pr_Y.\]
	If $\ell=d$, then we have $p=1$ by Theorem \ref{Vanishing complete intersection} (1). As a consequence, we get
	\begin{equation}\label{Vect.Fields.Ineq.}
		n\ell=nd>pr_Y=r_Y
	\end{equation}
	unless $d=1$ and $r_Y\geq n$. If $r_Y\geq n$, by Kobayashi-Ochiai theorem, $Y$ is isomorphic to either $\bbP^n$ or $Q^n$. As $d_i\geq 2$, then $Y$ is actually isomorphic to $Q^n$. However, by our assumption, if $Y\cong Q^n$, then we have $d\geq 2$. Hence the inequality \eqref{Vect.Fields.Ineq.} still holds in this case.
	
	\textit{2nd Case. $p=n-1$.} We denote by $\cQ$ the quotient $\left(\Omega_Y^1\vert_X\right)/\cF$. Since $\cF$ is saturated, the quotient $\cQ$ is a torsion-free coherent sheaf of rank one such that $\det(\cQ)=\cQ^{**}\cong\cO_X(-r_Y+\ell)$. Since $\cQ^{*}$ is a subsheaf of $T_Y\vert_X$, we get
	\[H^0(X,T_Y(\ell-r_Y)\vert_X)\not=0.\]
	By our assumption, we get $\ell-r_Y>-r_Y/n$. As a consequence, we get 
	\[\frac{\ell}{p}=\frac{\ell}{n-1}>\frac{r_Y}{n}.\]
	We have thus proved the proposition.
\end{proof}

As an application of Proposition \ref{reduce to exitence of vector fields}, we can derive Theorem \ref{Effective restriction HSS} by the nonexistence of global twisted vector fields.

\begin{proof}[Proof of Theorem \ref{Effective restriction HSS}]
	Let $X$ be a projective manifold of dimension $N\geq 2$, and let $L$ be an ample line bundle. Recall that $H^0(X,T_X\otimes L^{-1})\not=0$ if and only if $X\cong\bbP^N$ and $L\cong\cO_{\bbP^N}(1)$ (cf. \cite{Wahl1983}). In particular, if $M$ is not isomorphic to a projective space, then we have $H^0(M,T_M(t))=0$ for any integer $t<0$. 
	
	\textit{Proof of (1).} Under our assumption, by Theorem \ref{surjectivity twisted global vector fields}, the natural restriction map 
	\[\rho_t\colon H^0(M,T_M(t))\rightarrow H^0(X,T_M(t)\vert_X)\]
	is surjective for all $t\in\bbZ$. In particular, we have $H^0(X,T_M(t)\vert_X)=0$ for all $t<0$. This implies $H^0(X,T_Y(t)\vert_X)=0$ for all $t<0$ since $H^0(X,T_Y(t)\vert_X)$ is a subgroup of $H^0(X,T_M(t)\vert_X)$. As $Y$ is Fano, we have $r_Y>0$ and we conclude by Proposition \ref{reduce to exitence of vector fields}.
	
	\textit{Proof of (2).} By Theorem \ref{Surjective complete intersection Projective spaces}, the natural restriction map $\rho_t\colon H^0(Y_1,T_{Y_1}(t))\rightarrow H^0(X,T_{Y_1}(t)\vert_X)$ is surjective for all $t\leq -1$ if $d\geq d_1$. In particular, it follows that we have $H^0(X,T_{Y_1}(t)\vert_{X})=0$ for all $t\leq -1$ if $d\geq d_1$. Again, since $Y$ is Fano, we have $r_Y>0$ and the result is again a consequence of Proposition \ref{reduce to exitence of vector fields}.
	
	\textit{Proof of (3).} If $M$ is isomorphic to any smooth quadric hypersurface $Q^{n+r}$ and $Y\subset M$ is a complete intersection of degree $(d_1,\dots,d_r)$ such that $d_i\geq 2$ for all $1\leq i\leq r$. Then $Y$ is also a complete intersection in $\bbP^{n+r+1}$ of degree $(2,d_1,\dots,d_r)$ and the result follows immediately from (2).
	
	\textit{Proof of (4).} Since every smooth complete intersection in a quadric hypersurface is also a smooth complete intersection in some projective space, we may assume that $M$ is not isomorphic to any smooth quadric hypersurface. Note that we have $d>d_1-r_Y/n\geq 1$ as $d_1\geq 2$ and $r_Y\leq n$. Thus, by Proposition \ref{reduce to exitence of vector fields}, it suffices to show that 
	\[H^0(X,T_Y(t)\vert_X)=0\qquad \textrm{for}\qquad t\leq -\frac{r_Y}{n}.\] 
	Since $M$ is not isomorphic to any smooth quadric hypersurface, by Theorem \ref{surjectivity twisted global vector fields}, the natural restriction map
	\[H^0(M,T_M(t))\rightarrow H^0(Y,T_M(t)\vert_Y)\rightarrow H^0(X,T_M(t)\vert_X)\]
	is surjective for all $t\in\bbZ$. Let $\sigma\in H^0(X,T_Y(t)\vert_X)$ be a global section. Then $\sigma$ is also a global section of $T_M(t)\vert_X$. Therefore there exists a global twisted vector field $\widetilde{\sigma}\in H^0(M,T_M(t))$ such that $\widetilde{\sigma}\vert_X=\sigma$. Denote by $Y_j$ $(1\leq j\leq r)$ the scheme-theoretic complete intersection $H_1\cap\dots\cap H_j$. Then each $Y_j$ is smooth in a neighborhood of $Y$, and $\widetilde{\sigma}\vert_X=\sigma$ is a global section of $T_{Y_j}(t)\vert_X$ for all $1\leq j\leq r$. Consider the following exact sequence
	\[0\rightarrow T_{Y_j}(t)\vert_Y\rightarrow T_{Y_{j-1}}(t)\vert_Y\xrightarrow{\beta_j(t)} \cO_Y(d_j+t)\rightarrow 0.\]
	Then $\sigma\in H^0(X,T_Y(t)\vert_X)$ implies that the image $\widehat{\beta}_j(t)(\widetilde{\sigma}\vert_{Y})$ vanishes over $X$, where $\widehat{\beta}_j(t)$ is the induced map
	\[H^0(Y,T_{Y_{j-1}}(t)\vert_Y)\longrightarrow H^0(Y,\cO_Y(d_j+t)).\]
	However, since $X$ is general and $d>d_j+t$ for any $1\leq j\leq r$ by our assumption, we get $\widehat{\beta}_j(t)(\widetilde{\sigma}\vert_Y)=0$ for any $1\leq j\leq r$. Therefore we have $\widetilde{\sigma}\vert_Y\in H^0(Y,T_{Y_j}(t)\vert_Y)$ for any $1\leq j\leq r$, i.e., $\widetilde{\sigma}\vert_Y\in H^0(Y,T_Y(t))$. On the other hand, since $T_Y$ is stable (cf. Theorem \ref{stability hypersurfaces of Hermitian symmetric spaces}), we have 
	\[H^0(Y,T_Y(t))=0\qquad \textrm{for}\qquad t\leq -\frac{r_Y}{n}.\] 
	Hence, we obtain $\widetilde{\sigma}\vert_{Y}=0$ and consequently $\sigma=0$.
\end{proof}

Though the statements (2), (3) and (4) in the theorem are not optimal, they have the advantage to give a lower bound which is quite easy to compute. The next theorem deals with the case in which $Y$ is a general smooth hypersurface of $\bbP^{n+1}$. The proof of this theorem can be completed by combining Theorem \ref{Globalsections} and the method analogous to that used above.

\begin{proof}[Proof of Theorem \ref{Restrition Invariant Picard Group}]
	If $Y$ is isomorphic to either $\bbP^n$ or $Q^n$, it follows from \cite[Theorem A]{BiswasChaputMourougane2015}. So we shall assume that $Y$ is a general smooth hypersurface defined by a homogeneous polynomial $h$ of degree $d_h\geq 3$. By Proposition \ref{reduce to exitence of vector fields}, it is enough to prove that $H^0(X,T_Y(t)\vert_X)=0$ for $t\leq -r_Y/n$. As $n\geq 3$,  $d_h\geq 3$ and $r_Y=n+2-d_h$, we have
	\begin{align*}
	\left(\frac{\rho+d}{2}-d_h\right)-\left(-\frac{r_Y}{n}\right) & \geq \frac{(d_h-2)(n+2)+d}{2}-d_h-\left(\frac{d_h}{n}-\frac{n+2}{n}\right) \\
	& \geq \left(\frac{n}{2}-\frac{1}{n}\right)d_h-n-\frac{1}{2}+\frac{2}{n}\\
	&\geq \frac{3n}{2}-\frac{3}{n}-n-\frac{1}{2}+\frac{2}{n}\\
	&>0.
	\end{align*}
	This implies $-r_Y/n<(\rho+d)/2-d_h$. According to Theorem \ref{Globalsections}, the map
	\[H^0(Y,T_Y(t))\rightarrow H^0(X,T_Y(t)\vert_X)\]
	is surjective for $t\leq -r_Y/n$. The theorem is now a direct consequence of the stability of $T_Y$.
\end{proof}

\begin{remark}
	In Theorem \ref{Restrition Invariant Picard Group}, if $Y$ is an arbitrary smooth hypersurface, then the argument above does not work, since the strong Lefschetz property (SLP) of Milnor algebras of smooth hypersurfaces is still open.
\end{remark}

\section{Hyperplane of cubic threefolds}\label{Section Hyperplane of cubic threefolds}

In this section, we consider the case where the map $\pic(Y)\rightarrow\pic(X)$ is not surjective. By Noether-Lefschetz theorem mentioned in the introduction, this happens if $X$ is a quadric section of a quadric threefold $Q^3$, or $X$ is a quadric surface in $\bbP^3$, or $X$ is a cubic surface in $\bbP^3$. In these cases, $X$ is always a del Pezzo surface, i.e., the anti-canonical divisor $-K_X$ is ample. 

\subsection{Projective one forms}We denote by $\pi\colon S_r\rightarrow \bbP^2$ the surface obtained by blowing-up $\bbP^2$ at $r (\leq 8)$ points $p_1,\dots,p_r$ in general position and denote by $E_j$ the exceptional divisor over $p_j$. Then $S_r$ is a del Pezzo surface with degree $K_S^2=9-r$. It is well-known that the cotangent bundle $\Omega_{S_r}^1$ is stable with respect to the anti-canonical polarization $-K_{S_r}$ for $r\geq 2$ (cf. \cite{Fahlaoui1989}). There is a one-to-one correspondence between the saturated rank one subsheaves of $\Omega_{S_r}^1$ and the global sections of $\Omega_{\bbP^2}^1(a)$ which vanish only in codimension two. The global sections of $\Omega_{\bbP^2}^1(a)$ are usually called \textit{projective one forms}. In the following we give a brief description of this correspondence and we refer the reader to \cite{Fahlaoui1989} for further details. 

On one hand, let $L$ be a saturated rank one subsheaf of the cotangent bundle of $\Omega_{S_r}^1$. Then we have
\[c_1(L)=-aE_0-\sum_{j=1}^{9-r} b_j E_j\]
for some integers $a,b_j\in\bbZ$, where $E_0$ is the pull-back of a line in $\bbP^2$. The global section of $\Omega_{\bbP^2}^1(a)$ associated to $L$ is defined by the following map
\begin{multline*}
H^0(S_r,\Omega_{S_r}^1\otimes L^{-1}) \hookrightarrow H^0\left(S_r\setminus\cup E_i,\Omega_{S_r}^1\otimes L^{-1}\vert_{S_r\setminus\cup E_i}\right) \\
\cong H^0\left(\bbP^2\setminus\cup\{p_i\},\Omega_{\bbP^2}^1(a)\vert_{\bbP^2\setminus\cup\{p_i\}}\right)=H^0(\bbP^2,\Omega_{\bbP^2}^1(a)).
\end{multline*}

On the other hand, let $\omega\in H^0(\bbP^2,\Omega_{\bbP^2}^1(a))$ be a global section vanishing only in codimension two. Then $\omega$ can be identified with a rational global section of $\Omega_{\bbP^2}^1$ with pole supported on a line $T$. Let $E_0$ be the pull-back of $T$ by $\pi$. Then $\pi^*\omega$ is a global section of $\Omega_{S_r}^1\otimes\cO_{S_r}(aE_0)$. Let $\textrm{div}(\pi^*\omega)$ be the divisor defined by the zeros of $\pi^*\omega$. The saturated rank one subsheaf associated to $\omega$ is defined to the image of the induced morphism $\cO_{S_r}(-aE_0+\textrm{div}(\pi^*\omega))\rightarrow\Omega_{S_r}^1$. 

\begin{example}\label{Examples} We recall several examples given in \cite{Fahlaoui1989}.
	\begin{enumerate}[label=(\arabic*)]
		\item The form $\omega=x_0dx_1-x_1dx_0\in H^0(\bbP^2,\Omega^1_{\bbP^2}(2))$ defines a saturated subsheaf of $\Omega_{S_r}^1$ which is isomorphic to $\cO_{S_r}(-2E_0+2E_j)$, where $E_j$ is the exceptional divisor above $[0:0:1]$. Moreover, the only rank one saturated subsheaves of $\Omega_{S_r}^1$ with $a=2$ are $\cO_{S_r}(-2E_0+2E_j)$ and $\cO_{S_r}(-2E_0)$.
		
		\item We choose four points $p_1=[1:0:0]$, $p_2=[0:1:0]$, $p_3=[0:0:1]$ and $p_4=[1:1:1]$ in $\bbP^2$. Then the form defined by
		\[\omega=(x_1^2 x_2-x_2^2 x_1)dx_0+(x_2^2 x_0-x_0^2 x_2)dx_1+(x_0^2 x_1-x_1^2 x_0)dx_2\]
		induces a saturated subsheaf $L$ of $\Omega_{S_r}^1$ such that $c_1(L)=-4E_0+2\sum_{j=1}^4 E_j$, and there does not exist a subsheaf $L'$ of $\Omega_{S_r}^1$ such that 
		\[c_1(L')=-4E_0+2\sum_{j=1}^4 E_j+ E_i\]
		for $5\leq i\leq r$. In fact, let $\omega'$ be the corresponding projective one form of $L'$. Since $L$ is a subsheaf of $L'$, $\pi^*\omega'/\pi^*\omega$ can be identified to be a global section of $L'\otimes L^*$ whose zeros are supported on $E_i$. In particular, $\omega$ is actually proportional to $\omega'$. Nevertheless, the zeros of $\omega$ are $p_1,\dots,p_4$ and the points $[0:1:1]$, $[1:0:1]$ and $[1:1:0]$. Since there are at most four points of these points which are in general position, the exceptional line $E_i$ cannot be a zero of $\pi^*\omega'$.
	\end{enumerate}
\end{example}

In order to prove Theorem \ref{Semi-stability Cubic}, we need the following lemma due to Fahlaoui.

\begin{lemma}\cite[Lemme 1]{Fahlaoui1989}\label{Existence of section}
	Let $L$, $L'$ be two saturated subsheaves of $\Omega_{S_r}^1$. If $L$ is not isomorphic to $L'$, then we have $h^0(S_r,\cO_{S_r}(K_{S_r})\otimes L^{*}\otimes L'^{*})\geq 1$.
\end{lemma}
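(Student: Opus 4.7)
The plan is to produce an explicit nonzero morphism $L\otimes L'\to \cO_{S_r}(K_{S_r})$, which is the same datum as a nonzero global section of $\cO_{S_r}(K_{S_r})\otimes L^{*}\otimes L'^{*}$. The natural candidate is the wedge product. Viewing $L$ and $L'$ as subsheaves of $\Omega_{S_r}^1$ via their saturated embeddings, I would consider the composition
\[
\psi\colon L\otimes L'\ \hookrightarrow\ \Omega_{S_r}^1\otimes\Omega_{S_r}^1\ \xrightarrow{\ \wedge\ }\ \wedge^2\Omega_{S_r}^1\ =\ \cO_{S_r}(K_{S_r}).
\]
The whole content of the lemma then reduces to showing $\psi\not\equiv 0$.

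To verify this, I would argue at the generic point $\eta$ of $S_r$. The stalks $L_\eta$ and $L'_\eta$ are one-dimensional subspaces of the two-dimensional vector space $(\Omega_{S_r}^1)_\eta$, and the wedge map is zero at $\eta$ if and only if these two subspaces coincide. So assume for contradiction that $\psi=0$; then $L_\eta=L'_\eta$ inside $(\Omega_{S_r}^1)_\eta$. Because $L$ and $L'$ are both saturated rank one subsheaves of the locally free sheaf $\Omega_{S_r}^1$ and they agree at the generic point, the standard saturation argument forces them to agree as subsheaves: indeed $L+L'\subset\Omega_{S_r}^1$ is again a rank one subsheaf with the same generic fiber, and the inclusions $L,L'\hookrightarrow L+L'$ are isomorphisms in codimension one, hence isomorphisms by saturation. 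This would give $L=L'$ as subsheaves, in particular $L\cong L'$, contradicting the hypothesis.

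Therefore $\psi$ is a nonzero morphism $L\otimes L'\to\cO_{S_r}(K_{S_r})$, and by adjunction it corresponds to a nonzero element of
\[
\mathrm{Hom}_{\cO_{S_r}}\!\bigl(L\otimes L',\,\cO_{S_r}(K_{S_r})\bigr)\ =\ H^0\!\bigl(S_r,\,\cO_{S_r}(K_{S_r})\otimes L^{*}\otimes L'^{*}\bigr),
\]
which gives $h^0(S_r,\cO_{S_r}(K_{S_r})\otimes L^{*}\otimes L'^{*})\geq 1$. The only subtle point is the saturation step, but it is a formal consequence of the definition of saturated subsheaf together with the fact that on a smooth surface, two saturated rank one subsheaves of a torsion-free sheaf with equal generic fiber must coincide; no further ingredients are needed, and the rest of the argument is the single line that the wedge product of two distinct one-dimensional subspaces of a two-dimensional vector space is nonzero.
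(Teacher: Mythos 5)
Your argument is correct. Note that the paper does not reprove this statement at all --- it is quoted directly as Lemme~1 of the cited work of Fahlaoui --- so there is no in-paper proof to compare against; your wedge-product argument is the standard (and, as far as I recall, the original) one: the composition $L\otimes L'\hookrightarrow\Omega_{S_r}^1\otimes\Omega_{S_r}^1\xrightarrow{\wedge}\wedge^2\Omega_{S_r}^1\cong\cO_{S_r}(K_{S_r})$ is generically nonzero precisely because two distinct saturated rank-one subsheaves of a rank-two locally free sheaf cannot share their generic fibre (the saturation of a rank-one subsheaf is determined by its generic fibre, being the kernel of $\Omega_{S_r}^1\to(\Omega_{S_r}^1/L)/\mathrm{torsion}$), and a generically nonzero map of line bundles is a nonzero section of the corresponding $\mathcal{H}om$ bundle. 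The one phrase worth tightening is ``isomorphisms in codimension one, hence isomorphisms by saturation'': the cleaner statement is that $(L+L')/L$ is a torsion subsheaf of the torsion-free quotient $\Omega_{S_r}^1/L$, hence vanishes, so $L'=L+L'=L$.
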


\subsection{Subsheaves of cotangent bundles of cubic surfaces} A cubic surface $S\subset\bbP^3$ is a blow-up $\pi\colon S\rightarrow\bbP^2$ of six points $p_j$ on $\bbP^2$ in general position. The exceptional divisor $\pi^{-1}(p_j)$ is denoted by $E_j$. Let $K_S$ be the canonical divisor of $S$ and $E_0$ the pull-back of a line in $\bbP^2$. Then we have 
\[-K_S=3E_0-\sum_{j=1}^6 E_j\sim H\vert_S,\]
where $H\in\vert\cO_{\bbP^3}(1)\vert$ is a hyperplane in $\bbP^3$. Let us recall the following well-known classical result of cubic surfaces.

\begin{itemize}
	\item There are exactly $27$ lines lying over a cubic surface: the exceptional divisors $E_j$ above the six blown up points $p_j$, the proper transforms of the fifteen lines in $\bbP^2$ which join two of the blown up points $p_j$, and the proper transforms of the six conics in $\bbP^2$ which contain all but one of the blown up points.
\end{itemize}

The following result gives an upper bound for the degree of the saturated subsheaves of $\Omega_S^1$. 

\begin{prop}\label{Invertible subsheaf}
	Let $S$ be a cubic surface and let $L\subset \Omega_S^1$ be a saturated invertible subsheaf. Then we have
	\[c_1(L)\cdot (-K_{S})\leq -3.\]
\end{prop}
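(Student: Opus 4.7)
The plan is to exploit the correspondence of the previous subsection between saturated rank-one subsheaves of $\Omega_S^1$ and projective one forms on $\bbP^2$. Under this correspondence, $L$ is the line bundle image of $\omega\in H^0(\bbP^2,\Omega_{\bbP^2}^1(a))$ for some $a\ge 2$ (forced by the nonvanishing of $H^0(\Omega_{\bbP^2}^1(a))$), and
\[c_1(L)\;=\;-aE_0+\sum_{j=1}^{6}m_j E_j,\qquad m_j\;=\;\mathrm{ord}_{E_j}(\pi^*\omega)\;\ge\;0.\]
Since $E_0^2=1$, $E_j^2=-1$, $E_0\cdot E_j=0$, $E_i\cdot E_j=0$ for $i\neq j$, and $-K_S=3E_0-\sum_j E_j$, a direct intersection computation gives $c_1(L)\cdot(-K_S)=-3a+\sum_j m_j$, so the proposition reduces to the numerical inequality $\sum_{j=1}^{6}m_j\le 3(a-1)$.

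First I would extract the local information. Writing $\omega=f\,du+g\,dv$ in an affine chart centered at $p_j$ and passing to blow-up coordinates $(u,t)$ with $v=ut$, one computes
\[\pi^*\omega\;=\;\bigl(f(u,ut)+t\,g(u,ut)\bigr)du\,+\,u\,g(u,ut)\,dt,\]
and verifies that if $\omega$ vanishes to order $k_j$ at $p_j$ then $m_j\in\{k_j,\,k_j+1\}$, the value $k_j+1$ occurring precisely when the leading $k_j$-jet of $\omega$ at $p_j$ lies in the one-dimensional subspace spanned by the radial form $v\,du-u\,dv$---equivalently, when $\pi^*\cF$ is dicritical at $p_j$. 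Together with $\ell_j\ge k_j^2$ for $\ell_j:=\dim_{\bbC}\cO_{\bbP^2,p_j}/(\omega)$ and the global bound $\sum_j\ell_j\le c_2(\Omega_{\bbP^2}^1(a))=a^2-3a+3$, this immediately disposes of $a=2$ (unique simple zero, $\sum_j m_j\le 2$) and $a=3$ (each $k_j\le 1$ forces $\sum_j m_j\le 2\cdot 3=6$).

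For $a\ge 4$ I would combine the above with the Chern class identity obtained by factoring out the divisorial zero of $\pi^*\omega$: the residual section $\hat{\sigma}\in H^0(\Omega_S^1(aE_0-\sum_j m_j E_j))$ vanishes only in codimension two, so
\[c_2\bigl(\Omega_S^1(aE_0-\textstyle\sum_j m_j E_j)\bigr)\;=\;a^2-3a+9+\sum_j m_j-\sum_j m_j^2\;\ge\;0.\]
The extremal configurations attaining $\sum_j m_j>3(a-1)$ correspond to $\omega$ having many simple radial zeros among the $p_j$'s; the geometric content is that if $\omega$ has simple radial zeros at four of the $p_j$'s in general position, then the leading $(v\,du-u\,dv)$-behaviour at each forces $\omega$ to arise from a pencil of conics whose base locus is those four points. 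Any further $p_{j'}$ that is also radial for $\omega$ would have to be a fifth base point of the pencil, contradicting B\'ezout; and any further $p_{j'}$ contributing a non-radial simple zero must coincide with a singular point of one of the three singular pencil members, forcing $p_{j'}$ to lie on a line joining two of the base points $p_{i},p_{i'}$, which is excluded by the general-position hypothesis for the six points.

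The main obstacle is extending this pencil-based argument cleanly for every $a\ge 4$: one has to organize the case analysis allocating the $m_j$ among radial and non-radial contributions (constrained by the inequality $\sum m_j^2\le a^2-3a+9+\sum m_j$), and to show that in each case the compatibility with ``no three of $p_1,\dots,p_6$ collinear'' and ``no six on a conic'' forces $\sum_j m_j\le 3(a-1)$. Once this combinatorial-geometric step is carried out, the bound $c_1(L)\cdot(-K_S)\le -3$ follows.
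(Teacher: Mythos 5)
Your reduction is correct: writing $c_1(L)=-aE_0+\sum_j m_jE_j$ with $m_j=\mathrm{ord}_{E_j}(\pi^*\omega)\ge 0$, the statement is equivalent to $\sum_j m_j\le 3(a-1)$, your local computation $m_j\in\{k_j,k_j+1\}$ is right, and the cases $a=2,3$ do follow from $c_2(\Omega^1_{\bbP^2}(a))=a^2-3a+3$. But the proof stops exactly where the difficulty begins. For $a\ge 4$ the purely numerical constraints do not suffice (e.g.\ $a=4$ with all six $k_j=1$ and all six points dicritical satisfies $\sum k_j^2=6\le 7$ and $c_2(\Omega^1_S(aE_0-\sum m_jE_j))=1\ge 0$, yet gives $\sum m_j=12>9$), so everything hinges on the ``pencil-based'' step you describe only heuristically. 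That step is not established: the rigidity claim that four simple radial zeros of $\omega$ at points in general position force $\omega$ to come from a pencil of conics is a nontrivial statement about degree-$2$ foliations, and in the form you state it (for a general twist $a$) it is false --- for $a>4$ a form with four simple radial zeros need not arise from any conic pencil, since conic pencils only produce sections of $\Omega^1_{\bbP^2}(4)$. You would at minimum have to organize the argument by the value of $a$ (Cauchy--Schwarz on $\sum k_j\le\sqrt{6\sum k_j^2}$ does close the range $a\ge 10$, but $4\le a\le 9$ remains) and then supply genuine classification results for the low-degree foliations that occur; none of this is done, and you acknowledge as much.

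For comparison, the paper avoids this open-ended analysis entirely: stability of $\Omega_S^1$ (with $\mu(\Omega_S^1)=-3/2$) gives $c_1(L)\cdot(-K_S)\le -2$ for free, so only the single value $c_1(L)\cdot(-K_S)=-2$ must be excluded. Fahlaoui's Lemma, applied to $L$ and the known subsheaves $\cO_S(-2E_0+2E_i)$, produces effective divisors of anticanonical degree $3$ on $S$, and the classification of the $27$ lines then pins the coefficients down to the unique class $-4E_0+2(E_1+\cdots+E_4)+E_5+E_6$, which is ruled out by the explicit projective one form of Example (2). If you want to salvage your approach, you should at least import the stability input to reduce to $\sum_j m_j\in\{3a-2,3a-1,3a\}$ before attempting any geometry; as written, the argument has a genuine gap.
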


\begin{proof}
	Note that since $\mu(\Omega_S^1)=-3/2$ and $\Omega_S^1$ is stable, we get $c_1(L)\cdot(-K_S)\leq -2$. Suppose that we have $c_1(L)\cdot K_S=2$ for some $L$. Then we have 
	\[c_1(L)=-aE_0-\sum_{j=1}^6 b_j E_j\]
	for some $a,b_j\in\bbZ$ with $a\geq 2$. If $a=2$, then $L$ is isomorphic to $\cO_S(-2E_0)$ or some $\cO_S(-2E_0+2E_i)$. In the former case we have $c_1(L)\cdot K_S=6$ and in the latter case we have $c_1(L)\cdot K_S=4$. So we need only consider the case $a\geq 3$. For fixed $i$, applying Lemma \ref{Existence of section} to $L$ and $-2E_0+2E_i$, we obtain an effective divisors $C_i$ such that
	\[C_i\sim K_S-L-(-2E_0+2E_i)=(a-1)E_0+(b_i-1)E_i+\sum_{j\not=i}(b_j+1)E_j.\]
	Denote by $d=-K_S\cdot C_i=3a+\sum_{j=1}^6 b_j+1$ the degree of $C_i$. By the assumption $c_1(L)\cdot K_S=2$, we obtain $3a+\sum_{j=1}^6 b_j=2$, and hence $d=3$. Moreover, as $a\geq 3$, we have $\sum_{j=1}^6 b_j\leq -7$, and consequently there is at least one $b_j\leq -2$. The proof will be divided into three steps.
	
	\textit{Step 1. We will show $b_j\geq -2$ for all $1\leq j\leq 6$.} There exist some $\pi$-exceptional effective divisors $\sum_{j=1}^6 c_{ij}E_j$ such that the effective divisors $C'_i$ defined as
	\[C'_i=C_i-\sum_{j=1}^6 c_{ij}E_j\sim(a-1)E_0+(b_i-c_{ii}-1)E_i+\sum_{j\not=i}(b_j-c_{ij}+1)E_j\]
	don't contain $\pi$-exceptional components. We denote the integer $b_j-c_{ij}$ by $b_{ij}$ and denote the degree $-K_S\cdot C'_i$ of $C'_i$ by $d'_i$, then we have 
	\begin{equation}\label{Sub b}
	b_{ij}\leq b_j\ \ \textrm{and}\ \  d'_i\leq d.
	\end{equation}
	Since the exceptional divisor $E_i$ is a line on $S$ and $-K_S\sim H\vert_X$ for some hyperplane $H\subset\bbP^3$, we have $\bs\vert-K_S-E_i\vert\subset E_i$. Moreover, since $C'_i$ does not contain $E_i$, we obtain
	\begin{equation}\label{Inequality bii}
	(-K_S-E_i)\cdot C'_i\geq 0\ \ \textrm{and}\ \ -b_{ii}+1=C'_i\cdot E_i\leq -K_S\cdot C'_i=d'_i.
	\end{equation}
	Combining \eqref{Sub b} and \eqref{Inequality bii} gives
	\begin{equation}\label{Inequality of b}
	-b_i\leq -b_{ii}\leq d'_i-1\leq d-1=2.
	\end{equation} 
	Since $i$ is arbitrary, we deduce that $b_i\geq -2$ for $i=1,\dots, 6$.
	
	\textit{Step 2. We show $b_j\leq-1$ for all $1\leq j\leq 6$ and $\sum_{j=1}^6 b_j\leq -8$.} Since there is at least one $b_j\leq -2$ and $b_i\geq -2$ for all $i$, without loss of generality we may assume $b_1=-2$. As a consequence of inequality \eqref{Inequality of b}, we have 
	\[b_{11}=-2\ \ \textrm{and}\ \ d'_1=d=3.\] 
	It follows that $-K_S\cdot(C_i-C_i')=d-d'_1=0$. As $C_i-C_i'\geq 0$ and $-K_S$ is ample, we obtain $C'_1=C_1$ and
	\begin{equation}\label{Equality of Degrees}
	-K_S\cdot C_1=E_1\cdot C_1=3.
	\end{equation}	
	Since $C_1$ does not contain $E_j$, we must have $-b_j-1=C_1\cdot E_j\geq 0$ for $j\geq 2$, which yields $b_j\leq -1$ for $j\geq 2$. As a consequence, we get
	\[-12\leq\sum_{j=1}^6 b_j\leq -7\ \ \textrm{and}\ \ 3\leq a\leq 4.\]
	
	Let $C_{1\ell}$ be a component of $C_1$. Since $\Bs\vert-K_S-E_1\vert\subset E_1$ and $C_1$ does not contain $E_1$, we have $(-K_S-E_1)\cdot C_{1\ell}\geq 0$. Then the equality (\ref{Equality of Degrees}) implies $(-K_S-E_1)\cdot C_{1\ell}=0$. Therefore, $C_{1\ell}$ is actually a plane curve and there exists a plane $H_{\ell}\subset\bbP^3$ such that $C_{1\ell}+E_1\leq H_{\ell}\vert_S$. In particular, we have 
	\[-K_S\cdot C_{1\ell}=H_\ell\vert_S\cdot C_{1\ell}\leq 2.\]
	On the other hand, as $-K_S\cdot C_1=3$, it follows easily that there exists at least one component of $C_1$, denoted by $C_{11}$, such that $-K_S\cdot C_{11}=1$. In particular, $C_{11}$ is a line on $S$. However, $C_{11}$ is not $\pi$-exceptional, so the line $C_{11}$ passes at least two $\pi$-exceptional divisors, and it follows that there exists some $j\ (\geq 2)$ such that 
	\[-2\leq b_j=-1-C_1\cdot E_j\leq-1-C_{11}\cdot E_j\leq -2.\]
	Hence we obtain $\sum_{j=1}^6 b_j\leq -8$. 
	
	\textit{Step 3. We exclude the case $c_1(L)\cdot K_S=2$.} By our argument above, if $c_1(L)\cdot K_S=2$, then we have 
	\[a\geq 3,\ \ -2\leq b_j\leq -1\ \ \textrm{and}\ \ -12\leq \sum_{j=1}^6 b_j\leq -8.\] Then the equality $3a+\sum_{j=1}^6 b_j=2$ shows $a=4$ and $\sum_{j=1}^6 b_j=-10$, and consequently $L$ is a line bundle of the form
	\[-4E_0+2E_1+2E_2+2E_3+2E_4+E_5+E_6.\]
	Nevertheless, we have seen that such a line bundle cannot be a saturated subsheaf of $\Omega_{S}^1$ (cf.~Example \ref{Examples}), a contradiction.
\end{proof}

\subsection{Stability of restrictions of tangent bundles of general cubic threefolds} In this subsection, we will prove Theorem \ref{Semi-stability Cubic}. First we consider the saturated subsheaves of $\Omega^1_Y\vert_X$ of rank two and we give an upper bound for the degree of $c_1(\cF)$ with respect to $-K_X$.

\begin{lemma}\label{Rank two subsheaf}
	Let $Y$ be a general smooth cubic threefold and let $X\in \vert\cO_Y(1)\vert$ be a general smooth divisor. If $\cF\subset\Omega^1_Y\vert_X$ is a saturated subsheaf of rank two, then we have 
	\[c_1(\cF)\cdot (-K_X)\leq-5.\]
\end{lemma}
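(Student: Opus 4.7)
The plan is to exploit the conormal sequence
\[ 0 \to \cO_X(-1) \to \Omega^1_Y|_X \to \Omega^1_X \to 0 \]
and analyze $\cF$ according to its intersection with $\cO_X(-1)$. Saturatedness of $\cF$ in $\Omega^1_Y|_X$ forces $\cF\cap\cO_X(-1)$ to be either $\cO_X(-1)$ itself or zero, since any intermediate rank-$1$ subsheaf would give $1$-dimensional torsion in $\Omega^1_Y|_X/\cF$. In the first case, the snake lemma shows $\cF/\cO_X(-1)$ is saturated in $\Omega^1_X$, so by Proposition \ref{Invertible subsheaf} applied to the cubic surface $X$ it satisfies $c_1(\cF/\cO_X(-1))\cdot(-K_X)\le -3$; summing with the contribution from $\cO_X(-1)$ gives $c_1(\cF)\cdot(-K_X)\le -6$, which is stronger than required.

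In the remaining case $\cF\cap\cO_X(-1)=0$, I dualize. Writing $\cM=(\Omega^1_Y|_X/\cF)^{**}$, the line bundle $\cL:=\cM^{-1}$ embeds as a saturated subsheaf of $T_Y|_X$, and $c_1(\cF)=-2H|_X+c_1(\cL)$, so the claim $c_1(\cF)\cdot(-K_X)\le -5$ is equivalent to $c_1(\cL)\cdot(-K_X)\le 1$. I split along the normal bundle sequence $0\to T_X\to T_Y|_X\to\cO_X(1)\to 0$. If $\cL\subset T_X$, then $\cL$ is also saturated in $T_X$, and dualizing the quotient $T_X/\cL$ yields a saturated rank-$1$ subsheaf of $\Omega^1_X$; Proposition \ref{Invertible subsheaf} combined with $K_X^2=3$ then yields the sharper bound $c_1(\cL)\cdot(-K_X)\le 0$. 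Otherwise the composition $\cL\hookrightarrow T_Y|_X\twoheadrightarrow\cO_X(1)$ is a nonzero map of line bundles with effective zero divisor $E'$, so $\cL\cong\cO_X(H|_X-E')$ and the bound reduces to $E'\cdot(-K_X)\ge 2$; it remains to exclude $E'=0$ and $E'$ being a line on $X$.

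For $E'=0$ the embedding $\cO_X(1)\hookrightarrow T_Y|_X$ gives $H^0(X, T_Y(-1)|_X)\ne 0$. Applying Theorem \ref{Globalsections} to the cubic threefold with $d_h=3$, $d=1$, $\rho=5$ (threshold $(\rho+d)/2-d_h=0$), the restriction $H^0(Y,T_Y(-1))\twoheadrightarrow H^0(X,T_Y(-1)|_X)$ is surjective; stability of $T_Y$ (Theorem \ref{stability hypersurfaces of Hermitian symmetric spaces}) then forces the source to vanish, so this subcase is ruled out for general $Y$ and $X$.

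The main obstacle is the subcase in which $E'$ is one of the $27$ lines on $X$ and $\cL\cong\cO_X(C')$ with $C'=H|_X-E'$ the residual conic; equivalently, one must establish $H^0(X,T_Y|_X(-C'))=0$ for general $Y, X$. Twisting the normal bundle sequence gives
\[ 0 \to T_X(-C') \to T_Y|_X(-C') \to \cO_X(E') \to 0, \]
and Fahlaoui's stability of $T_X$ yields $H^0(T_X(-C'))=0$ (since $\mu(T_X(-C'))=-1/2<0$), so $H^0(T_Y|_X(-C'))$ injects into $H^0(\cO_X(E'))\cong\bbC$. The vanishing thus reduces to showing that the coboundary $\delta(s_{E'})=\eta\cdot s_{E'}\in H^1(T_X(-C'))$ is nonzero, where $s_{E'}$ is the defining section of $E'$ and $\eta\in H^1(T_X(-1))\cong\bbC^7$ is the extension class of the conormal sequence. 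I plan to conclude via a genericity argument: for each line $E'\subset X$, the kernel of the cup-product map $(\,\cdot\, s_{E'})\colon H^1(T_X(-1))\to H^1(T_X(-C'))$ is at most $1$-dimensional (as follows from analysing $0\to T_X(-1)\to T_X(-C')\to T_X(-C')|_{E'}\to 0$ together with $T_X|_{E'}=\cO(2)\oplus\cO(-1)$), while the class $\eta$ varies nontrivially as $(Y,X)$ ranges over the $14$-dimensional moduli of pairs and can be arranged to avoid the finite union of these $27$ proper subspaces.
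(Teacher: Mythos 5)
Your strategy (case analysis on $\cF\cap\cO_X(-1)$ via the conormal sequence, then dualizing and splitting the resulting saturated line subsheaf $\cL\subset T_Y\vert_X$ along the normal bundle sequence) is genuinely different from the paper's, which instead passes to $\det\cF$ and the sequence $0\rightarrow\Omega_X^1(-1)\otimes\det(\cF)^*\rightarrow\Omega_Y^2\vert_X\otimes\det(\cF)^*\rightarrow\cO_X(K_X)\otimes\det(\cF)^*\rightarrow 0$. Your route is sound up to and including the exclusion of $E'=0$, and in the two cases handled by Proposition \ref{Invertible subsheaf} it even yields the sharper bound $-6$. The genuine gap is the final subcase, where $E'$ is a line and you must show $H^0(X,T_Y(-C')\vert_X)=0$ for the residual conic $C'=H\vert_X-E'$. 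There you correctly reduce to the nonvanishing of $\eta\cdot s_{E'}$ and correctly bound each kernel by a line in $H^1(T_X(-1))\cong\bbC^7$, but then merely assert that the extension class $\eta$ ``can be arranged'' to avoid the union of the $27$ kernel lines. That assertion is the entire content of the subcase and is not established: $\eta$ depends on the choice of cubic threefold $Y\supset X$, and you would need to analyse the map $Q\mapsto\eta(Q)$ (for $Y=\{G+x_4Q=0\}$, $X=\{G=x_4=0\}$) and show its image is not contained in that union of lines --- a computation nowhere carried out. As written the proof is incomplete at its hardest point.

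The gap is, however, unnecessary. Since $C'$ is effective, multiplication by its defining section embeds $T_Y(-C')\vert_X$ into $T_Y\vert_X$, so $H^0(X,T_Y(-C')\vert_X)\subset H^0(X,T_Y\vert_X)$. Theorem \ref{Globalsections} applies with $t=0$ (here $(\rho+d)/2-d_h=(5+1)/2-3=0$) and gives a surjection $H^0(Y,T_Y)\rightarrow H^0(X,T_Y\vert_X)$; since a smooth cubic threefold carries no global vector fields, $H^0(X,T_Y\vert_X)=0$. This single observation disposes of $E'=0$ and $E'$ a line simultaneously, and it is precisely the mechanism the paper uses (applied there to the quotient line bundles $\cO_X(-1)$ and $\cO_X(-1)\otimes\cO_X(T)$ arising from the two possible values of $\det\cF$). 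With this substitution your argument closes and constitutes a valid alternative proof.
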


\begin{proof}
	The natural inclusion $\cF\subset\Omega_Y^1\vert_X$ implies $h^0(X,\Omega_Y^2\vert_X\otimes\det(\cF)^*)\geq 1$. Using the short exact sequence
	\[0\rightarrow \Omega_X^1(-1)\otimes \det(\cF)^{*}\rightarrow\Omega_Y^2\vert_X\otimes\det(\cF)^{*}\rightarrow\cO_X(K_X)\otimes\det(\cF)^{*}\rightarrow 0,\]
	we obtain either $h^0(X,\Omega_X^1(-1)\otimes\det(\cF)^{*})\geq 1$ or $h^0(X,\cO_X(K_X)\otimes \det(\cF)^{*})\geq 1$. In the former case, the stability of $\Omega_X^1$ implies
	\[(c_1(\cF)+c_1(\cO_X(1)))\cdot (-K_X)<\frac{K_X\cdot(-K_X)}{2}=-\frac{3}{2}.\]
	This yields
	\[c_1(\cF)\cdot(-K_X)<-c_1(\cO_X(1))\cdot (-K_X)-\frac{3}{2}=-\frac{9}{2}<-4.\]
	In the latter case, we have $c_1(\cF)\cdot (-K_X)\leq K_X\cdot (-K_X)=-3$ with equality if and only if $c_1(\cF)=-K_X$, and the quotient $\cG\colon=\left(\left.\Omega_Y^1\right\vert_X\right)/\cF$ is a torsion-free sheaf of rank one. 
	
	If $c_1(\cF)\cdot(-K_X)=-3$, then $\det(\cF)\cong\cO_X(K_X)\cong\cO_X(-1)$ and consequently $\det(\cG)=\cO_X(-1)$. Since $\cG^{*}$ is a subsheaf of $T_Y\vert_X$, we obtain 
	\[h^0(X,T_Y\vert_X\otimes \det(\cG))=h^0(X,T_Y(-1)\vert_X)\geq 1.\]
	Since $T_Y(-1)\vert_X$ is a subsheaf of $T_Y\vert_X$, we get $H^0(X,T_Y\vert_X)\not=0$. Then, by Theorem \ref{Globalsections}, we obtain $H^0(Y,T_Y)\not=0$. Nevertheless, it is well-known that there are no global holomorphic vector fields over a cubic threefold (cf.~\cite[Theorem 11.5.2]{KatzSarnak1999}). This leads to a contradiction.
	
	If $c_1(\cF)\cdot(-K_X)=-4$, then $\det(\cF)\cong\cO_X(-1)\otimes\cO_X(-T)$ for some line $T\subset X$. As a consequence, we have $\det(\cG)=\cO_X(-1)\otimes\cO_X(T)$. Since $\cG^{*}$ is a subsheaf of $T_Y\vert_X$, we get 
	\[h^0(X,T_Y(-C)\vert_X)>0,\]
	where $C$ is a conic such that $\cO_X(C)\cong \cO_X(1)\otimes\cO_X(-T)$. Note that the sheaf $T_Y(-C)\vert_X$ is a subsheaf of $T_Y\vert_X$, it follows that $H^0(X,T_Y\vert_X)\not=0$. Similarly, Theorem \ref{Globalsections} implies $H^0(Y,T_Y)\not=0$, which is impossible.
\end{proof}

Now we are in the position to prove the main theorem in this section.

\begin{proof}[Proof of Theorem \ref{Semi-stability Cubic}]
	It is enough to prove that $\Omega_Y^1\vert_X$ is stable with respect to $\cO_X(1)$. Since $\mu(\Omega_Y^1\vert_X)=-2$, it suffices to prove that the following inequality holds for any proper saturated subsheaf $\cF$ of $\Omega_Y^1\vert_X$.
	\[\mu(\cF)=\frac{c_1(\cF)\cdot -K_X}{\rk(\cF)}< -2\]

	\textit{1st Case. $\cF\subset \Omega_Y^1\vert_X$ is a saturated subsheaf of rank one.} Since $\cF$ is a reflexive sheaf of rank one and $X$ is smooth, $\cF$ is actually an invertible sheaf. Then the exact sequence 
	\[0\rightarrow\cO_X(-1)\otimes \cF^{*}\rightarrow \Omega_Y^1\vert_X\otimes \cF^{*}\rightarrow \Omega_X^1\otimes \cF^{*}\rightarrow 0\]
	implies that we have either $h^0(X,\cO_X(-1)\otimes \cF^{*})\geq 1$ or $h^0(X,\Omega^1_X\otimes \cF^{*})\geq 1$. In the former case, we have 
	\[\mu(\cF)=c_1(\cF)\cdot (-K_X)\leq c_1(\cO_X(-1))\cdot (-K_X)=-3<-2.\]
	In the latter case, let $\overline\cF$ be the saturation of $\cF$ in $\Omega_X^1$, then Proposition~\ref{Invertible subsheaf} implies 
	\[\mu(\cF)\leq \mu(\overline\cF)=c_1(\overline\cF)\cdot (-K_X)\leq-3.\]
	
	\textit{2nd Case. $\cF\subset\Omega_Y^1\vert_X$ is a saturated subsheaf of rank two.} In this case, by Lemma \ref{Rank two subsheaf}, we have
	\[\mu(\cF)=\frac{c_1(\cF)\cdot(-K_X)}{2}\leq \frac{-5}{2}<-2.\]
	This finishes the proof.
\end{proof}

\def\cprime{$'$} 

\renewcommand\refname{Reference}
\bibliographystyle{alpha}
\bibliography{stability}

\end{document}